\documentclass[a4paper,11pt]{article}
\usepackage{amsmath}
\usepackage{amsfonts}
\usepackage{titlesec}
\usepackage{amssymb}
\usepackage{amsthm}
\usepackage{enumitem}
\usepackage{mathtools}
\usepackage{listings}
\usepackage{geometry}
\geometry{verbose,a4paper,tmargin=35mm,bmargin=25mm,lmargin=28mm,rmargin=28mm}

\usepackage{color}

\definecolor{mycolor1}{rgb}{0.00000,0.44700,0.74100}%

\usepackage[utf8]{inputenc}

\newcommand{\quotes}[1]{``#1''}

\usepackage{array}
\makeatletter
\newcommand{\thickhline}{%
    \noalign {\ifnum 0=`}\fi \hrule height 1pt
    \futurelet \reserved@a \@xhline
}
\newcolumntype{"}{@{\hskip\tabcolsep\vrule width 1pt\hskip\tabcolsep}}
\makeatother


\usepackage{framed}
\usepackage{graphicx, caption, subcaption}
\usepackage{hyperref}
\usepackage{dsfont}
\usepackage{float}

\usepackage{tikz,pgfplots}

\newtheorem{theorem}{Theorem}[section]

\newtheorem{proposition}{Proposition}[section]
\newtheorem{lemma}[theorem]{Lemma}
\newtheorem{remark}[theorem]{Remark}
\newtheorem{definition}[theorem]{Definition}
\newtheorem{conclusion}[theorem]{Conclusion}

\newcommand{\dx}{\hspace{2pt} \mbox{d}x}
\newcommand{\eps}{\varepsilon}

\newcommand{\R}{\mathbb{R}}

\usepackage{diagbox}

\captionsetup{font=footnotesize}

\renewcommand{\thefootnote}{\fnsymbol{footnote}}
\renewcommand{\thefootnote}{\arabic{footnote}}

\begin{document}

\begin{center}
{\LARGE The dependency of spectral gaps on the convergence of the inverse iteration for a nonlinear eigenvector problem
\renewcommand{\thefootnote}{\fnsymbol{footnote}}\setcounter{footnote}{0}
 \hspace{-3pt}\footnote{The author acknowledges the support by the G\"oran Gustafsson foundation and the German Research Foundation (DFG grant HE 2464/7-1).}}\\[2em]
\end{center}

\begin{center}
{\large Patrick Henning\footnote[1]{Department of Mathematics, Ruhr-University Bochum, DE-44801 Bochum, Germany
}}\\[2em]
\end{center}

\begin{center}
{\large{\today}}
\end{center}

\begin{center}
\end{center}

\begin{abstract}
In this paper we consider the generalized inverse iteration for computing ground states of the Gross--Pitaevskii eigenvector problem (GPE). For that we prove explicit linear convergence rates that depend on the maximum eigenvalue in magnitude of a weighted linear eigenvalue problem. Furthermore, we show that this eigenvalue can be bounded by the first spectral gap of a linearized Gross-Pitaevskii operator, recovering the same rates as for linear eigenvector problems. With this we establish the first local convergence result for the basic inverse iteration for the GPE without damping. We also show how our findings directly generalize to extended inverse iterations, such as the Gradient Flow Discrete Normalized (GFDN) proposed in [{\it W. Bao, Q. Du, SIAM J. Sci. Comput., 25 (2004)}] or the damped inverse iteration suggested in  [{\it P. Henning, D. Peterseim, SIAM J. Numer. Anal., 53 (2020)}]. Our analysis also reveals why the inverse iteration for the GPE does not react favourably to spectral shifts. This empirical observation can now be explained with a blow-up of a weighting function that crucially contributes to the convergence rates. Our findings are illustrated by numerical experiments.
\end{abstract}

\section{Introduction}
When a dilute gas of bosons is cooled down to ultra-low temperatures approaching $0K$, a so-called Bose--Einstein condensate (BEC) is formed \cite{PiS03}. Such condensates are an extraordinary state of matter, where a collective of particles behaves as if they were one single \quotes{super atom} that becomes macroscopically visible and which allows to study quantum phenomena on an observable scale. Of particular interest are the ground sates of a Bose--Einstein condensate, i.e., the lowest energy states of a BEC that is trapped in a magnetic potential. The most popular mathematical model for describing such ground states of BECs is given by the Gross--Pitaevskii eigenvector problem (GPE). The equation is named after E. Gross and L. Pitaevskii who first derived it \cite{Gro61,Pit61}. The GPE seeks a normalized eigenfunction $u$ for the smallest eigenvalue $\lambda\in \R$ such that
\begin{align}
\label{initial-GPE}
- \tfrac{1}{2} \Delta u + V\, u + \beta |u|^2 u = \lambda \, u 
\end{align}
in some domain $\Omega \subset \mathbb{R}^d$ in either $1d$, $2d$ or $3d$. Here, the eigenfunction $u$ describes the quantum state of the BEC, $|u|^2$ is its density and the eigenvalue $\lambda$ can be interpreted as the chemical potential. The trapping potential is model by the function $V(x)$ and the constant $\beta \in \mathbb{R}$ describes the strength and the direction of particle interactions. In this paper we will only consider the regime of repulsive interactions, which means that $\beta\ge0$. It is worth to note that in the real-valued setting of this work, it does not matter if we write $|u|^2u$ or $u^3$ in equation \eqref{initial-GPE}. However, in more general complex valued settings (e.g. if there is a time-dependency) this is no longer the case. For a consistent notation throughout the mathematical and physical literature, the GPE is hence always written with $|u|^2u$ in the nonlinear term as this expression still remains correct if $u$ is complex-valued.

The mathematical literature on the GPE and its numerical treatment is vast and we refer to the survey articles by Bao et al. \cite{Bao14,BaC13b} for an illustrating introduction to the topic and for a detailed literature overview. 

In the following discussion we will mainly focus on iterative methods for solving the eigenvalue problem \eqref{initial-GPE} in the Hilbert space $H^1_0(\Omega)$. Practically however, a numerical method also requires a suitable space discretization of finite element-, finite difference-, or spectral-type. The approximation properties of such discrete ground states with respect to an exact ground state were analyzed in detail in \cite{CCM10}. Furthermore, the usage of generalized finite element spaces with significantly improved approximation properties was proposed in \cite{HMP14b,HeP21}. Other techniques that improve the quality of discrete minimizers are adaptivity \cite{HSW21,XiX19} and two-grid post-processing \cite{CCH18,HMP14b}. 

With this remark, we return to the discussion of iterative solvers and we neglect from now on the aspect of the space discretization. Broadly speaking, there are three major classes of iterative methods to approach the GPE \eqref{initial-GPE}: self-consistent field iterations (SCF), generalized inverse iterations (\quotes{A-methods}) and inverse iterations based on a modified scaling-invariant operator (\quotes{J-methods}). The self-consistent field iterations are based on linearizing the GPE with a given approximation $u^n$ and to afterwards solve the arising {\it linear} eigenvalue problem. The result from the linear eigenvalue problem is then used as an updated approximation $u^{n+1}$ and the procedure is repeated. Methods that fall into this class are e.g. discussed and analyzed in \cite{CKL21,CaL00,CaL00B,DiC07,UJR21}. 
The second class, i.e., generalized inverse iterations, are also based on linearizing the GPE, however instead of solving an eigenvalue problem, the inverse of the linearized operator is simply applied to a previous approximation and the result is normalized afterwards. Such inverse iterations can be interpreted and derived in the context of projected Sobolev-gradient flows \cite{BaD04,BWM05,DaK10,HeP20,KaE10,LiC21,Zha21}, Riemannian optimization \cite{APS21,DaP17} or preconditioned (conjugate) gradient methods \cite{ALT17,AnD14}. An approach that does not fit in one of the aforementioned classes is known as J-method and is based on modifying the Gross--Pitaevskii operator so that it becomes scaling-invariant. The derivative of this modified operator (the so called J-operator) is then used in an inverse iteration with shift to accelerate the convergence. The method was first proposed in \cite{JarKM14} and further analyzed in \cite{AHP21NumMath}.

As indicated by the discussion, the generalized inverse iterations (A-methods) constitute by far the largest class of approaches for the GPE. Despite its popularity and long history to solve the equation, there are still several fundamental questions regarding its convergence that are open. The first proof of global convergence for an inverse iteration with adaptive damping was presented in \cite{HeP20}, however without quantifying the speed of convergence. An improvement was recently obtained by Zhang \cite{Zha21} who proved that the convergence must be linear for all sufficiently small damping parameters, i.e., there exists a contraction rate $r<1$, such that $\| u^{n} - u \|_{H^1(\Omega)} \le C \, r^n \, \| u^{0} - u \|_{H^1(\Omega)}$, where $u$ is the exact ground state and $u^n$ are the approximations obtained from the inverse iterations with damping for starting value $u^0$. Shortly after, the techniques from \cite{HeP20,Zha21} were further generalized to other equations, such as the Kohn--Sham model \cite{APS21}. However, two central questions that remained open are:
\begin{itemize}
\item Can we still guarantee convergence when the damping parameter is removed (i.e. for the basic inverse iteration)? 
\item Even more importantly, can we quantify the linear convergence rate $0<r<1$ depending on spectral gaps, analogously to the linear case?
\end{itemize} 
In this paper we will answer both questions positively and we will transfer our results to damped versions of the inverse iteration method, such as the GFDN (Gradient Flow Discrete Normalized) by Bao and Du  \cite{BaD04} and the discrete Sobolev gradient flow proposed in \cite{HeP20}. With this, our findings also complement the local convergence result for the GFDN obtained by Faou and J\'{e}z\'{e}quel \cite{FaJ18} in $1d$ for focussing nonlinearities (i.e. in the regime $\beta <0$).
Furthermore, our analysis reveals why introducing a shift $\sigma$ in the direction of the ground state eigenvalue $\lambda$ will typically not improve the convergence, but might in fact ruin it. This numerically observed phenomenon (cf. \cite{AHP21NumMath,JarKM14}) is related to a pollution factor of order $\beta|\lambda-\sigma|^{-1}$ that influences the convergence rates.\\[0.3em]
{\bf Outline.} The paper is structured as follows. In Section \ref{section-setting} we give a precise mathematical description of the Gross--Pitaevskii equation and we recall important results concerning the existence and uniqueness of ground states, as well as equivalent characterizations. In Section \ref{section-basic-inverse-iteration} we formulate the basic version of the inverse iteration for the GPE and we present and prove our first main result concerning explicit asymptotic convergence rates. A generalization of our findings to the GFDN method are given in Section \ref{section-GFDN}. Based on these findings, we investigate a hypothetical inverse iteration with shift in Section \ref{section-inverse-iteration-with-shift}, where we can give mathematical arguments why a good performance of such a method cannot be expected in general. The inverse iteration with a damping parameter is analyzed in Section \ref{section-inverse-iteration-with-damping}. Finally, we conclude with numerical experiments in Section \ref{section-numerical-experiments}.

\section{Analytical setting and preliminaries}
\label{section-setting}

In the following, we let 
\begin{enumerate}[label={(A\arabic*)}]
\item \label{A1}$\Omega \in \mathbb{R}^d$ be a bounded and convex domain in dimension $d=1,2,3$.
\end{enumerate}
On $\Omega$, we use standard notation for Lebesgue and Sobolev spaces. As a simplifying notation, the $L^2$-norm shall be denoted by $\| \cdot \| := \| \cdot \|_{L^2(\Omega)}$. Furthermore, the Sobolev space of $L^2$-integrable and weakly-differentiable functions with a vanishing trace on the boundary $\partial \Omega$ is as usual denoted by $H^1_0(\Omega)$. 
The dual space of $H^1_0(\Omega)$ is given by $H^{-1}(\Omega)$.\\[0.1em]
Given 
\begin{enumerate}[resume,label={(A\arabic*)}]
\item\label{A2} a potential $V\in L^{\infty}(\Omega)$ with $V(x)\ge 0$ for almost all $x\in \Omega$ , and
\item\label{A3} a real-valued repulsion constant $\beta\ge 0$,
\end{enumerate}
we define the Gross--Pitaevskii energy functional $E: H^1_0(\Omega) \to \R$ by
\begin{align}
\label{definition-energy-functional}
E(v) := \frac{1}{2}\int_{\Omega} \tfrac{1}{2}|\nabla v|^2 + V |v|^2 + \tfrac{\beta}{2} |v|^4 \dx.
\end{align}
With this, we seek a minimizer $u$ of $E$ under the mass-normalization constraint that $\| u\| =1$ (which represents the conservation of a certain particle number). Such a minimizer is called a {\it ground state} and describes, for example, the lowest energy states of Bose--Einstein condensates. Since the functional $E$ is weakly lower semi-continuous on $H^1_0(\Omega)$ and bounded from below by zero, the existence of a minimizer is apparent. Similarly, with the diamagnetic inequality $|\nabla|v|| \le |\nabla v|$ for all $v\in H^1_0(\Omega)$ it follows that if $u$ is a ground state, then $|u| \in H^1_0(\Omega)$ is also a ground state. With this insight, it is possible to prove that the minimizer $u$ must be either strictly positive or strictly negative in $\Omega$ and that it is  unique up to its sign, cf. \cite[Appendix]{CCM10}. From the latter reference, the following result can be extracted:
\begin{proposition}[Minimizers of the Gross--Pitaevskii energy]
\label{proposition-minimization-problem}
Assume \ref{A1}-\ref{A3}, then there exist exactly two $L^2$-normalized minimizers $u$ of $E$ with
\begin{align}
\label{def-ground-state}
u = \mbox{\rm arg\hspace{2pt}min} \{ E(v) | \hspace{2pt} v\in H^1_0(\Omega), \, \| v\| =1 \},
\end{align} 
which are $|u|$ and $-|u|$. Furthermore, it holds $u \in H^2(\Omega) \cap C^{0,\alpha}(\overline{\Omega})$ for some $0<\alpha < 1$.
\end{proposition}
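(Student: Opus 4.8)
The plan is to establish the four claims in Proposition~\ref{proposition-minimization-problem} in sequence, most of which I would harvest from the existence/uniqueness theory that is already sketched in the paragraph preceding the statement. First I would verify that $E$ attains its minimum on the constraint set $M := \{ v \in H^1_0(\Omega) : \|v\| = 1 \}$: since $V \ge 0$ and $\beta \ge 0$, we have $E(v) \ge \tfrac14 \|\nabla v\|^2 \ge 0$, so $E$ is bounded below; a minimizing sequence $(v_k) \subset M$ is bounded in $H^1_0(\Omega)$, hence (up to a subsequence) converges weakly in $H^1_0(\Omega)$ and strongly in $L^2(\Omega)$ (Rellich, using \ref{A1}) and in $L^4(\Omega)$ (Sobolev embedding in $d \le 3$, again using boundedness of $\Omega$) to some $u$; the strong $L^2$-convergence gives $\|u\| = 1$ so $u \in M$, and weak lower semicontinuity of $v \mapsto \|\nabla v\|^2$ together with strong convergence of the lower-order terms yields $E(u) \le \liminf_k E(v_k)$, so $u$ is a minimizer.

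Next I would address the sign and the count. The diamagnetic inequality $|\nabla |v|| \le |\nabla v|$ a.e.\ gives $E(|v|) \le E(v)$ while $\||v|\| = \|v\|$, so if $u$ minimizes then so does $|u| \ge 0$. The Euler--Lagrange equation for a minimizer $w \ge 0$ is \eqref{initial-GPE} with $\lambda$ the Lagrange multiplier, i.e.\ $-\tfrac12 \Delta w + (V + \beta w^2) w = \lambda w$; since $V + \beta w^2 \in L^\infty(\Omega)$ (this requires $w \in L^\infty$, which will follow from the regularity step below, so I would order the regularity argument before this one or argue iteratively), the strong maximum principle / Harnack inequality applies to the Schrödinger-type operator $-\tfrac12\Delta + (V + \beta w^2 - \lambda)$ and forces $w > 0$ in $\Omega$. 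Strict positivity is what gives uniqueness: if $u_1, u_2 \ge 0$ are two minimizers, the standard convexity argument of Lieb--Loss type on the functional written in terms of the density $\rho = w^2$ (the map $\rho \mapsto \int \tfrac12 |\nabla \sqrt{\rho}|^2 + V\rho + \tfrac\beta2 \rho^2$ is strictly convex in $\rho$ on its domain when $\beta \ge 0$) shows $\sqrt{(u_1^2+u_2^2)/2}$ would have strictly lower energy unless $u_1 \equiv u_2$. Hence the positive minimizer is unique, call it $|u|$, and every minimizer is $\pm|u|$ by the diamagnetic step. I expect the cleanest route is simply to cite \cite[Appendix]{CCM10} for this block, as the excerpt already signals.

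For the regularity $u \in H^2(\Omega) \cap C^{0,\alpha}(\overline\Omega)$, I would bootstrap from the Euler--Lagrange equation $-\tfrac12\Delta u = (\lambda - V)u - \beta u^3 =: f$. Starting from $u \in H^1_0(\Omega)$, in $d \le 3$ the embedding $H^1 \hookrightarrow L^6$ gives $u^3 \in L^2(\Omega)$ and $(\lambda - V)u \in L^2(\Omega)$ (using $V \in L^\infty$), so $f \in L^2(\Omega)$; elliptic regularity on the bounded \emph{convex} domain $\Omega$ (convexity is exactly what buys $H^2$-regularity for the Dirichlet Laplacian up to the boundary without smoothness of $\partial\Omega$) yields $u \in H^2(\Omega)$. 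Then $H^2(\Omega) \hookrightarrow C^{0,\alpha}(\overline\Omega)$ for some $\alpha \in (0,1)$ in dimensions $d \le 3$ (Sobolev), giving the Hölder continuity and in particular $u \in L^\infty$, which retroactively justifies the maximum-principle step above. The main obstacle, and the only place the hypotheses are used in a nontrivial way, is the uniqueness-up-to-sign claim: the minimizer need not be unique for general functionals, and it is precisely the sign-definiteness (from the diamagnetic inequality) combined with the convexity of the energy as a function of the density (from $\beta \ge 0$) that rescues it; everything else is a routine compactness/elliptic-regularity exercise. Since \cite{CCM10} carries out exactly this, I would present the proof as a short verification of the hypotheses needed to invoke that reference, spelling out only the $H^2$-bootstrap to make the paper self-contained.
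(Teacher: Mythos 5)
Your proposal is correct and follows essentially the same route as the paper, which does not prove the proposition itself but extracts it from \cite[Appendix]{CCM10} after sketching exactly the ingredients you list: weak lower semicontinuity and boundedness from below for existence, the diamagnetic inequality to reduce to nonnegative minimizers, strict positivity and uniqueness up to sign via the cited appendix, and elliptic regularity on the convex domain for $u\in H^2(\Omega)\cap C^{0,\alpha}(\overline{\Omega})$. The only cosmetic caveat is that for $\beta=0$ the map $\rho\mapsto\int\tfrac12|\nabla\sqrt{\rho}|^2+V\rho+\tfrac{\beta}{2}\rho^2\,\mathrm{d}x$ is not strictly convex outright and uniqueness instead requires the equality case of the kinetic-energy convexity (giving $u_1=cu_2$, then $c=1$ by normalization), but this is handled by the reference you, like the paper, invoke.
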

Without loss of generality, we consider from now on only the (unique) positive ground state $u$.

Any minimizer of the constrained minimization problem \eqref{def-ground-state} can be equivalently expressed by the Euler-Lagrange equations seeking $u \in H^1_0(\Omega)$ and minimal $\lambda>0$ such that
\begin{align}
\label{GPEV-preliminary}
\langle E'(u) , v \rangle = \lambda \, ( u , v)_{L^2(\Omega)} \qquad \mbox{for all } v\in H^1_0(\Omega).
\end{align}
Here $E^{\prime}$ denotes the Fr\'echet-derivative of $E$, $\langle \cdot ,\cdot \rangle$ denotes the canonical duality pairing on $H^1_0(\Omega)$ and $H^{-1}(\Omega)$ and $\lambda$ is the Lagrange multiplier of the constraint $\| u \| =1$. The Fr\'echet derivative can be be computed as
\begin{align}
\label{Eprime}
\langle E'(v) , w \rangle = \tfrac{1}{2}(\nabla v ,\nabla w)_{L^2(\Omega)} + (V\,  v , w)_{L^2(\Omega)} 
+ \beta (|v|^2 v ,w )_{L^2(\Omega)}. 
\end{align}
Hence, problem \eqref{GPEV-preliminary} can be seen as an eigenvalue problem with eigenfunction nonlinearity, which will be also the viewpoint that we take in this paper. Also note that it is not obvious that minimizing the eigenvalue $\lambda$ leads to a ground state in the sense of  \eqref{def-ground-state}. A corresponding proof of this observation can be found in \cite{CCM10}. In particular, we have the following proposition.
\begin{proposition}[Gross--Pitaevskii eigenvector problem]
\label{proposition-ev-problem}
Assume \ref{A1}-\ref{A3}, then the unique positive ground state $u$ to \eqref{def-ground-state} can be equivalently expressed as seeking the smallest eigenvalue $\lambda>0$ and the corresponding eigenfunction $u\in H^1_0(\Omega)$ with $\| u \|=1$ and $u>0$ in $\Omega$ such that
\begin{align}
\label{groundstate-ev-problem}
a_u(u , v ) =  \lambda \, ( u , v)_{L^2(\Omega)} \qquad \mbox{for all } v\in H^1_0(\Omega).
\end{align}
For fixed $z\in H^1_0(\Omega)$, the coercive and continuous bilinear form $a_z(\cdot,\cdot) : H^1_0(\Omega) \times H^1_0(\Omega) \rightarrow \R$ is defined by
\begin{align*}
a_z(w,v):= \tfrac{1}{2}(\nabla w ,\nabla v)_{L^2(\Omega)} + (V\,  w , v)_{L^2(\Omega)}  + \beta (|z|^2 w ,v)_{L^2(\Omega)}.
\end{align*}
The eigenvalue $\lambda$ is simple.
\end{proposition}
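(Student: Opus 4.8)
The plan is to assemble the claim from the Euler--Lagrange characterisation of constrained minimisers, the results already quoted from \cite{CCM10}, and a short spectral argument for simplicity. One implication is routine: if $u$ is the positive ground state of Proposition~\ref{proposition-minimization-problem}, then $u$ satisfies the Euler--Lagrange equation \eqref{GPEV-preliminary}, and comparing the explicit formula \eqref{Eprime} for $E'$ with the definition of $a_z$ shows $\langle E'(u),v\rangle = a_u(u,v)$ for every $v\in H^1_0(\Omega)$, so that $a_u(u,v)=\lambda\,(u,v)_{L^2(\Omega)}$ holds as in \eqref{groundstate-ev-problem}. Choosing $v=u$ and using the coercivity of $a_u$ together with $\|u\|=1$ gives $\lambda = a_u(u,u) \ge \tfrac12\|\nabla u\|^2 > 0$, while $u>0$ in $\Omega$ and the regularity $u\in H^2(\Omega)\cap C^{0,\alpha}(\overline\Omega)$ are part of Proposition~\ref{proposition-minimization-problem}. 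The converse implication --- that the positive normalised solution of \eqref{groundstate-ev-problem} carrying the smallest eigenvalue is a minimiser of \eqref{def-ground-state} --- is the non-trivial part; I would cite \cite[Appendix]{CCM10} for it, recalling only the underlying mechanism: after passing to the density variable $\rho=v^2$ the energy $E$ becomes strictly convex on the convex set $\{\rho\ge 0:\ \int_\Omega\rho=1\}$ (the term $\int_\Omega|\nabla v|^2 = \int_\Omega |\nabla\rho|^2/(4\rho)$ is jointly convex in $(\rho,\nabla\rho)$, the potential term is linear, and $\int_\Omega|v|^4 = \int_\Omega\rho^2$ is convex), so that the constrained minimiser is unique up to sign and coincides with the unique positive solution of the eigenvalue problem.

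It then remains to prove that $\lambda$ is simple, and here I would freeze the ground state $u$ and view $a_u(\cdot,\cdot)$ as a \emph{linear}, symmetric, coercive and continuous bilinear form --- the form of the Schr\"odinger-type operator $A_u=-\tfrac12\Delta+V+\beta|u|^2$, whose zeroth-order coefficient lies in $L^\infty(\Omega)$ because $u\in C^{0,\alpha}(\overline\Omega)$. Since $\Omega$ is bounded, the embedding $H^1_0(\Omega)\hookrightarrow L^2(\Omega)$ is compact, so $A_u$ has compact resolvent and a discrete spectrum $\mu_1\le\mu_2\le\cdots\to\infty$ with $L^2$-orthonormal eigenfunctions. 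By the min--max principle $\mu_1=\inf\{a_u(w,w):\|w\|=1\}$; the diamagnetic inequality shows that with any minimiser $w$ also $|w|$ is one, hence there is a non-negative first eigenfunction, which the strong maximum principle (applicable since $\Omega$ is convex, hence Lipschitz, and $V+\beta|u|^2\ge 0$ is bounded) forces to be strictly positive in $\Omega$. If $\mu_1$ admitted two linearly independent eigenfunctions, a suitable combination would vanish at an interior point yet still be a first eigenfunction, so its modulus would be a non-negative first eigenfunction vanishing somewhere in $\Omega$ --- contradicting the strong maximum principle. Thus $\mu_1$ is simple with a strictly positive eigenfunction. Finally, $u$ is itself an eigenfunction of $A_u$ for $\lambda$ and $u>0$; since any eigenfunction for an eigenvalue $\mu_j\neq\mu_1$ is $L^2$-orthogonal to the strictly positive first eigenfunction and therefore must change sign, we conclude $\lambda=\mu_1$, which is simple.

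The routine ingredients are the Euler--Lagrange computation and the standard spectral theory of $A_u$. The genuinely hard implication, which I would not reprove, is that the ground state of \eqref{def-ground-state} is exactly the solution of \eqref{groundstate-ev-problem} with minimal eigenvalue; I rely on \cite{CCM10} here. Within the simplicity argument the only point that needs a little care is the appeal to the strong maximum principle for $A_u-\mu_1$, which is legitimate because the potential $V+\beta|u|^2$ is bounded and non-negative and $\Omega$ is a bounded convex (hence Lipschitz) domain.
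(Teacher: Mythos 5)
Your proposal is correct and follows essentially the same route as the paper, which likewise states this proposition as an extract from \cite{CCM10} (the Euler--Lagrange identification of $a_u(u,\cdot)$ with $E'(u)$ is carried out around \eqref{GPEV-preliminary}--\eqref{Eprime}, and the non-trivial equivalence with the minimization problem is delegated to \cite[Appendix]{CCM10}). Your simplicity argument --- positivity of the first eigenfunction of the frozen linear operator plus $L^2$-orthogonality forcing higher eigenfunctions to change sign --- is exactly the reasoning the paper gives in the discussion preceding Proposition \ref{proposition-linearized-problem}.
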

Propositions \ref{proposition-minimization-problem} and \ref{proposition-ev-problem} give a comprehensive characterisation of ground states. Before we proceed with how to compute ground state approximations with a generalized inverse iteration, we require a linear auxiliary problem that is crucial for specifying explicit convergence rates. For that we seek eigenvalues $\lambda_i \in \R$ and corresponding eigenfunctions $u_i \in H^1_0(\Omega)$ with $\| u_i \|=1$ such that
\begin{align}
\label{linearized-ev-problem}
a_u ( u_i , v ) = \lambda_i \, ( u_i , v )_{L^2(\Omega)} \qquad \mbox{for all } v \in H^1_0(\Omega). 
\end{align}
Here, $u\in H^1_0(\Omega)$ denotes the unique positive ground state given by \eqref{def-ground-state}. Since $a_u(\cdot,\cdot)$ is a coercive, continuous and symmetric bilinear form, standard results of spectral theory guarantee that there exists a countably infinite number of eigenvalues
$$
0< \lambda_1 < \lambda_2 \le \lambda_3 \le ... \rightarrow \infty.
$$ 
The positivity follows from the coercivity of $a_{u}(\cdot,\cdot)$ and the observation that $\lambda_1$ is simple follows again from the fact that $u_1$ cannot be sign changing. As $u$ is obviously an eigenfunction of \eqref{linearized-ev-problem} that is strictly positive and since there cannot be a second eigenfunction with that property (due to $L^2$-orthogonality of eigenfunctions), we conclude that $u_1=u$ (up to sign) and $\lambda=\lambda_1$. For more details on the argument we refer again to \cite{CCM10}. The findings are summarized in the following proposition which finishes our preparations on the analytical background. 
\begin{proposition}
\label{proposition-linearized-problem}
Assume \ref{A1}-\ref{A3} and consider the {\rm linearized} eigenvalue problem \eqref{linearized-ev-problem} with smallest eigenvalue $\lambda_1$ (which is simple) and the second smallest eigenvalue $\lambda_2$. 
If $u$ denotes the ground state eigenfunction to the nonlinear problem \eqref{groundstate-ev-problem} (respectively \eqref{def-ground-state}) with ground state eigenvalue $\lambda$, then we have
$$
\lambda_1=\lambda, \qquad |u_1|=|u| \qquad  \mbox{and} \qquad \tfrac{\lambda}{\lambda_2} < 1.
$$
\end{proposition}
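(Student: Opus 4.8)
The plan is to derive the three assertions essentially from the spectral theory of the symmetric, coercive and continuous bilinear form $a_u(\cdot,\cdot)$, the nonlinearity of the Gross--Pitaevskii problem entering only through the observation that the ground state $u$ itself solves the \emph{linear} eigenvalue problem \eqref{linearized-ev-problem} with eigenvalue $\lambda$ (this is precisely \eqref{groundstate-ev-problem}). First I would invoke the Rellich--Kondrachov embedding to conclude that the solution operator associated with $a_u(\cdot,\cdot)$ is compact and self-adjoint on $L^2(\Omega)$, so that \eqref{linearized-ev-problem} admits a non-decreasing sequence of positive eigenvalues $\lambda_1 \le \lambda_2 \le \dots \to \infty$ with an $L^2(\Omega)$-orthonormal system of eigenfunctions, and that $\lambda_1 = \min\{ a_u(v,v) \, : \, v \in H^1_0(\Omega), \ \|v\| = 1 \}$, the minimum being attained exactly on the first eigenspace.

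The only point that is not completely routine is the simplicity of $\lambda_1$, and here I would argue as in \cite{CCM10}. By the diamagnetic inequality $|\nabla|v|| \le |\nabla v|$ and the non-negativity of the zeroth-order coefficient $V + \beta|u|^2$ of $a_u(\cdot,\cdot)$, one has $a_u(|v|,|v|) \le a_u(v,v)$ together with $\||v|\| = \|v\|$, so that $|w|$ minimizes the Rayleigh quotient whenever $w$ does; hence for every first eigenfunction $w$, the function $|w|$ is again a first eigenfunction. Elliptic regularity on the convex domain $\Omega$ gives $|w| \in H^2(\Omega)$, and since $|w| \ge 0$ solves $-\tfrac12 \Delta |w| + (V + \beta|u|^2)|w| = \lambda_1 |w| \ge 0$ with bounded, non-negative zeroth-order coefficient, the strong maximum principle forces $|w| > 0$ throughout the connected set $\Omega$. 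Thus every first eigenfunction is strictly sign-definite. Were $\lambda_1$ of multiplicity at least two, we could choose two $L^2(\Omega)$-orthogonal first eigenfunctions; after fixing signs both would be strictly positive, which is incompatible with their $L^2$-orthogonality. Hence $\lambda_1$ is simple and, in particular, $\lambda_1 < \lambda_2$.

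It then remains to identify $u$ with the first eigenfunction $u_1$. By \eqref{groundstate-ev-problem}, $u$ is an eigenfunction of \eqref{linearized-ev-problem} for the eigenvalue $\lambda$, and $u > 0$ in $\Omega$ by Proposition \ref{proposition-ev-problem}. If we had $\lambda > \lambda_1$, then $u$ would be $L^2$-orthogonal to the first eigenfunction $u_1$; but, after fixing the sign of $u_1$, both $u$ and $u_1$ are strictly positive, so $(u, u_1)_{L^2(\Omega)} > 0$, a contradiction. Therefore $\lambda = \lambda_1$, and by simplicity $u = \pm u_1$, i.e.\ $|u_1| = |u|$. Finally $\tfrac{\lambda}{\lambda_2} = \tfrac{\lambda_1}{\lambda_2} < 1$ because $\lambda_1 < \lambda_2$. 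The step I expect to require the most care is the application of the strong maximum principle to $-\tfrac12\Delta + (V + \beta|u|^2)$ with its merely $L^\infty$ (in fact Hölder-continuous) coefficient, and the resulting strict positivity of $|w|$; this is classical and is carried out in detail in \cite[Appendix]{CCM10}, which we may simply cite.
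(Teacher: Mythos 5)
Your proposal is correct and follows essentially the same route as the paper, which only sketches the argument (spectral theory for the coercive symmetric form $a_u(\cdot,\cdot)$, simplicity of $\lambda_1$ because a first eigenfunction cannot change sign, and the identification $u_1=\pm u$, $\lambda=\lambda_1$ via the impossibility of two $L^2$-orthogonal strictly positive eigenfunctions) and defers the details to \cite{CCM10}. Your write-up simply fills in those details, in particular the diamagnetic-inequality and strong-maximum-principle steps, exactly as the cited reference does.
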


\section{Basic inverse iteration and convergence rates}
\label{section-basic-inverse-iteration}

A classical method for computing the smallest eigenvalue of a linear eigenvalue problem is the inverse iteration method, which requires to apply the inverse of a differential operator repeatedly to a given starting function and normalize the result after each application of the inverse. Since the Gross-Pitaevskii eigenvector problem \eqref{groundstate-ev-problem} is nonlinear and a direct inversion of the nonlinear operator is impractical, it is reasonable to first linearize the differential operator using the previous approximation and to then invert it afterwards. This simple procedure leads to the basic version of the inverse iteration for the Gross--Pitaevskii eigenvector problem (GPE). This basic version (without damping or other modifications) is e.g. considered in the numerical experiments presented in \cite{HSW21,HeP20,JarKM14}.

\begin{definition}[Basic inverse iteration for the GPE]
\label{definition-basic-inverse-iteration}
For $v\in H^1_0(\Omega)$, let $\mathcal{A}_v : H^1_0(\Omega) \rightarrow H^{-1}(\Omega)$ denote the $v$-linearized differential operator given by
\begin{align*}
\mathcal{A}_v w := a_v(w , \cdot ) \qquad\mbox{for } w \in H^1_0(\Omega).
\end{align*}
Its inverse, interpreted as an isomorphism on $H^1_0(\Omega)$, is denoted by $\mathcal{G}_v : H^1_0(\Omega) \rightarrow H^1_0(\Omega)$, with
\begin{align*}
\mathcal{G}_{v}  w :=  \mathcal{A}_{v}^{-1}\mathcal{I} w, 
\end{align*}
where $\mathcal{I} : H^1_0(\Omega) \rightarrow H^{-1}(\Omega)$ is the canonical identification $\mathcal{I}v:=(v,\cdot)_{L^2(\Omega)}$.

Given an initial value $u^0 \in H^1_0(\Omega)$ with $\| u^0 \|=1$, the generalized inverse iterations are recursively given by
\begin{align}
\label{basic-inverse-iterations}
u^{n+1} := \frac{\mathcal{G}_{u^n} (u^{n})}{\| \mathcal{G}_{u^n} (u^{n}) \|} \qquad \mbox{for } n\in \mathbb{N}.
\end{align}
Note that the normalization step after each iteration is crucial for nonlinear problems. The eigenvalue after $n$ iterations is approximated by $\lambda^{(n)}:=a_{u^n}(u^n,u^n)$. 
\end{definition}
The above method obviously generalizes the classical inverse iterations for linear eigenvalue problems which we recover for $\beta=0$. Even in its basic version, the iterations \eqref{basic-inverse-iterations} are a popular choice for computing ground states due to its simple implementation. However, despite its popularity, a proof of convergence remained open. Theorem \ref{thm_conv_basic_inverse_iteration}, which is our first main result, closes this gap. It will be proved in Section \ref{subsection-proof-main-1} below. To formulate it, we require another technical assumption \ref{A4}, which demands that there is no open ball in $\Omega$ on which the potential $V$ fully vanishes. 
\begin{enumerate}[resume,label={(A\arabic*)}]
\item\label{A4} For all $x\in \Omega$ and all $\eps>0$ it holds
$$
\| V \|_{L^{\infty}(\Omega \cap B_{\eps}(x))} > 0,
$$
where $B_{\eps}(x)$ is the ball with radius $\eps$ around $x$.
\end{enumerate}
Assumption \ref{A4} includes for example the important class of harmonic trapping potentials $V(x)=\sum_{i=1}^d \gamma_i x_i^2$ with trapping frequencies $\gamma_i>0$. Note however that assumption \ref{A4} is always uncritical since we can simply add an arbitrary small constant $\delta>0$ to $V\ge 0$ so that \ref{A4} is fulfilled for $V(x)+\delta$. This will not change the ground state $u$ and just shifts the spectrum by $\delta$.
\begin{theorem}[Local convergence of the basic inverse iterations]
\label{thm_conv_basic_inverse_iteration}
Assume \ref{A1}-\ref{A4}, let $u \in H^1_0(\Omega)$ denote the $L^2$-normalized ground state to the Gross--Pitaevskii eigenvector problem \eqref{groundstate-ev-problem} (respectively \eqref{def-ground-state}) and let $u^n\in H^1_0(\Omega)$ denote the approximations generated by the basic inverse iteration \eqref{basic-inverse-iterations}. Then there exists a neighborhood $S$ of $u$ and a constant $C>0$ such that
\begin{align}
\label{rate-basic-inverse}
\|  u^n - u \|_{H^1(\Omega)} \le C \hspace{2pt}\, |\tfrac{\lambda_1}{\lambda_2}|^n \,\hspace{2pt} \| u^0 - u \|_{H^1(\Omega)}
\end{align}
for all starting values $u^0 \in S$
and $n\ge  1$. Here $\lambda_1= \lambda>0$ is the ground state eigenvalue and $\lambda_2>\lambda_1$ is the second eigenvalue of the linearized eigenvalue problem \eqref{linearized-ev-problem}.
\end{theorem}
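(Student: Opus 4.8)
The plan is to recognise the ground state $u$ as a fixed point of the (smooth) iteration map $\Phi(v):=\mathcal{G}_v v/\|\mathcal{G}_v v\|$ and to control the iteration through the spectrum of $D\Phi(u)$. First I would record the structural facts. Since $a_v(w,w)\ge \tfrac12\|\nabla w\|^2$, the operator $\mathcal{A}_v$ is boundedly invertible with a bound uniform in $v$, and $v\mapsto\mathcal{G}_v$ depends smoothly on $v$; together with $\mathcal{G}_u u=\lambda^{-1}u\ne 0$ this shows that $\Phi$ is well defined and $C^2$ on an $H^1_0(\Omega)$-neighbourhood of $u$ and that $\Phi(u)=u$. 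Because $\Phi$ maps the unit $L^2$-sphere to itself, $D\Phi(u)$ maps the tangent space $T_u:=\{h\in H^1_0(\Omega): (u,h)_{L^2(\Omega)}=0\}$ into itself; and since $a_u(u,\cdot)=\lambda(u,\cdot)_{L^2(\Omega)}$, the space $T_u$ is at the same time the $a_u$-orthogonal complement of $u$, so the $L^2$-orthogonal projection $\Pi_u$ onto $\mathrm{span}(u)$ is simultaneously the $a_u$-orthogonal one.

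Next I would differentiate. Differentiating $a_v(\mathcal{G}_v v,\varphi)=(v,\varphi)_{L^2(\Omega)}$ at $v=u$ in a direction $h$ and using $\mathcal{G}_u u=\lambda^{-1}u$ gives $D\big(v\mapsto\mathcal{G}_v v\big)(u)[h]=\mathcal{G}_u\big(h-2\beta\lambda^{-1}u^2h\big)$, and combining with the derivative of the normalisation yields, for $h\in T_u$,
\[
Th:=D\Phi(u)|_{T_u}[h]=\lambda\,(I-\Pi_u)\,\mathcal{G}_u\big(h-2\beta\lambda^{-1}u^2h\big).
\]
The crucial point is that $T$ is self-adjoint with respect to $a_u(\cdot,\cdot)$ on $T_u$: using that $(I-\Pi_u)$ is the $a_u$-orthogonal projection onto $T_u$ and that $a_u(\mathcal{G}_u f,\varphi)=(f,\varphi)_{L^2(\Omega)}$, one obtains $a_u(Th,g)=\lambda(h,g)_{L^2(\Omega)}-2\beta(u^2h,g)_{L^2(\Omega)}$, which is symmetric in $h,g$. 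Hence $\|T\|_{a_u}=\rho(T)$ and the one-step error reduction of the iteration is governed exactly by
\[
\rho(T)=\sup_{0\ne h\in T_u}\frac{\big|\,\lambda\|h\|^2-2\beta\|uh\|^2\,\big|}{a_u(h,h)} ,
\]
which is the \quotes{weighted linear eigenvalue problem} announced in the abstract.

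The heart of the proof is then the estimate $\rho(T)\le \lambda_1/\lambda_2$. One inequality is easy and is the one that reproduces the linear rate: for $h\in T_u$ the min-max principle gives $a_u(h,h)\ge\lambda_2\|h\|^2$, hence $\lambda\|h\|^2-2\beta\|uh\|^2\le\lambda\|h\|^2\le(\lambda/\lambda_2)\,a_u(h,h)$ because $\beta,u^2\ge 0$. The delicate half is to bound $2\beta\|uh\|^2-\lambda\|h\|^2$ by $(\lambda/\lambda_2)a_u(h,h)$, since the weight $\lambda-2\beta u^2$ is indefinite and can be negative on most of $\Omega$ in the strongly interacting regime. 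Writing $b$ for the bilinear form obtained from $a_u$ by setting $\beta=0$, one has $2\beta\|uh\|^2=2\big(a_u(h,h)-b(h,h)\big)$, and a short manipulation shows that this half of the bound is equivalent to
\[
b(h,h)+\lambda_2\|h\|^2\ \ge\ \tfrac{(2\lambda_2-\lambda)\beta}{\lambda}\,\|uh\|^2 \qquad\text{for all }h\in T_u .
\]
Here I would use the ground-state substitution identity $a_u(\phi,\phi)=\lambda\|\phi\|^2+\tfrac12\int_\Omega u^2|\nabla(\phi/u)|^2$ (valid for $\phi\in H^1_0(\Omega)$ by density and following from the ground-state equation), together with the rewriting $\lambda-2\beta u^2=-\lambda+2V-\Delta u/u$ coming from $\tfrac12\Delta u/u=V+\beta u^2-\lambda$. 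This is the point where assumption \ref{A4} is essential: after integrating by parts, the nondegeneracy of $V$ (no vanishing ball), together with $u>0$ in $\Omega$ and the boundary behaviour of $u$, is what prevents the extremal direction $h$ from localising in a region where $\beta u^2$ dominates without paying a correspondingly large amount of $b(h,h)$; it is also consistent with the remark that adding a small constant to $V$ is harmless. I expect this spectral estimate to be the main obstacle of the whole argument.

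Finally I would upgrade the linearised statement to the nonlinear one. Writing $u^n=c_nu+w_n$ with $w_n\in T_u$ and $c_n=(u,u^n)_{L^2(\Omega)}$, the normalisation forces $c_n=\sqrt{1-\|w_n\|^2}$, so $|c_n-1|\le C\|w_n\|^2$, and a Taylor expansion of $\Phi$ at $u$ (using the $C^2$-regularity and the fact that the factor $\|\mathcal{G}_{u^n}u^n\|$ cancels with the $u$-component, leaving only a second-order remainder) gives a recursion
\[
w_{n+1}=Tw_n+R_n,\qquad \|R_n\|_{H^1(\Omega)}\le C\,\|w_n\|_{H^1(\Omega)}^2
\]
in a small ball around $u$. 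Since the $a_u$-norm is equivalent to the $H^1$-norm and $\|T\|_{a_u}=\rho(T)\le\lambda_1/\lambda_2<1$, a standard discrete-Gronwall / bootstrap argument yields $\|w_n\|_{H^1(\Omega)}\le C(\lambda_1/\lambda_2)^n\|w_0\|_{H^1(\Omega)}$ once $\|w_0\|_{H^1(\Omega)}$ is small enough; choosing the neighbourhood $S$ of $u$ sufficiently small keeps $u^n$ near $u$ (and not near $-u$), so that $\|u^n-u\|_{H^1(\Omega)}\sim\|w_n\|_{H^1(\Omega)}$ and $\|u^0-u\|_{H^1(\Omega)}\sim\|w_0\|_{H^1(\Omega)}$, which gives \eqref{rate-basic-inverse}.
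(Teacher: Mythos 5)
Your setup agrees with the paper's: the same fixed-point map, the same Fr\'echet derivative $Th=\lambda(I-\Pi_u)\mathcal{G}_u\bigl((1-2\beta\lambda^{-1}u^2)h\bigr)$, the same restriction to the $L^2$-orthogonal complement of $u$, and the same reduction of the rate to the weighted Rayleigh quotient $\sup_{h\in T_u}\bigl|\lambda\|h\|^2-2\beta\|uh\|^2\bigr|/a_u(h,h)$ (this is exactly the eigenvalue problem \eqref{weighted-evp}). Your observation that $T$ is $a_u$-self-adjoint on $T_u$, so that the spectral radius equals the operator norm in an $H^1$-equivalent norm and the Ostrowski $\eps$ can be avoided by a direct Taylor/bootstrap, is a genuine (small) improvement in the bookkeeping. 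The ``easy half'' of the spectral estimate is also correct.

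However, the proof has a genuine gap precisely where you flag ``the main obstacle of the whole argument'': the bound $2\beta\|uh\|^2-\lambda\|h\|^2\le(\lambda/\lambda_2)\,a_u(h,h)$ is never proved. You reduce it to the integrated inequality $b(h,h)+\lambda_2\|h\|^2\ge\tfrac{(2\lambda_2-\lambda)\beta}{\lambda}\|uh\|^2$ on $T_u$ and gesture at a ground-state substitution identity, but that route is neither carried out nor obviously viable. The paper's key ingredient here is much more elementary and is \emph{pointwise}, not integrated: Lemma \ref{lemma-Linfty-theta-u} shows $\beta\|u\|^2_{L^\infty(\Omega)}\le\lambda$ by evaluating the Euler--Lagrange equation at an interior maximum of $u$ (where $-\Delta u\ge0$ and $u>0$), first for smooth $V$ and then for $V\in L^\infty$ by a density/compactness argument. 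This gives $\|\,\lambda-2\beta|u|^2\,\|_{L^\infty(\Omega)}\le\lambda$, after which \emph{both} halves of your estimate collapse to the same one-line min-max argument $|((\lambda-2\beta u^2)h,h)_{L^2(\Omega)}|\le\lambda\|h\|^2\le(\lambda/\lambda_2)a_u(h,h)$. Relatedly, your attribution of assumption \ref{A4} is misplaced: the pointwise bound needs only \ref{A1}--\ref{A3}; \ref{A4} is used in the paper only to ensure $|1-2\tfrac{\beta}{\lambda}|u|^2|<1$ outside a null set, which yields a strict gap $\rho\le(1-\delta)\lambda_1/\lambda_2$ that absorbs the $\eps$ from the Ostrowski theorem (an issue your self-adjointness argument would sidestep anyway). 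Without Lemma \ref{lemma-Linfty-theta-u} or a completed proof of your substitute inequality, the convergence rate \eqref{rate-basic-inverse} is not established.
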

Proposition \ref{proposition-linearized-problem} shows
$$
|\tfrac{\lambda_1}{\lambda_2}| < 1,
$$
hence we have a guaranteed linear convergence. Noting that the Gross--Pitaevskii eigenvector problem \eqref{groundstate-ev-problem} and the linearized eigenvalue problem \eqref{linearized-ev-problem} are identical for $\beta=0$, we recover the well-known result for linear eigenvalue problems that the convergence speed of the inverse iteration depends on the size of the first spectral gap. If we drop assumption \ref{A4}, the convergence rates in \eqref{rate-basic-inverse} change to $ |\tfrac{\lambda_1 +\delta}{\lambda_2 + \delta}|$ for any $\delta>0$ and where the neighborhood $S$ would now depend on $\delta$.
\begin{remark}[Sharper characterization of the convergence rates]
\label{remark-sharper-rates}
In the setting of Theorem \ref{thm_conv_basic_inverse_iteration}, an even sharper bound for the convergence rate can be obtained. In fact, for any $\eps>0$ there exist a neighborhood $S_{\eps}$ of $u$ and a constant $C_\eps>0$ such that
\begin{align}
\label{improved-rate}
\|  u^n - u \|_{H^1(\Omega)} \le C_\eps \hspace{2pt} ||\mu_1| + \eps|^n \hspace{2pt} \| u^0 - u \|_{H^1(\Omega)},
\end{align}
where $\mu_1$ (with $|\mu_1|<1$) is the largest eigenvalue in magnitude of the weighted {\rm linear} eigenvalue problem seeking
$$
v_i \in V_{u}^{\perp} := \{ w \in H^1_0(\Omega) \,| \hspace{2pt} (u,w)_{L^2(\Omega)}=0 \}
$$
and $\mu_i \in \mathbb{R}$, such that
\begin{align}
\label{weighted-evp}
( v_i , (\lambda- 2\beta |u|^2 ) w )_{L^2(\Omega)} = \mu_i \, a_u( v_i , w ) \qquad \mbox{for all } w\in  V_{u}^{\perp}.
\end{align}
We will later see that $|\mu_1| \le \tfrac{\lambda_1}{\lambda_2}$ (with $|\mu_1| = \tfrac{\lambda_1}{\lambda_2}$  only for $\beta=0$), so that the rate \eqref{improved-rate} is indeed sharper. This is a direct conclusion of the proof presented in Section \ref{subsection-proof-main-1}. Also the main results later formulated in Theorem \ref{thm_conv_DNGF_iteration} and Theorem \ref{thm_conv_inverse_iteration_damping} still hold with this modification. In the numerical experiments in Section \ref{section-numerical-experiments} we will see that $|\mu_1|$ resembles the asymptotic convergence rate very accurately.
\end{remark}
The rest of this section is devoted to the proof of Theorem \ref{thm_conv_basic_inverse_iteration}, which falls into four major steps. In the first step we write the iterations as a fixed point iteration of the form $u^{n+1}=\phi(u^n)$ in order to apply the Ostrowski theorem. The Ostrowski theorem states that the local convergence rate of the iteration is given by the spectral radius of $\phi^{\prime}(u)$. We are therefore first concerned with computing the Fr\'echet derivative of $\phi$.  This is done in Section \ref{subsection-ostrowski}. In the second step, we need to find a suitable characterization of the spectrum of $\phi^{\prime}(u)$ based on a linearized version of the Gross--Pitaevskii operator. This lead us to the weighted eigenvalue problem \eqref{weighted-evp}. In order to bound the corresponding maximum eigenvalue in magnitude (which coincides with the local convergence rate), the third step requires the derivation of a pointwise estimate for the weighting function $\lambda- 2\beta |u|^2$ appearing in \eqref{weighted-evp}. This estimate is established by Lemma \ref{lemma-Linfty-theta-u} in Section \ref{subsection-proof-main-1}. Finally, in the last step we bound the largest eigenvalue of the weighted problem  \eqref{weighted-evp} by using the Courant--Fischer min-max principle. Here, the $L^{\infty}$-bound of the weighting function from step 3 is entering crucially. These final steps of the proof are all presented in Section \ref{subsection-proof-main-1}.

\subsection{Ostrowski theorem and calculation of Fr\'echet derivatives}
\label{subsection-ostrowski}
As sketched above, all local convergence results in this paper are established through the Ostrowski theorem in Banach spaces (cf. \cite{Ost66} for the original theorem by Ostrowski and \cite{AHP21NumMath,Shi81} for a simple proof in the general setting).
Adapted to our purposes, the result reads as follows.
\begin{proposition}[Ostrowski theorem]
\label{prop-ostrowski}
Let $\phi\colon H^1_0(\Omega) \rightarrow H^1_0(\Omega)$ be a mapping that is Fr\'echet-differentiable at $u \in H^1_0(\Omega)$ such that the Fr\'echet-derivative $\phi^{\prime}(u)\colon H^1_0(\Omega) \rightarrow H^1_0(\Omega)$ is a bounded linear operator with spectral radius 
$
\rho := \rho( \hspace{2pt} \phi^{\prime}(u) \hspace{2pt}) < 1.
$
Then there is an open $H^1$-neighborhood $S$ of $u$, such that for all starting values $u^{0} \in S$ we have that the fixed point iterations
$$
u^{n+1} := \phi( u^n )
$$ 
converge strongly in $H^1_0(\Omega)$ to $u$, i.e., $\| u^n - u \|_{H^1(\Omega)} \rightarrow 0$ for $n\rightarrow \infty$. Furthermore, for every $\eps>0$ there exists a neighborhood $S_{\eps}$ of $u$ and a  constant $C_\eps>0$ such that
$$
\|  u^n - u \|_{H^1(\Omega)} \le C_\eps \hspace{2pt} |\rho + \eps|^n \hspace{2pt} \| u^0 - u \|_{H^1(\Omega)} \qquad \mbox{for all } u^0 \in S_{\eps} \mbox{ and } n\ge  1.
$$
Hence, $\rho$ defines the asymptotic linear convergence rate of the fixed point iteration. 
\end{proposition}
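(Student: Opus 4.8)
The plan is to prove this as a standard fixed-point contraction argument carried out in an adapted norm, which is the classical route to Ostrowski's theorem. Throughout I write $L := \phi^{\prime}(u)$ and $e^n := u^n - u$, and I use that $u$ is a fixed point of $\phi$ (implicit in the statement: since Fr\'echet differentiability at $u$ forces continuity there, $u^n \to u$ gives $u = \lim_n \phi(u^n) = \phi(u)$). Fix $\eps>0$; it suffices to treat the case $\rho + \eps < 1$, since the neighborhood $S$ needed for the bare convergence claim can then be taken as any $S_{\eps_0}$ with $\rho + \eps_0 < 1$, and for larger $\eps$ the asserted rate is only weaker.

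The first and central step is to build an equivalent norm $\| \cdot \|_{\ast}$ on $H^1_0(\Omega)$ in which $L$ becomes a strict contraction with factor arbitrarily close to $\rho$. Set $q := \rho + \tfrac{\eps}{2}$. By Gelfand's spectral radius formula $\rho = \lim_{n\to\infty} \| L^n \|^{1/n}$, there is a constant $M \ge 1$ with $\| L^n \| \le M q^n$ for all $n\ge 0$. I then define
\[
\| x \|_{\ast} := \sup_{n\ge 0} \, q^{-n} \, \| L^n x \|_{H^1(\Omega)} .
\]
The bound $\| x \|_{H^1} \le \| x \|_{\ast} \le M \| x \|_{H^1}$ shows that $\| \cdot \|_{\ast}$ is equivalent to the $H^1$-norm, and a one-line index shift gives $\| L x \|_{\ast} \le q \, \| x \|_{\ast}$, so $L$ contracts by the factor $q$ in this norm.

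Next I would exploit Fr\'echet differentiability to absorb the nonlinear remainder. Writing $\phi(u+h) - u = L h + r(h)$ with $\| r(h) \|_{H^1} / \| h \|_{H^1} \to 0$ as $\| h \|_{H^1} \to 0$, the norm equivalence transfers this to $\| r(h) \|_{\ast} = o(\| h \|_{\ast})$, so there is a radius $\eta>0$ with $\| r(h) \|_{\ast} \le \tfrac{\eps}{2} \| h \|_{\ast}$ whenever $\| h \|_{\ast} \le \eta$. On the ball $B_\eta := \{ \| e \|_{\ast} \le \eta \}$ the error recursion $e^{n+1} = L e^n + r(e^n)$ then yields
\[
\| e^{n+1} \|_{\ast} \le q \, \| e^n \|_{\ast} + \tfrac{\eps}{2} \, \| e^n \|_{\ast} = (\rho + \eps) \, \| e^n \|_{\ast}.
\]
Since $\rho + \eps < 1$, the ball $B_\eta$ is forward-invariant, so an induction from any $e^0 \in B_\eta$ keeps every iterate in $B_\eta$ and gives $\| e^n \|_{\ast} \le (\rho + \eps)^n \, \| e^0 \|_{\ast}$. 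Translating back through the equivalence constants produces $\| u^n - u \|_{H^1} \le C_\eps \, (\rho+\eps)^n \, \| u^0 - u \|_{H^1}$ with $C_\eps = M$, and $S_\eps := \{ v : \| v - u \|_{\ast} < \eta \}$ is an $H^1$-open neighborhood of $u$ by equivalence; the convergence statement follows with $S := S_{\eps_0}$.

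The only genuinely substantive point is the adapted-norm construction of the second paragraph, which rests on Gelfand's formula; everything afterward is a routine contraction induction. The subtlety to watch is the bookkeeping of the contraction budget: I split it as $q = \rho + \tfrac{\eps}{2}$ for the linear part and $\tfrac{\eps}{2}$ for the remainder, and the neighborhood $B_\eta$ must be chosen \emph{after} the adapted norm is fixed, so that its forward-invariance guarantees the iterates never leave the region where the remainder estimate is valid.
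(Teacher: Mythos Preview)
Your argument is correct and matches the approach the paper has in mind: the paper does not give its own proof of Proposition~\ref{prop-ostrowski} but cites \cite{Ost66,AHP21NumMath,Shi81}, and in Section~\ref{section-numerical-experiments} it sketches exactly your route---construct an equivalent norm $\|\cdot\|_\eps$ in which $\phi'(u)$ has operator norm at most $\rho+\eps$, then contract. One small remark: your parenthetical justification that $u$ is a fixed point is circular as written (you cannot yet use $u^n\to u$), but you are right that $\phi(u)=u$ is an implicit hypothesis of the proposition, and in every application in the paper it is verified directly.
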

As we want to apply Proposition \ref{prop-ostrowski} to prove the local convergence in Theorem \ref{thm_conv_basic_inverse_iteration}, we need to compute the Fr\'echet derivative of $\phi(v)= \tfrac{\mathcal{G}_{v} v}{\| \mathcal{G}_{v} v \|}$, evaluate it for the ground state $u$ and then estimate the spectral radius. In the first step, we shall therefore compute the Fr\'echet derivatives of $\mathcal{G}_{v} v$ and $\phi(v)$.

\begin{lemma}
\label{lemma-1}
Let $\psi : H^1_0(\Omega) \rightarrow H^1_0(\Omega)$ be the nonlinear map given by $\psi(v):=\mathcal{G}_{v} v$, then $\psi$ is Fr\'echet differentiable for all $v\in H^1_0(\Omega)$ with derivative $\psi^{\prime}(v): H^1_0(\Omega) \rightarrow H^1_0(\Omega)$ given by
\begin{align}
\label{psi-derivative}
 \psi^{\prime}(v) h = \mathcal{G}_{v} ( \hspace{1pt} (1 - 2 \beta \, v \, \mathcal{G}_{v} v) h \hspace{1pt} )
\end{align}  
in the direction $h\in H^1_0(\Omega)$. Furthermore, the map $\phi : H^1_0(\Omega) \rightarrow H^1_0(\Omega)$ with
$$
\phi(v):= \frac{\psi(v)}{\| \psi(v) \|}
$$
is also Fr\'echet differentiable for all $v\in H^1_0(\Omega) \setminus \{ 0\}$, where the corresponding derivative, $\phi^{\prime}(v): H^1_0(\Omega) \rightarrow H^1_0(\Omega)$, is given by
\begin{align*}
 \phi^{\prime}(v) h 
&= \frac{1}{ \| \mathcal{G}_{v} v \| } \mathcal{G}_{v} ( \hspace{1pt} (1 - 2 \beta \, v \, \mathcal{G}_{v} v) h \hspace{1pt} ) - \frac{1}{ \| \mathcal{G}_{v} v \|^3 }
( \, \mathcal{G}_{v} ( \hspace{1pt} (1 - 2 \beta \, v \, \mathcal{G}_{v} v) h \hspace{1pt} ) \, ,\mathcal{G}_{v} v \, )_{L^2(\Omega)}\, \mathcal{G}_{v} v
\end{align*}
for $h\in H^1_0(\Omega)$.
\end{lemma}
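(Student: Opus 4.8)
The plan is to compute the derivative of $\psi(v)=\mathcal{G}_v v$ by splitting the $v$-dependence into the two places where $v$ occurs: the subscript (the linearization point, which enters $\mathcal{A}_v$ only through the zeroth-order term $\beta|v|^2 w$) and the argument. First I would establish that the map $v \mapsto \mathcal{A}_v \in \mathcal{L}(H^1_0(\Omega), H^{-1}(\Omega))$ is Fréchet differentiable, which is immediate since $v \mapsto \beta|v|^2 w$ is a polynomial (quadratic) map into $H^{-1}(\Omega)$, using the Sobolev embedding $H^1_0(\Omega)\hookrightarrow L^4(\Omega)$ valid for $d\le 3$ (assumption \ref{A1}); its derivative at $v$ in direction $h$ is $w \mapsto 2\beta(v\,h\,w,\cdot)_{L^2(\Omega)}$. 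Then, since inversion of operators is smooth on the open set of isomorphisms (standard Banach-algebra fact, $\tfrac{d}{dt}(A_t^{-1}) = -A_t^{-1}(\tfrac{d}{dt}A_t)A_t^{-1}$), the map $v\mapsto \mathcal{A}_v^{-1} = $ (composed with $\mathcal{I}$) $\,\mathcal{G}_v$ is Fréchet differentiable with
\[
(\mathcal{G}_v)'[h]\,g \;=\; -\,\mathcal{G}_v\bigl(\,2\beta\, v\, h\, (\mathcal{G}_v g)\,\bigr),
\]
where I use the shorthand that $2\beta v h (\mathcal{G}_v g)$ denotes the $H^{-1}$-element $(2\beta v h \mathcal{G}_v g, \cdot)_{L^2(\Omega)}$, which lies in $H^{-1}(\Omega)$ by Hölder with exponents $(4,4,4,\infty)$ — actually $(4,4,4)$ times the $H^1_0$ test function, all fine in $d\le 3$.

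Next I would apply the product/chain rule to $\psi(v)=\mathcal{G}_v v$, which has the form (operator depending on $v$) applied to (argument $v$):
\[
\psi'(v)h \;=\; (\mathcal{G}_v)'[h]\,v \;+\; \mathcal{G}_v h \;=\; -\,\mathcal{G}_v\bigl(2\beta\, v\, h\,(\mathcal{G}_v v)\bigr) + \mathcal{G}_v h \;=\; \mathcal{G}_v\bigl((1 - 2\beta\, v\, \mathcal{G}_v v)\,h\bigr),
\]
which is exactly \eqref{psi-derivative}; here one should note that $v\,\mathcal{G}_v v \in L^\infty(\Omega)$ is not needed — it suffices that multiplication by $v\,\mathcal{G}_v v$ maps $H^1_0(\Omega)$ into $H^{-1}(\Omega)$, again by Sobolev embedding, so the expression is well-defined as an element on which $\mathcal{G}_v$ acts. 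For the normalized map $\phi(v)=\psi(v)/\|\psi(v)\|$, I would simply apply the quotient rule, using that $v\mapsto\|\psi(v)\|$ is Fréchet differentiable at every $v$ with $\psi(v)\ne 0$ (true on $H^1_0(\Omega)\setminus\{0\}$ since $\mathcal{G}_v$ is an isomorphism) with derivative $h \mapsto \|\psi(v)\|^{-1}(\psi'(v)h,\psi(v))_{L^2(\Omega)}$; substituting $\psi(v)=\mathcal{G}_v v$ and the formula for $\psi'(v)h$ gives the stated expression for $\phi'(v)h$.

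The only genuinely delicate point — the main obstacle — is verifying the Fréchet (as opposed to merely Gâteaux) differentiability of $v\mapsto\mathcal{G}_v$ uniformly enough to push the remainder estimates through, i.e.\ showing the increment $\mathcal{G}_{v+h} - \mathcal{G}_v - (\mathcal{G}_v)'[h]$ is $o(\|h\|_{H^1})$ in operator norm. This reduces to controlling $\mathcal{A}_{v+h}^{-1} - \mathcal{A}_v^{-1}$ via the Neumann-series / resolvent identity, where one needs the uniform boundedness of $\|\mathcal{A}_w^{-1}\|$ for $w$ near $v$ — which follows from the coercivity constant of $a_w(\cdot,\cdot)$ being bounded below uniformly (the extra term $\beta\||w|^2\cdot\|$ is nonnegative, so coercivity is inherited from the $\tfrac12\|\nabla\cdot\|^2$ part with a constant independent of $w$) — together with the continuity estimate $\|\mathcal{A}_{v+h}-\mathcal{A}_v\|_{\mathcal{L}(H^1_0,H^{-1})} \lesssim \|h\|_{H^1}(\|v\|_{H^1}+\|h\|_{H^1})$ from the $L^4$-bound on the quadratic term. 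All of this is routine but should be stated; the rest of the lemma is bookkeeping with the product and quotient rules.
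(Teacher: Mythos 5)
Your proposal is correct, and it reaches the formulas \eqref{psi-derivative} and the quotient-rule expression for $\phi'(v)h$ by a genuinely different organization of the argument than the paper. The paper differentiates the defining identity $\mathcal{A}_v\psi(v)=\mathcal{I}v$ implicitly in one step, solves the resulting relation $a_v(\psi'(v)h,\cdot)=\mathcal{I}(h-2\beta\,v\,\psi(v)\,h)$ for $\psi'(v)h$, and only afterwards verifies via H\"older and the embedding $H^1_0(\Omega)\hookrightarrow L^6(\Omega)$ that the resulting operator is well defined and bounded; the existence of the Fr\'echet derivative is treated rather lightly there. You instead differentiate the operator-valued map $v\mapsto\mathcal{A}_v$ (a quadratic polynomial, hence smooth by the $L^4$-embedding), invoke the smoothness of inversion on the open set of isomorphisms to get $(\mathcal{G}_v)'[h]g=-\mathcal{G}_v(2\beta\,v\,h\,\mathcal{G}_v g)$, and then apply the product rule to $\psi(v)=\mathcal{G}_v v$. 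The two computations are algebraically equivalent (your resolvent identity is the paper's implicit differentiation in disguise), but your route buys a cleaner justification that the derivative genuinely exists in the Fr\'echet sense: differentiability of $\psi$ follows from composing known-smooth maps, and your closing paragraph correctly identifies and resolves the one point that needs care, namely the $o(\|h\|_{H^1})$ remainder for $\mathcal{G}_{v+h}-\mathcal{G}_v$, using the $w$-uniform coercivity of $a_w(\cdot,\cdot)$ (valid since $V\ge 0$ and $\beta\ge 0$) and the Lipschitz-type bound $\|\mathcal{A}_{v+h}-\mathcal{A}_v\|_{\mathcal{L}(H^1_0,H^{-1})}\lesssim\|h\|_{H^1}(\|v\|_{H^1}+\|h\|_{H^1})$. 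The price is a somewhat longer write-up; the paper's implicit version is more economical but leaves the remainder estimate unstated. Your treatment of $\phi$ via the quotient rule, including the observation that $\psi(v)\ne 0$ for $v\ne 0$ because $\mathcal{A}_v^{-1}$ is injective, coincides with the paper's.
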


\begin{proof}
From the definition of $\mathcal{G}_{v}v=\psi(v)$ we have
\begin{align*}
\mathcal{A}_{v} \psi(v) = (v ,\cdot)_{L^2(\Omega)} = \mathcal{I} v,
\end{align*}
which maps $v\in H^1_0(\Omega)$ to an element of $H^{-1}(\Omega)$. For a location $v\in H^1_0(\Omega)$ and a direction $h\in H^1_0(\Omega)$ we therefore obtain for the Fr\'echet derivative
\begin{align*}
\mathcal{I}h &= [ \tfrac{\mbox{\scriptsize d}}{\mbox{\scriptsize d} v } [ \mathcal{A}_{v} \psi(v) ] (v) ] h =  \tfrac{\mbox{\scriptsize d}}{\mbox{\scriptsize d} v } [ a_v( \psi(v) , \cdot ) ](h) = 
2 \beta \, \mathcal{I} ( v \, \psi(v) \, h )
+ a_v ( \, \psi^{\prime}(v) h \, , \cdot ).
\end{align*}
Rearranging the terms, we have that $ \psi^{\prime}(v) h \in H^1_0(\Omega)$ fulfills 
\begin{align*}
a_v ( \, \psi^{\prime}(v) h \, , \cdot ) = \mathcal{I}(h - 2 \beta \, v \, \psi(v) \, h ) 
\end{align*}
and therefore
\begin{align}
\label{proof-psi-derivative}
 \psi^{\prime}(v) h = \mathcal{G}_{v} (h - 2 \beta \, v \, \psi(v) \, h ).
\end{align}
Note that with the H\"older inequality and the embedding of $H^1_0(\Omega)$ into $L^6(\Omega)$ we have
\begin{align*}
\| h - 2 \beta \, v \, \psi(v) \, h \| &\le \| h \| + 2 \beta \| v \|_{L^6(\Omega)} \| \psi(v) \|_{L^6(\Omega)} \| h \|_{L^6(\Omega)} \\
&\le C \left( 1+  \beta \| v \|_{H^1(\Omega)} \| \psi(v) \|_{H^1(\Omega)}  \right) \| h \|_{H^1(\Omega)} \\
&\le C \left( 1+  \beta \| v \|_{H^1(\Omega)}^2  \right) \| h \|_{H^1(\Omega)},
\end{align*}
where $C$ only depends on $\Omega$ and $d$. Hence, $(h - 2 \beta \, v \, \psi(v) \, h) \in L^2(\Omega)$ and $ \mathcal{G}_{v} (h - 2 \beta \, v \, \psi(v) \, h ) \in H^1_0(\Omega)$ is well-defined with
\begin{align*}
\| \mathcal{G}_{v} (h - 2 \beta \, v \, \psi(v) \, h )  \|_{H^1(\Omega)} \le C \| h - 2 \beta \, v \, \psi(v) \, h \| \le  C \left( 1+  \beta \| v \|_{H^1(\Omega)}^2  \right) \| h \|_{H^1(\Omega)}.
\end{align*}
Replacing $\psi(v)$ by $\mathcal{G}_{v} v$ in \eqref{proof-psi-derivative} proves \eqref{psi-derivative}.

For the Fr\'echet derivative of $\phi(v)$, we easily calculate that
\begin{align*}
\phi^{\prime}(u) 
= \frac{\psi^{\prime}(u)}{ \| \psi(u) \| } - \frac{1}{ \| \psi(u) \|^3 }
(\psi^{\prime}(u) , \psi(u))_{L^2(\Omega)}\, \psi(u).
\end{align*}
Replacing $\psi(v)=\mathcal{G}_{v} v$ and $\psi^{\prime}(v) h= \mathcal{G}_{v} (h - 2 \beta \, v \, \psi(v) \, h )$ in this formula finishes the proof.
\end{proof}
With Lemma \ref{lemma-1}, we can now calculate $\phi^{\prime}(u)$ for the ground state $u$ by exploiting that $\mathcal{G}_{u} u = \mathcal{A}_u^{-1} \mathcal{I} u = \lambda^{-1} u$ and $\| u \|=1$. We obtain the following result.

\begin{conclusion}
\label{frechet-derivatives-psi-phi-in-u}
Let $u \in H^1_0(\Omega)$ denote the ground state given by \eqref{groundstate-ev-problem} and $h\in H^1_0(\Omega)$ an arbitrary function. In the setting of Lemma \ref{lemma-1} we obtain for the Fr\'echet derivatives in $u$ that
\begin{align}
\label{psi-derivative-u}
 \psi^{\prime}(u) h = \mathcal{G}_{u} ( \hspace{1pt} (1 - 2 \tfrac{\beta}{\lambda} \, |u|^2) h \hspace{1pt} )
\end{align} 
and
\begin{align*}
 \phi^{\prime}(u) h
&= \lambda \, \mathcal{G}_{u} ( \hspace{1pt} (1 - 2 \tfrac{\beta}{\lambda} \, |u|^2) h \hspace{1pt} ) - \lambda \,
( \, \mathcal{G}_{u} ( \hspace{1pt} (1 - 2  \tfrac{\beta}{\lambda} \, |u|^2) h \hspace{1pt} ) \, , u \, )_{L^2(\Omega)}\, u.
\end{align*}
\end{conclusion}

\subsection{Proof of Theorem \ref{thm_conv_basic_inverse_iteration}}
\label{subsection-proof-main-1}
In the light of Conclusion \ref{frechet-derivatives-psi-phi-in-u}, it is apparent that we need to get control over the term $(1 - 2 \tfrac{\beta}{\lambda} \, |u|^2)$ in order to be able to estimate the spectral radius of $\phi^{\prime}(u)$. The following lemma is crucial for that and it will be later also the key to understand the effect of spectral shifts on the method.

\begin{lemma}\label{lemma-Linfty-theta-u}
Assume \ref{A1}-\ref{A3} and let $u$ denote as usual the unique positive ground state given by \eqref{groundstate-ev-problem} and let $\lambda>0$ denote the corresponding eigenvalue. Then it holds
\begin{align}
\label{Linfty-theta-u}
\| 1 -2 \tfrac{\beta}{\lambda} |u|^2 \|_{L^{\infty}(\Omega)} = 1.
\end{align}
\end{lemma}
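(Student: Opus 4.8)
The plan is to deduce both inequalities hidden in \eqref{Linfty-theta-u} from the single pointwise estimate
\[
\beta\,|u(x)|^2 \le \lambda \qquad \text{for all } x\in\Omega ,
\]
treating the trivial case $\beta=0$ (in which $1-2\tfrac{\beta}{\lambda}|u|^2\equiv 1$) separately; so assume $\beta>0$. Granting this estimate, the bound $\|1-2\tfrac{\beta}{\lambda}|u|^2\|_{L^\infty(\Omega)}\le 1$ follows at once: since $\beta\ge 0$ and $|u|^2\ge 0$ one has $1-2\tfrac{\beta}{\lambda}|u|^2\le 1$, and $\beta|u|^2\le\lambda$ gives $1-2\tfrac{\beta}{\lambda}|u|^2\ge 1-2=-1$. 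For the reverse inequality I would exploit the boundary behaviour of $u$: by Proposition \ref{proposition-minimization-problem} we have $u\in C^{0,\alpha}(\overline{\Omega})$ with $u=0$ on $\partial\Omega$, so for every $\eps>0$ there is a relatively open (hence positive-measure) subset of $\Omega$ near $\partial\Omega$ on which $|u|^2<\eps$, where consequently $1-2\tfrac{\beta}{\lambda}|u|^2>1-2\tfrac{\beta}{\lambda}\eps$. Letting $\eps\to 0$ shows $\operatorname{ess\,sup}_\Omega\bigl(1-2\tfrac{\beta}{\lambda}|u|^2\bigr)=1$, and together with the upper bound this is exactly \eqref{Linfty-theta-u}.

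The remaining task, and the real content of the lemma, is the pointwise bound $\beta|u|^2\le\lambda$. I would prove it by a Stampacchia-type truncation argument in the weak formulation \eqref{groundstate-ev-problem}. Set $c:=\sqrt{\lambda/\beta}$ and $\Omega^+:=\{x\in\Omega:u(x)>c\}$, and test \eqref{groundstate-ev-problem} with $v:=(u-c)^+\in H^1_0(\Omega)$. This is an admissible test function because $u$ is continuous up to $\partial\Omega$ with $u=0$ there, so $\{u\ge c\}$ is a closed subset of $\overline{\Omega}$ disjoint from $\partial\Omega$, hence a compact subset of $\Omega$, and therefore $v$ has compact support in $\Omega$. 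Using the Sobolev chain rule ($\nabla v=\nabla u$ on $\Omega^+$ and $\nabla v=0$ elsewhere) together with $|u|^2u=u^3$ (recall $u>0$), the weak equation with this $v$ rearranges to
\[
\tfrac12\int_{\Omega^+}|\nabla u|^2\dx + \int_{\Omega^+} V\,u\,(u-c)\dx + \int_{\Omega^+}\bigl(\beta\,|u|^2-\lambda\bigr)\,u\,(u-c)\dx = 0 .
\]
On $\Omega^+$ all three integrands are nonnegative --- the first trivially, the second because $V\ge 0$ and $u,(u-c)\ge 0$ there, and the third because $\beta|u|^2-\lambda>\beta c^2-\lambda=0$ on $\Omega^+$ --- so each integral must vanish. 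Since the third integrand is in fact strictly positive at every point of $\Omega^+$, this forces $|\Omega^+|=0$, i.e. $u\le c$ almost everywhere, and then $u\le c$ everywhere in $\overline{\Omega}$ by continuity. This is precisely $\beta|u|^2\le\lambda$ in $\Omega$, which closes the argument.

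I expect the pointwise estimate to be the only non-routine step; everything else is bookkeeping. Within that step the two points needing care are the admissibility of $v=(u-c)^+$ as an element of $H^1_0(\Omega)$ --- which is why continuity of $u$ up to the boundary and $u|_{\partial\Omega}=0$ (from Proposition \ref{proposition-minimization-problem}) are invoked to confine the support of $v$ to a compact subset of $\Omega$ --- and the Sobolev truncation identity for $\nabla(u-c)^+$; both are classical. Note that only \ref{A1}--\ref{A3} and the regularity and positivity of $u$ from Propositions \ref{proposition-minimization-problem} and \ref{proposition-ev-problem} enter; assumption \ref{A4} is not needed for this lemma.
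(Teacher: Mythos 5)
Your proposal is correct, and it takes a genuinely different route from the paper. The core of both arguments is the same pointwise estimate $\beta|u(x)|^2\le\lambda$ on $\Omega$ (combined with $u|_{\partial\Omega}=0$ for the reverse inequality), but you obtain it by a weak maximum principle: testing \eqref{groundstate-ev-problem} with the truncation $(u-c)^+$, $c=\sqrt{\lambda/\beta}$, and observing that all three resulting integrands are nonnegative on $\{u>c\}$ while the zeroth-order one is strictly positive there, forcing that set to be null. The paper instead proves the bound by the classical strong maximum principle at an interior maximum point $x^\ast$ of $u$, using $-\Delta u(x^\ast)\ge 0$ in the pointwise equation; since this needs $u\in C^2(\Omega)$, the paper first assumes $V\in C^{\infty}(\overline{\Omega})$ and then spends the second (and longer) half of the proof on a density argument --- approximating $V$ by smooth potentials $V_k$, showing convergence of the energies and of the ground states $\tilde u_k\to u$ in $H^2(\Omega)\hookrightarrow L^{\infty}(\Omega)$, and passing to the limit in the $L^{\infty}$ identity. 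Your Stampacchia-type argument works directly in the weak formulation for $V\in L^{\infty}(\Omega)$, so the entire approximation step becomes unnecessary; the only facts you use are $(u-c)^+\in H^1_0(\Omega)$ (which in fact holds for any $u\in H^1_0(\Omega)$ and $c>0$ by standard truncation results, so your compact-support justification, while valid, is more than is needed) and the Sobolev chain rule, both classical. You are also right that \ref{A4} plays no role here. The trade-off is that the paper's density machinery is reused conceptually nowhere else, so your shorter argument loses nothing; conversely, the paper's maximum-point computation makes the mechanism ($\lambda\ge\beta\|u\|^2_{L^\infty}$ exactly at the peak of $u$) slightly more transparent.
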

\begin{proof}
Let us first assume that $V\in C^{\infty}(\overline{\Omega})$ with $V\ge 0$. In this case, we have by standard regularity arguments that $u \in C^2(\Omega)$ and consequently pointwise for all $x \in \Omega$
\begin{align*}
\lambda \, u(x) = - \tfrac{1}{2}\Delta u(x) + V(x) u(x) + \beta |u(x)|^2 u(x).
\end{align*}
Recalling that $u>0$ in $\Omega$ and $u=0$ on $\partial \Omega$ we know that $u$ has an interior maximum. Hence, evaluating the equation in the point $x^{\ast} \in \Omega$ where $u(x^{\ast})=\| u \|_{L^{\infty}(\Omega)}$, we have $u(x^{\ast})>0$ and $-\Delta u(x^{\ast})\ge 0$ and consequently
\begin{align*}
\lambda \, = \tfrac{1}{2}\tfrac{- \Delta u(x^{\ast})}{u(x^{\ast})} + V(x^{\ast}) + \beta |u(x^{\ast})|^2 \ge \beta \| u \|_{L^{\infty}(\Omega)}^2 
\quad
\Rightarrow \quad
0\le \tfrac{\beta}{\lambda} \| u \|_{L^{\infty}(\Omega)}^2 \le 1.
\end{align*}
Together with $u=0$ on $\partial \Omega$, we have proved equation \eqref{Linfty-theta-u} for smooth potentials. In the next step we extend the result by a density argument.\\[0.2em]
From now on, we consider a potential $V\in L^{\infty}(\Omega)$ which is non-negative almost everywhere and we let $(V_k)_{k\in \mathbb{N}} \subset C^{\infty}(\overline{\Omega})$ be an approximating sequence of non-negative potentials such that $\| V - V_k \|_{L^2(\Omega)} \rightarrow 0$ for $k\rightarrow \infty$. For $v\in H^1_0(\Omega)$, we define the corresponding energy functional by
$$
E_k(v) := \frac{1}{2} \int_{\Omega} \tfrac{1}{2}|\nabla v|^2 + V_k |v|^2 + \tfrac{\beta}{2} |v|^4 \dx
$$
and the corresponding ground state is
$$
\tilde{u}_k := \mbox{arg\hspace{1pt}min} \{ E_k(v) \, | \hspace{2pt} v\in H^1_0(\Omega), \, \| v\|_{L^2(\Omega)}=1 \}.  
$$
Since $E_k(v) \rightarrow E(v)$ for all $v \in H^1_0(\Omega)$, we can select an $L^2$-normalized sequence $(v_k)_{k \in \mathbb{N}} \subset H^1_0(\Omega)$ that converges strongly in $H^1(\Omega)$ to $u$. This implies
\begin{align}
\label{limsupEkuk}
\limsup_{k\rightarrow \infty} E_k(\tilde{u}_k) \le \lim_{k\rightarrow \infty} E_k(v_k) = E(u).
\end{align}
On the contrary, since $E$ is weakly lower semi-continuous and $\tilde{u}_k$ a bounded sequence in $H^1(\Omega)$, we also have
\begin{align}
\label{liminfEkuk}\liminf_{k\rightarrow \infty} E_k(\tilde{u}_k) \ge E(u).
\end{align}
Combining \eqref{limsupEkuk} and \eqref{liminfEkuk} we see that $\lim_{k\rightarrow \infty} E_k(\tilde{u}_k)$ exists with
\begin{align}
\label{limit-Ek-vk}\lim_{k\rightarrow \infty} E_k(\tilde{u}_k) = E(u).
\end{align}
Since $\tilde{u}_k$ is uniformly bounded in $H^1(\Omega)$, we conclude the existence of a subsequence that converges strongly in $L^2(\Omega)$ and weakly in $H^1(\Omega)$ to some limit $\tilde{u}$. However, thanks to \eqref{limit-Ek-vk} we know that the convergence in $H^1(\Omega)$ must be in fact strong and it holds $E(\tilde{u})=E(u)$. By uniqueness of a positive minimizer of $E$ (cf. Proposition \ref{proposition-minimization-problem}), we conclude that $u=\tilde{u}$ and the full sequence $\tilde{u}_k$ must converge to this limit. As a direct conclusion, we also have with the embedding $H^1(\Omega) \hookrightarrow L^{6}(\Omega)$ (for $d\le 3$) that
$$
\tilde{\lambda}_k = 2 E_k(\tilde{u}_k) + \tfrac{\beta}{2} \int_{\Omega} |\tilde{u}_k|^4 \dx \hspace{5pt} \overset{k \rightarrow \infty}{\longrightarrow} \hspace{5pt} 2 E(u) + \tfrac{\beta}{2} \int_{\Omega} |u|^4 \dx = \lambda.
$$
With this, we can use the strong convergence in the Euler-Lagrange equations
\begin{align*}
- \tfrac{1}{2}\Delta \tilde{u}_k +  V_k \tilde{u}_k  +  \beta |\tilde{u}_k|^2 \tilde{u}_k = \tilde{\lambda}_k \, \tilde{u}_k
\qquad \mbox{and}
\qquad
- \tfrac{1}{2}\Delta u +  V u  +  \beta |u|^2u  = \lambda \, u,
\end{align*}
to see that $\| \Delta(\tilde{u}_k  - u ) \|_{L^2(\Omega)}\rightarrow 0$. Since $u\in H^2(\Omega)$, we finally have with the Sobolev embedding $H^2(\Omega) \hookrightarrow L^{\infty}(\Omega)$ (for $d\le 3$) that
\begin{align*}
\| \tilde{u}_k  - u \|_{L^{\infty}(\Omega)}
\le C \| \tilde{u}_k  - u  \|_{H^2(\Omega)}
\le C \| \Delta(\tilde{u}_k  - u ) \|_{L^2(\Omega)} \rightarrow 0 \quad
\mbox{for } k\rightarrow 0.
\end{align*}
Since we already know from the first part of the proof that
\begin{align*}
\| 1 -2 \tfrac{\beta}{\tilde{\lambda}_k} |\tilde{u}_k|^2 \|_{L^{\infty}(\Omega)} = 1,
\end{align*}
we can now pass to the limit to obtain the desired result for $u$ itself. 
\end{proof}
Next, we make some initial considerations about the spectrum of $\phi^{\prime}(u)$.
\begin{lemma}
\label{lemma-2}
Assume \ref{A1}-\ref{A3} and let $u$ be the ground state. Recalling that 
$\phi^{\prime}(u): H^1_0(\Omega) \rightarrow H^1_0(\Omega)$, we consider the linear eigenvalue problem seeking $v_i \in H^1_0(\Omega)$ and $\mu_i \in \R$ such that 
\begin{align*}
\phi^{\prime}(u) v_i = \mu_i \, v_i.
\end{align*}
Then, for all eigenvalues $\mu_i \not= 0$ with eigenfunctions $v_i \in H^1_0(\Omega)$ it holds
$$
(v_i , u )_{L^2(\Omega)} = 0.
$$
\end{lemma}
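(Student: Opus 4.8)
The plan is to exploit the explicit formula for $\phi^{\prime}(u)$ from Conclusion \ref{frechet-derivatives-psi-phi-in-u} together with the fact that the first term and the second term of $\phi^{\prime}(u)h$ are ``orthogonal'' in a useful sense. Concretely, write
\begin{align*}
\phi^{\prime}(u) h = \lambda\, w_h - \lambda\, (w_h, u)_{L^2(\Omega)}\, u, \qquad \text{where } w_h := \mathcal{G}_{u}\bigl( (1 - 2\tfrac{\beta}{\lambda}|u|^2) h \bigr).
\end{align*}
The first step is simply to test the eigenvalue identity $\phi^{\prime}(u) v_i = \mu_i v_i$ with $u$ in the $L^2$ inner product. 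On the right-hand side this gives $\mu_i (v_i, u)_{L^2(\Omega)}$. On the left-hand side, $(\phi^{\prime}(u) v_i, u)_{L^2(\Omega)} = \lambda (w_{v_i}, u)_{L^2(\Omega)} - \lambda (w_{v_i}, u)_{L^2(\Omega)} \|u\|^2 = 0$, since $\|u\| = 1$. Hence $\mu_i (v_i, u)_{L^2(\Omega)} = 0$, and because $\mu_i \neq 0$ we conclude $(v_i, u)_{L^2(\Omega)} = 0$.

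There is essentially one routine verification to carry out carefully, namely that $w_{v_i} = \mathcal{G}_u\bigl((1-2\tfrac{\beta}{\lambda}|u|^2)v_i\bigr)$ is indeed a well-defined element of $H^1_0(\Omega)$ so that the pairing $(w_{v_i}, u)_{L^2(\Omega)}$ makes sense; this follows from the mapping properties of $\mathcal{G}_u$ and the boundedness of the weight $1-2\tfrac{\beta}{\lambda}|u|^2$ established in Lemma \ref{lemma-Linfty-theta-u} (or even just from $u \in L^\infty(\Omega)$ via Proposition \ref{proposition-minimization-problem}), exactly as in the proof of Lemma \ref{lemma-1}. With that in hand the computation above is unconditional.

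I do not expect any genuine obstacle here: the statement is a direct algebraic consequence of the structure of $\phi^{\prime}(u)$, where the subtracted rank-one term is precisely designed to kill the $u$-component. The only thing to be mildly careful about is not to divide by $\mu_i$ prematurely or to conflate the cases $\mu_i = 0$ and $\mu_i \neq 0$ — the conclusion genuinely uses $\mu_i \neq 0$. This lemma is the bridge that lets us restrict the spectral analysis of $\phi^{\prime}(u)$ to the subspace $V_u^{\perp}$, after which the weighted eigenvalue problem \eqref{weighted-evp} and the Courant--Fischer argument take over.
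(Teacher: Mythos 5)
Your proposal is correct and rests on the same key observation as the paper's proof, namely that the subtracted rank-one term in $\phi^{\prime}(u)h$ annihilates the $u$-component because $\|u\|=1$, so that pairing the eigenvalue identity with $u$ yields $\mu_i\,(v_i,u)_{L^2(\Omega)}=0$. The paper reaches the same conclusion slightly more indirectly by first applying $a_u(\cdot,w)$ to derive the variational identity $\tfrac{\mu_i}{\lambda}\,a_u(v_i,w)=(v_i,\theta_u[w-(u,w)_{L^2(\Omega)}u])_{L^2(\Omega)}$ (which it then reuses for Remark \ref{remark-on-mu-0} and the weighted eigenvalue problem) before setting $w=u$; your direct $L^2$-pairing is a legitimate shortcut for the lemma as stated.
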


\begin{remark}\label{remark-on-mu-0}
Note that all eigenvalues of $\phi^{\prime}(u)$ are indeed real. Also note that if $\mu_i=0$ is an eigenvalue of $\phi^{\prime}(u)$, then it is simple and the corresponding eigenfunction is given by $v_i=(1 - 2  \tfrac{\beta}{\lambda} \, |u|^2)^{-1}u$ (provided that this is an element of $H^1_0(\Omega)$). The latter statement will be a by-product of the proof.
\end{remark}

\begin{proof}[Proof of Lemma \ref{lemma-2}]
Let $\mu_i$ be an arbitrary eigenvalue with eigenfunction $v_i$, then $\phi^{\prime}(u) v_i =\mu_i \, v_i$ can be expressed using Conclusion \ref{frechet-derivatives-psi-phi-in-u} as
\begin{align*}
 \mathcal{G}_{u} ( \hspace{1pt} (1 - 2 \tfrac{\beta}{\lambda} \, |u|^2) v_i \hspace{1pt} ) - \,
( \, \mathcal{G}_{u} ( \hspace{1pt} (1 - 2  \tfrac{\beta}{\lambda} \, |u|^2) v_i \hspace{1pt} ) \, , u \, )_{L^2(\Omega)}\, u  = \tfrac{\mu_i}{\lambda} \, v_i.
\end{align*}
For brevity, we introduce the notation $\theta_u:=1 - 2 \tfrac{\beta}{\lambda} \, |u|^2$. Applying the bilinear form $a_u(\cdot,\cdot)$ on both sides of the equation for any $w \in H^1_0(\Omega)$ yields
\begin{align}
\label{proof-lemma-2-a}
a_u( \mathcal{G}_{u} ( \hspace{1pt} \theta_u v_i \hspace{1pt} ) , w ) - \, ( \, \mathcal{G}_{u} ( \hspace{1pt} \theta_u v_i \hspace{1pt} ) \, , u \, )_{L^2(\Omega)}\, a_u( u , w)  = \tfrac{\mu_i}{\lambda} \, a_u( v_i , w).
\end{align}
Using the definition of $\mathcal{G}_{u}$ we have
\begin{align}
\label{proof-lemma-2-b}
a_u( \mathcal{G}_{u} ( \hspace{1pt} \theta_u v_i \hspace{1pt} ) , w ) = (  \theta_u v_i \hspace{1pt}  , w )_{L^2(\Omega)}
\end{align}
and with the symmetry of $a_u(\cdot,\cdot)$ also
\begin{align}
\label{proof-lemma-2-c}
( \, \mathcal{G}_{u} ( \theta_u v_i ) \, , u \, )_{L^2(\Omega)}
= a_u( \mathcal{G}_{u}  u , \mathcal{G}_{u} ( \theta_u v_i)  )  = \lambda^{-1} (\theta_u v_i  , u )_{L^2(\Omega)}.
\end{align}
Plugging \eqref{proof-lemma-2-b} and \eqref{proof-lemma-2-c} into \eqref{proof-lemma-2-a} gives
\begin{align*}
 \tfrac{\mu_i}{\lambda} \, a_u( v_i , w) &= ( \theta_u v_i  , w )_{L^2(\Omega)} - \,
\lambda^{-1} ( \theta_u v_i , u )_{L^2(\Omega)}\, a_u( u , w) \\
&= (  \theta_u v_i , w )_{L^2(\Omega)} - \,
( \theta_u v_i , u )_{L^2(\Omega)}\, ( u , w)_{L^2(\Omega)} \\
&= ( v_i , \theta_u  [ w - ( u , w)_{L^2(\Omega)} u ] )_{L^2(\Omega)}.
\end{align*}
On the right hand side, we have the $L^2$-projection of $w \in H^1_0(\Omega)$ onto the $L^2$-orthogonal complement of $u$. Hence, selecting $w=u$, we obtain
\begin{align*}
0 = \tfrac{\mu_i}{\lambda} \, a_u( v_i , u) = \mu_i (v_i , u )_{L^2(\Omega)}. 
\end{align*}
We conclude that $ (v_i , u )_{L^2(\Omega)} =0$ whenever $\mu_i \not=0$, which proves the statement of the lemma. 

Using the previous findings, we can quickly prove the statements of Remark \ref{remark-on-mu-0}. For that, assume that $\mu_i=0$ and let $P^{\perp}(w) := w - ( u , w)_{L^2(\Omega)} u$, then we have
\begin{align*}
0 = ( v_i , \theta_u  P^{\perp}(w) )_{L^2(\Omega)} =  ( P^{\perp}(v_i \theta_u) ,  w )_{L^2(\Omega)}  
\qquad \mbox{for all } w\in H^1_0(\Omega).
\end{align*}
Consequently we obtain $P^{\perp}(v_i \theta_u) = 0$ and hence
\begin{align*}
\frac{v_i \theta_u}{\| v_i \theta_u \|}  = \frac{ (v_i \theta_u, u ) }{\| v_i \theta_u \|}  u = u.
\end{align*}
This shows that if $\mu_i=0$ is an eigenvalue (which is e.g. the case for $\beta=0$), then the only option for a corresponding eigenfunction is, up to normalization, $v_i = \theta_u^{-1}u$. This also implies that the spectrum of $\phi^{\prime}(u) $ is real, because we have
\begin{align*}
\mu_i = \frac{ a_u( \phi^{\prime}(u) v_i , v_i ) }{ a_u( v_i , v_i ) } = \frac{ ( (\lambda - 2  \beta |u|^2) v_i , v_i  )_{L^2(\Omega)} }{ a_u( v_i , v_i ) },
\end{align*} 
which is real for all eigenfunctions $v_i \in H^1_0(\Omega)$ with $(u,v_i)_{L^2(\Omega)}=0$.
\end{proof}
We are now ready to prove the first main result.
\begin{proof}[Proof of Theorem \ref{thm_conv_basic_inverse_iteration}]
We assume $\beta>0$ since  Theorem \ref{thm_conv_basic_inverse_iteration} is a classical result for $\beta=0$.
Let $V^{\perp}_u := \{ v \in H^1_0(\Omega) | \hspace{2pt} (v,u)_{L^2(\Omega)}=0\}$.
As the potential eigenvalue $\mu_i=0$ is irrelevant for the spectral radius, Lemma \ref{lemma-2} shows that it is sufficient to consider $\phi^{\prime}(u)v_i = \mu_i v_i$ on $V^{\perp}_u$. Together with Conclusion \ref{frechet-derivatives-psi-phi-in-u} this shows that
\begin{align*}
\sup_{i} \frac{|\mu_i|}{|\lambda|}
= \sup_{i} \frac{|( \mathcal{G}_{u} ( \hspace{1pt} (1 - 2 \tfrac{\beta}{\lambda} \, |u|^2) v_i \hspace{1pt} ) , v_i )_{L^2(\Omega)} |}{ \| v_i \|^2 }
= \sup_{i} \frac{ |( \hspace{1pt} (1 - 2 \tfrac{\beta}{\lambda} \, |u|^2) v_i , \mathcal{G}_{u}  v_i )_{L^2(\Omega)} |}{ \| v_i \|^2 }.
\end{align*}
Recall now that Lemma \ref{lemma-Linfty-theta-u} guarantees $\| 1 -2 \tfrac{\beta}{\lambda} |u|^2 \|_{L^{\infty}(\Omega)} = 1$ and that Proposition \ref{proposition-minimization-problem} and assumptions \ref{A2} and \ref{A4} ensure that the set of points $x \in \Omega$, where $|1 - 2 \tfrac{\beta}{\lambda} \, |u(x)|^2|=1$ is a null set in $\mathbb{R}^d$ (for $\beta>0$). In fact, $1 - 2 \tfrac{\beta}{\lambda} \, |u(x)|^2=1$ is only possible for $x \in \partial \Omega$ and $1 - 2 \tfrac{\beta}{\lambda} \, |u(x)|^2=-1$ only on sets of $\R^d$ with measure zero. 
To see the latter, assume that there exists an open ball $B_{\eps}(x_0) \subset \Omega$ such that $\beta |u(x)|^2=\lambda$ for all $x\in B_{\eps}(x_0)$. Then $u(x)= \sqrt{\tfrac{\lambda}{\beta}}>0$ is constant on $B_{\eps}(x_0)$. From the eigenvalue problem we obtain with $\Delta u(x)=0$ that $V(x)  \sqrt{\tfrac{\lambda}{\beta}} = 0$
for all $x\in B_{\eps}(x_0)$. For this to be fulfilled, we would require $V=0$ a.e. in $B_{\eps}(x_0)$, which is however a contradiction to \ref{A4}.
Considering now an eigenfunction $v_i$ such that $|\mu_i|$ becomes maximal, we conclude the existence of a constant $0<\delta=\delta(v_i,u) < 1$ with
$$
 |( \hspace{1pt} (1 - 2 \tfrac{\beta}{\lambda} \, |u|^2) v_i , \mathcal{G}_{u}  v_i )_{L^2(\Omega)} | \le (1-\delta)
 |( \hspace{1pt} |v_i| , |\mathcal{G}_{u}  v_i| )_{L^2(\Omega)}.$$
With the H\"older inequality we obtain
\begin{align*}
\sup_{i} \frac{|\mu_i|}{|\lambda|}
\le (1-\delta) \sup_{i} \frac{ \| \mathcal{G}_{u} v_i \| }{ \| v_i \| }
\le (1-\delta) \sup_{v \in V^{\perp}_u} \frac{\| \mathcal{G}_{u} v \| }{ \| v \| }.
\end{align*}
Since $\mathcal{G}_u$ is a linear, compact, self-adjoint and positive operator on $L^2(\Omega)$, the Courant--Fischer min-max principle guarantees that 
\begin{align*}
 \sup_{v \in V^{\perp}_u} \frac{\| \mathcal{G}_{u} v \| }{ \| v \| } = 
 \sup_{v \in V^{\perp}_u} \frac{( \mathcal{G}_{u} v , v) }{ \| v \|^2 }  
 = \lambda_2^{-1},
\end{align*}
where $\lambda_2^{-1}$ is the second largest eigenvalue of $ \mathcal{G}_{u}$, or respectively, $\lambda_2$ is the second smallest eigenvalue of $a_u ( u_i , v ) = \lambda_i \, ( u_i , v )_{L^2(\Omega)}$ (cf. \eqref{linearized-ev-problem}). Here we also used that $\lambda_1=\lambda$ is the smallest eigenvalue with corresponding eigenfunction $u_1=u$ (see Proposition \ref{proposition-linearized-problem}). 
We conclude that the spectral radius $\rho$ of $\phi^{\prime}(u)$ can be bounded by
\begin{align*}
\rho = \sup_{i}  |\mu_i| \le (1-\delta) \tfrac{\lambda_1}{\lambda_2} < 1.
\end{align*}
The Ostrowski theorem, i.e., Proposition \ref{prop-ostrowski}, finishes the proof if we select $\eps<\delta \tfrac{\lambda_1}{\lambda_2}$.
\end{proof}

\section{Convergence rates for GFDN iterations}
\label{section-GFDN}

A popular alternative formulation of the inverse iteration is the so-called {\it discrete normalized gradient flow} (GFDN) which was first systematically studied for the Gross-Pitaevskii equation by Bao and Du \cite{BaD04} and has received a lot of attention since then. The derivation presented in \cite{BaD04} is based on a (continuous) $L^2$-gradient flow with discrete normalization, which seeks $z(\cdot,t) \in H^1_0(\Omega)$ such that
\begin{align}
\label{CGFDN}
\partial_t z(x,t) &= - E^{\prime}(\hspace{1pt}z(x,t)\hspace{1pt}) \quad \mbox{for } x\in \Omega \hspace{4pt}\mbox{ and }\hspace{4pt} t\in(t_n,t_{n+1}),
\end{align}
where $(t_n,t_{n+1}) \subset \R_{>0}$ are given open time intervals and where $z(x,t)$ is $L^2$-normalized at all discrete times $t_n$ via
\begin{align*}
z(x,t_{n+1}) := \lim_{0<\delta \to 0 } \frac{ z(x,t_{n+1} - \delta )}{\| z(x,t_{n+1} - \delta) \|}.
\end{align*}
Hence, the \quotes{initial values} for each step \eqref{CGFDN} are given by $z(x,t_n)$. Provided that $z(x,0)\in H^1_0(\Omega)$ is chosen appropriately (e.g. as a strictly positive function), it can be expected that $z(\cdot,t) \rightarrow u$ for $t\rightarrow \infty$, where $u$ is again the positive ground state.

Choosing a uniform time step size $\tau>0$ with $t_{n+1} = t_n+ \tau$, calculating $E^{\prime}(v)$ according to \eqref{Eprime} and discretizing  \eqref{CGFDN} with a semi-explicit backward Euler method, we obtain the GFDN iterations with
$$
\tfrac{1}{\tau}(u^{n+1} - u^n , v )_{L^2(\Omega)} = - a_{u^n}( u^{n+1} , v ) \qquad \mbox{for all } v\in H^1_0(\Omega).
$$
Using the previous notation $\mathcal{A}_v w := a_v(w , \cdot )$ and $\mathcal{I} w :=(w,\cdot)_{L^2(\Omega)}$, we can present the GFDN iterations compactly in the following definition.
\begin{definition}[GFDN iteration for the GPE]
Given an initial value $u^0 \in H^1_0(\Omega)$ with $\| u^0 \|=1$, the GFDN iterations are recursively given by
\begin{align}
\label{GFDM-iterations}
u^{n+1} := \frac{( \mathcal{I} + \tau \mathcal{A}_{u^n})^{-1} \mathcal{I} u^n }{\| ( \mathcal{I} + \tau \mathcal{A}_{u^n})^{-1} \mathcal{I} u^n \|}
 \qquad \mbox{for } n\in \mathbb{N}.
\end{align}
\end{definition}
Since this is just a scaled version of the basic inverse iteration \eqref{basic-inverse-iterations} together with a positive shift, our previous considerations are applicable and lead to the following local convergence result.
\begin{theorem}[Local convergence of the GFDN iterations]
\label{thm_conv_DNGF_iteration}
Assume \ref{A1}-\ref{A4}, let $u \in H^1_0(\Omega)$ denote the ground state to the GPE \eqref{groundstate-ev-problem} and let $u^n\in H^1_0(\Omega)$ denote the GFDN iterations given by \eqref{GFDM-iterations} for an arbitrary step size $\tau>0$. Then there is a neighborhood $S \subset H^1_0(\Omega)$ of $u$ and a constant $C>0$ such that
$$
\|  u^n - u \|_{H^1(\Omega)} \le C \hspace{2pt} |\tfrac{1+ \tau \lambda_1}{1 + \tau \lambda_2} |^n \hspace{2pt} \| u^0 - u \|_{H^1(\Omega)}
$$
for all starting values $u^0 \in S$
 and $n\ge  1$. Again, $\lambda_1$ and $\lambda_2$ are the first and the second eigenvalue of problem \eqref{linearized-ev-problem}.
\end{theorem}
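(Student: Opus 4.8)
The plan is to reuse the entire machinery developed for Theorem~\ref{thm_conv_basic_inverse_iteration} almost verbatim, the only new ingredient being a harmless positive shift. First I would write the GFDN update \eqref{GFDM-iterations} as a fixed point iteration $u^{n+1}=\phi_\tau(u^n)$. Since $\mathcal{I}+\tau\mathcal{A}_v = \tau(\tfrac1\tau\mathcal{I}+\mathcal{A}_v)$ and the scalar factor cancels in the normalization, we have $\phi_\tau(v)=\mathcal{G}_v^\tau v/\|\mathcal{G}_v^\tau v\|$ with $\mathcal{G}_v^\tau := (\mathcal{A}_v^\tau)^{-1}\mathcal{I}$, where $\mathcal{A}_v^\tau$ is the isomorphism induced by the coercive, continuous and symmetric bilinear form $a_v^\tau(w,z):=a_v(w,z)+\tfrac1\tau (w,z)_{L^2(\Omega)}$. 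The decisive observation is that $a_v^\tau$ depends on $v$ only through the term $\beta(|v|^2 w,z)_{L^2(\Omega)}$, exactly as $a_v$ does; the added mass term is $v$-independent. Hence all derivative computations in Lemma~\ref{lemma-1} carry over with $\mathcal{G}$ replaced by $\mathcal{G}^\tau$: one obtains $\psi_\tau'(v)h=\mathcal{G}_v^\tau(h-2\beta\, v\,\psi_\tau(v)\,h)$ for $\psi_\tau(v):=\mathcal{G}_v^\tau v$, together with the corresponding quotient-rule formula for $\phi_\tau'(v)$, and the $H^1$-boundedness estimates are unchanged.

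Next I would evaluate the derivative at the ground state $u$, as in Conclusion~\ref{frechet-derivatives-psi-phi-in-u}. From $a_u(u,\cdot)=\lambda(u,\cdot)_{L^2(\Omega)}$ one gets $\mathcal{A}_u^\tau u=(\lambda+\tfrac1\tau)\mathcal{I}u$, hence $\mathcal{G}_u^\tau u=(\lambda+\tfrac1\tau)^{-1}u$ and $\|\mathcal{G}_u^\tau u\|=(\lambda+\tfrac1\tau)^{-1}$ using $\|u\|=1$. This yields $\phi_\tau'(u)h=(\lambda+\tfrac1\tau)\bigl[\mathcal{G}_u^\tau(\theta_u^\tau h)-(\mathcal{G}_u^\tau(\theta_u^\tau h),u)_{L^2(\Omega)}\,u\bigr]$ with the weighting function $\theta_u^\tau:=1-\tfrac{2\beta}{\lambda+1/\tau}|u|^2$. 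The argument of Lemma~\ref{lemma-2} and Remark~\ref{remark-on-mu-0} then applies unchanged: every nonzero eigenvalue $\mu_i$ of $\phi_\tau'(u)$ has an eigenfunction $v_i\in V_u^\perp:=\{v\in H^1_0(\Omega)\,|\,(v,u)_{L^2(\Omega)}=0\}$, the spectrum is real, and $\tfrac{\mu_i}{\lambda+1/\tau}=(\theta_u^\tau v_i,\mathcal{G}_u^\tau v_i)_{L^2(\Omega)}/\|v_i\|^2$.

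It then remains to bound the spectral radius. The pointwise estimate $0\le \tfrac{\beta}{\lambda}|u|^2\le 1$ established inside the proof of Lemma~\ref{lemma-Linfty-theta-u} immediately gives $0\le \tfrac{\beta}{\lambda+1/\tau}|u|^2\le \tfrac{\lambda}{\lambda+1/\tau}<1$, so $\|\theta_u^\tau\|_{L^\infty(\Omega)}=1$, with the value $1$ approached only where $u$ vanishes, i.e. on the null set $\partial\Omega$, while the value $-1$ is in fact never attained (so assumption \ref{A4} is actually redundant here, but it does no harm to keep it). Picking an eigenfunction $v_i$ that realizes $\max_i|\mu_i|$, the same reasoning as in the proof of Theorem~\ref{thm_conv_basic_inverse_iteration} produces a constant $0<\delta<1$ with $|(\theta_u^\tau v_i,\mathcal{G}_u^\tau v_i)_{L^2(\Omega)}|\le (1-\delta)(|v_i|,|\mathcal{G}_u^\tau v_i|)_{L^2(\Omega)}$, and H\"older together with the Courant--Fischer min-max principle gives $\sup_{v\in V_u^\perp}\|\mathcal{G}_u^\tau v\|/\|v\|=(\lambda_2+\tfrac1\tau)^{-1}$; here one uses that the shifted spectral problem $a_u^\tau(u_i,v)=(\lambda_i+\tfrac1\tau)(u_i,v)_{L^2(\Omega)}$ has the same eigenfunctions as \eqref{linearized-ev-problem} with eigenvalues merely shifted by $\tfrac1\tau$, so $u_1=u$ spans the top eigenspace of $\mathcal{G}_u^\tau$. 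Combining these bounds, $\rho(\phi_\tau'(u))\le (1-\delta)\tfrac{\lambda+1/\tau}{\lambda_2+1/\tau}=(1-\delta)\tfrac{1+\tau\lambda_1}{1+\tau\lambda_2}<1$, and the Ostrowski theorem (Proposition~\ref{prop-ostrowski}) applied with any $\eps<\delta\,\tfrac{1+\tau\lambda_1}{1+\tau\lambda_2}$ finishes the proof.

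I do not expect a genuine obstacle. The only points that require care are (i) verifying that the $v$-dependence of the shifted form $a_v^\tau$ is identical to that of $a_v$, so that the Fr\'echet-derivative formulas transfer without modification, and (ii) checking that the shift $\tfrac1\tau$ propagates consistently both through the eigenvalues of $\mathcal{G}_u^\tau$ and through the normalization constant $\lambda+\tfrac1\tau$ in $\phi_\tau'(u)$ --- it is precisely this bookkeeping that turns the rate $\tfrac{\lambda_1}{\lambda_2}$ of Theorem~\ref{thm_conv_basic_inverse_iteration} into $\tfrac{1+\tau\lambda_1}{1+\tau\lambda_2}$. As in Remark~\ref{remark-sharper-rates}, the argument in fact yields the sharper rate $|\mu_1^\tau|$, the largest-magnitude eigenvalue of the weighted linear problem $(v_i,(\lambda+\tfrac1\tau-2\beta|u|^2)w)_{L^2(\Omega)}=\mu_i^\tau a_u^\tau(v_i,w)$ on $V_u^\perp$.
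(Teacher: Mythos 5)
Your proposal is correct and follows essentially the same route as the paper's proof: both recast the GFDN update as a fixed-point iteration governed by a shifted bilinear form (your $a_v+\tfrac{1}{\tau}(\cdot,\cdot)_{L^2(\Omega)}$ is just $\tau^{-1}$ times the paper's $\tau a_v+(\cdot,\cdot)_{L^2(\Omega)}$, so all eigenvalue ratios and the weighting function $1-\tfrac{2\beta\tau}{1+\tau\lambda}|u|^2$ agree), then bound that weighting function in $L^{\infty}$ via Lemma \ref{lemma-Linfty-theta-u} and conclude with Courant--Fischer and the Ostrowski theorem. Your side observation that the value $-1$ is never attained by the shifted weighting function for finite $\tau$, so that \ref{A4} is not actually needed for this theorem, is correct and consistent with the paper's remark that equality holds only on $\partial\Omega$.
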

Before proving the theorem, let us quickly discuss the result. Obviously, the linear convergence rates degenerate to $1$ for $\tau\rightarrow 0$. Hence, selecting $\tau$ small is not a good idea. On the other hand, we have
\begin{align*}
\lim_{\tau \rightarrow \infty} \frac{1+ \tau \lambda_1}{1 + \tau \lambda_2} = \frac{\lambda_1}{\lambda_2},
\end{align*}
i.e., we recover the convergence rates for the basic inverse iterations when $\tau$ is chosen sufficiently large. In particular, the rates improve with increasing $\tau$, which is consistent with the empirical observation \cite{HeP20} that the GFDN works best when $\tau$ is selected as large as possible (but small enough to prevent an observable influence from rounding errors). {\it Asymptotically}, the GFDN cannot be expected to perform better than the basic inverse iteration. However, in preasymptotic regimes away from $u$ we cannot make any general predictions and the GFDN can perform better for suitable values of $\tau$, depending on the considered setting.

We shall now prove Theorem \ref{thm_conv_DNGF_iteration} by briefly sketching the changes compared to the previous setting.

\begin{proof}[Proof of Theorem \ref{thm_conv_DNGF_iteration}]
We proceed as in the proof of Theorem \ref{thm_conv_basic_inverse_iteration}, with the difference that we define $\mathcal{G}_{v,\tau}w :=( \mathcal{I} + \tau \mathcal{A}_{v})^{-1} \mathcal{I} w$ and $\phi : H^1_0(\Omega) \rightarrow H^1_0(\Omega)$ by
\begin{align*}
\phi(v) := \frac{\mathcal{G}_{v,\tau}v}{\| \mathcal{G}_{v,\tau}v \|}.
\end{align*}
Again, we need to estimate the spectral radius of $\phi^{\prime}(u)$. Since the ground state $u \in H^1_0(\Omega)$ of \eqref{groundstate-ev-problem} is also the ground state eigenfunction to the scaled and shifted eigenvalue problem given by
\begin{align*}
a_{u,\tau}(u , v) := \tau a_u( u , v ) + (u,v)_{L^2(\Omega)} = (1+ \tau \lambda) \, ( u , v)_{L^2(\Omega)},
\end{align*}
we can repeat the arguments for the modified bilinear form $a_{u,\tau}(\cdot , \cdot)$ and with the corresponding ground state eigenvalue $(1+ \tau \lambda)$. We find that the eigenvalues $\mu_i \in \mathbb{R}$ and eigenfunctions $v_i \in H^1_0(\Omega)$ to $\phi^{\prime}(u) v_i =\mu_i \, v_i$ can be expressed as
\begin{align*}
 \mathcal{G}_{u,\tau} ( \hspace{1pt} (1 - 2 \tfrac{\tau \beta}{1+\tau \lambda} \, |u|^2) v_i \hspace{1pt} ) - \,
( \, \mathcal{G}_{u,\tau} ( \hspace{1pt} (1 - 2  \tfrac{\tau \beta}{1+ \tau \lambda} \, |u|^2) v_i \hspace{1pt} ) \, , u \, )_{L^2(\Omega)}\, u  = \tfrac{\mu_i}{1+ \tau \lambda} \, v_i.
\end{align*}
As before, we find that $\mu_i (v_i , u )_{L^2(\Omega)} = 0 $ and consequently
\begin{align*}
 (v_i , u )_{L^2(\Omega)} = 0 \hspace{5pt} \mbox{for all } \mu_i \not=0.
\end{align*}
Recalling $V^{\perp}_u = \{ v \in H^1_0(\Omega) | \hspace{2pt} (v,u)_{L^2(\Omega)}=0\}$, we obtain
\begin{align*}
\sup_{i} \frac{|\mu_i|}{|1+\tau \lambda|}
&= \sup_{i} \frac{ |( \hspace{1pt} (1 - 2 \tfrac{\beta \tau}{1+\tau \lambda} \, |u|^2) v_i , \mathcal{G}_{u,\tau}  v_i )_{L^2(\Omega)} |}{ \| v_i \|^2 }  \\
&\le \| 1 - 2 \tfrac{\beta \tau}{1+\tau \lambda} \, |u|^2 \|_{L^{\infty}(\Omega)} \sup_{v \in V^{\perp}_u}  \frac{ (\mathcal{G}_{u,\tau} v ,  v )_{L^2(\Omega)} }{ \| v \|^2 } \\
&\le \| 1 - 2 \tfrac{\beta\tau}{1+\tau \lambda} \, |u|^2 \|_{L^{\infty}(\Omega)} |1+ \tau \lambda_2|^{-1}.
\end{align*}
It remains to show that $  \| 1 - 2 \tfrac{\beta\tau}{1+\tau \lambda} \, |u|^2 \|_{L^{\infty}(\Omega)}=1$ to apply the Ostrowski theorem. For that we note that (for fixed $x\in \overline{\Omega}$)
\begin{align*}
g(\tau) := \tfrac{ 2 \beta \tau }{1+\tau \lambda} |u(x)|^2 
\end{align*}
is a monotonically increasing function with
\begin{align*}
\lim_{\tau \rightarrow 0} g(\tau) = 0 \qquad \mbox{and} \qquad \lim_{\tau \rightarrow \infty} g(\tau) = \tfrac{2\beta}{\lambda}  |u(x)|^2 \overset{\eqref{Linfty-theta-u}}{\le} 2.
\end{align*}
Consequently $| 1 - 2 \tfrac{\beta\tau}{1+\tau \lambda} \, |u(x)|^2 | \le 1$, where equality is obtained for $x \in \partial \Omega$. We conclude that 
$|\mu_i| \le \tfrac{1+ \tau \lambda_2}{1+ \tau \lambda_1}$ and therefore Proposition \ref{prop-ostrowski} finishes the proof. The $\eps$-dependency that formally enters through the application of the Ostrowski theorem can be removed as in the proof of Theorem \ref{thm_conv_basic_inverse_iteration}.
\end{proof}

\section{Convergence rates for iterations with shift}
\label{section-inverse-iteration-with-shift}

The convergence of the inverse iterations can be significantly amplified in linear settings, by introducing a spectral shift $\sigma$ that leads to rates of the form $\tfrac{|\lambda_1 - \sigma|}{|\lambda_j -\sigma|}$ for some $j\not= 1$, i.e., the closer $\sigma$ is to $\lambda_1$, the faster the convergence of the iterations. However, it was empirically observed that the generalized inverse iteration does not react favourably to spectral shifts when applied to the Gross--Pitaevskii eigenvector problem (cf. \cite{JarKM14,AHP21NumMath}). Our analysis now reveals why this is the case. For that we start with formulating the generalized inverse iteration with shift.
\begin{definition}[Shifted inverse iteration for the GPE]
Let $\sigma \in \R \setminus \{ \lambda \} $ be a shift parameter such that $\mathcal{A}_{u} - \sigma  \mathcal{I}$ has a bounded inverse and such that all of the iterates below are well-defined. 
Given an initial value $u^0 \in H^1_0(\Omega)$ with $\| u^0 \|=1$, the inverse iterations with shift are recursively given by
\begin{align}
\label{shifted-inverse-iterations}
u^{n+1} := \frac{(\mathcal{A}_{u^n} - \sigma \,  \mathcal{I} )^{-1} \mathcal{I} u^n }{\| (\mathcal{A}_{u^n} - \sigma \,  \mathcal{I})^{-1} \mathcal{I} u^n \|}
 \qquad \mbox{for } n\in \mathbb{N}.
\end{align}
\end{definition}
Let us consider the  corresponding fixed-point function $\phi : H^1_0(\Omega) \rightarrow H^1_0(\Omega)$ given by
\begin{align*}
\phi(v):= \frac{(\mathcal{A}_{v} - \sigma \,  \mathcal{I} )^{-1} \mathcal{I} v }{\| (\mathcal{A}_{v} - \sigma \,  \mathcal{I})^{-1} \mathcal{I} v \|}.
\end{align*}
As the convergence rate is given by the spectral radius $\rho$ of $\phi^{\prime}(u)$, we investigate the eigenvalue problem seeking $\mu_i \in \R$ and $v_i \in H^1_0(\Omega)$ with
$$
\phi^{\prime}(u) v_i = \mu_i \, v_i.
$$
 Proceeding as before, we find that we can characterise the spectral radius by
 \begin{align*}
 \rho = \sup_{i} |\mu_i| = |\lambda - \sigma|  \sup_{i} \frac{ |( \hspace{1pt} (1 - 2 \tfrac{\beta}{\lambda-\sigma} \, |u|^2) v_i , (\mathcal{A}_{u} - \sigma \,  \mathcal{I} )^{-1} \mathcal{I}   v_i )_{L^2(\Omega)} |}{ \| v_i \|^2 }.
 \end{align*}
The apparent issue with this characterization is that when $\sigma$ is chosen such that $|\lambda-\sigma|$ is close to zero (as usually desired for a suitable shift and accelerated convergence), then the weighting function $(1 - 2 \tfrac{\beta}{\lambda-\sigma} \, |u|^2)$ in the eigenvalue problem is exploding. To make the issue more clear: if we would use the same arguments as for the basic inverse iteration and the GFDN, then we would end up with the estimate
 \begin{align*}
 \rho \le  \frac{|\lambda - \sigma|}{|\lambda_j - \sigma|} \,  \| 1 - 2 \tfrac{\beta}{\lambda -\sigma} \, |u|^2 \|_{L^{\infty}(\Omega)},
 \end{align*}
 where $\lambda_j \not= \lambda = \lambda_1$ is the eigenvalue of problem \eqref{linearized-ev-problem} that is closest to $\sigma$ (aside from $\lambda_1$ itself). As $\| 1 - 2 \tfrac{\beta}{\lambda-\sigma} \, |u|^2 \|_{L^{\infty}(\Omega)} \lesssim |\lambda-\sigma|^{-1}$ for $ |\lambda-\sigma|\rightarrow 0$, we see that $\rho$ is expected to behave as $|\lambda_j - \sigma|^{-1}$, which can easily become larger than $1$ and all convergence is lost. Hence, the size of the function $1 - 2 \tfrac{\beta}{\lambda-\sigma} \, |u|^2$ is an essential ingredient to ensure the convergence of the generalized inverse iterations. By shifting too strong in the direction of $\lambda$ we lose control over the weighting function and face a potential blow-up. This gives an analytical justification for the previously numerically observed phenomenon that the shifted inverse iterations do not work well (or at all) for the GPE  \cite{AHP21NumMath}. A strategy for how this can be fixed with an alternative approach (J-method) was proposed in \cite{JarKM14}.

\section{Convergence rates for iterations with damping}
\label{section-inverse-iteration-with-damping}
In this section we investigate a damped version of the inverse iteration that can be used to ensure global convergence to the ground state. An ad-hoc formulation of such a damped iteration would seek a damping parameter $\tau_n\in (0,1]$ such that the iterations
\begin{align*}
u^{n+1} := \frac{ (1-\tau_n) u^n + \tau_n \mathcal{G}_{u^n} (u^{n})}{\| (1-\tau_n) u^n + \tau_n  \mathcal{G}_{u^n} (u^{n}) \|} \end{align*}
are converging for any starting value. It can be expected that the above iterations are again locally converging to $u$ for any fixed $\tau_n=\tau \in (0,1]$ with (at least) the rate $|1-\tau|+\tau \tfrac{\lambda_1}{\lambda_2}$. However, the above ad-hoc method is unfortunately not sufficient to prove indeed {\it global} convergence and a slight modification is necessary. In this section we will present a suitable damping strategy that was first suggested in \cite{HeP20} and which can be interpreted as a gradient method in the context of Riemannian optimization where the inner product changes in each iteration. We shall recall the global convergence result proved in \cite{HeP20} and then present our final main result, which establishes explicit asymptotic rates for the convergence.

\subsection{Discrete projected Sobolev gradient flow and its convergence}
The original derivation presented in \cite{HeP20} is based on a particular Sobolev-gradient flow on the $L^2$-sphere in $H^1_0(\Omega)$, i.e., on the manifold $\mathbb{S}:=\{ v\in H^1_0(\Omega) | \hspace{3pt} \| v \|=1 \} $. Recalling that $\mathcal{G}_{v} v =\mathcal{A}_v^{-1} \mathcal{I}$, the gradient flow seeks $z \in C^1([0,\infty);\mathbb{S})$ with $z(0)=u_0 \in \mathbb{S}$ such that, for all $t>0$,
\begin{align}
\label{sobolev-flow-az}
z^{\prime}(t) = - z(t) +  \gamma(z(t)) \, 
\mathcal{G}_{z(t)} z(t), \qquad \mbox{where }
\gamma(z) :=  \frac{(z,z)_{L^2(\Omega)}}{a_{z}(\mathcal{G}_{z} z ,\mathcal{G}_{z} z)} >0.
\end{align}
Global well-posedness of $z$ was established in \cite[Theorem 3.2]{HeP20}.
To convince ourselves that this is a reasonable gradient flow, let us exemplarily sketch that the flow does not leave the sphere and that it is energy diminishing with respect to the Gross-Pitaevskii functional $E$ given by \eqref{definition-energy-functional}. For the first property, we compute
\begin{align*}
\tfrac{1}{2} \tfrac{d}{dt} \| z(t) \|^2
&=  (z^{\prime}(t), z(t) )_{L^2(\Omega)} \overset{\eqref{sobolev-flow-az}}{=} - ( z(t) , z(t) )_{L^2(\Omega)} + \gamma(z(t)) \, ( \mathcal{G}_{z(t)} z(t) , z(t) )_{L^2(\Omega)} \\
&=- ( z(t) , z(t) )_{L^2(\Omega)} + \gamma(z(t)) \hspace{2pt} a_{z(t)} ( \mathcal{G}_{z(t) } z(t) ,\mathcal{G}_{z(t)} z(t) )_{L^2(\Omega)} \overset{\eqref{sobolev-flow-az}}{=} 0.
\end{align*}
Hence, $\| z(t) \|$ is constant in time. In other words, starting from a point $u_0$ on the sphere $\mathbb{S}$, the gradient flow $z$ will never leave that sphere. This ensures that any limit point $u^{\ast}$ of $z(t)$ fulfills the normalization constraint $\| u^{\ast} \|=1$.

For verify that the energy is diminished, we apply the bilinear form $a_{z(t)}(\cdot,\cdot)$ to equation \eqref{sobolev-flow-az} and use $z^{\prime}(t) \in H^1_0(\Omega)$ as a test function. This yields
\begin{align*}
0 &\le a_{z(t)}( z^{\prime}(t) , z^{\prime}(t) ) = - a_{z(t)}( z(t) , z^{\prime}(t) )+ \gamma(z(t)) \, a_{z(t)}( \mathcal{G}_{z(t)} z(t) , z^{\prime}(t) )\\
&= - a_{z(t)}( z(t) , z^{\prime}(t) )+ \gamma(z(t)) \, ( z(t) , z^{\prime}(t) )_{L^2(\Omega)} = - \tfrac{d}{dt} E(\hspace{1pt}z(t)\hspace{1pt}).
\end{align*}
In the last step we used that $\langle E'(z) , z' \rangle = a_z(z,z')$ and $(z,z^{\prime})_{L^2(\Omega)}=0$. The calculation shows that $E(z(t))$ is monotonically decreasing with $t$ and we can expect convergence to a minimizer (or, in general, critical point) of $E$. Even more, it was shown in \cite{HeP20} that if the limit point is the ground state $u$, then the rate of convergence can be explicitly stated: for all $0<\eps \le 1$,  there exists a constant $c_{\eps}>0$ and a finite time $0<t_{\eps}<\infty$, such that for all $t\ge t_{\eps}$
\begin{align}
\label{convergence-grad-flow}
\| z(t) - u \|_{H^1(\Omega)} \le \hspace{2pt} c_{\eps} \operatorname{exp}\left(-\left(1 - \tfrac{\lambda_{1}}{\lambda_2}  - \eps \right)t\right).
\end{align}
Here, $\lambda_1$ and $\lambda_2$ are again given by \eqref{linearized-ev-problem}. Hence, the above convergence result for $z(t)$ has the same flavour as the local convergence result in Theorem \ref{thm_conv_basic_inverse_iteration} for the inverse iteration for the GPE. We note however that the proof of the rates \eqref{convergence-grad-flow} is very different to the proof of the convergence rates for the inverse iteration presented in this paper.

The damped version of the inverse iteration is now obtained by discretizing \eqref{sobolev-flow-az} with a forward Euler method, which is justified due to the bounded spectrum of the compact operator $\mathcal{G}_{v}$. 
Note that an alternative backward Euler discretization of \eqref{sobolev-flow-az} would be highly unfeasible as this would lead to nonlinear iterations with a computationally complicated structure.

With $z^{\prime}(t^n)\approx \tfrac{z^{n+1}-z^n}{\tau_n}$, we obtain the preliminary iterations
\begin{align*}
 \tfrac{z^{n+1}-z^n}{\tau_n} = - z^n +  \gamma(z^n) \, 
\mathcal{G}_{z^n} z^n, \qquad \mbox{or equivalently } \hspace{5pt} z^{n+1} = (1-\tau_n)z^n + \tau_n \gamma(z^n) \, 
\mathcal{G}_{z^n} z^n.
\end{align*}
As the $L^2$-normalization is lost in the discretization, we renormalize after each time step and define $u^{n+1}:=\tfrac{z^{n+1}}{\| z^{n+1} \|}$. Then in the next time step, $z^{n+1}$ needs to be replaced by $u^{n+1}$. In the final formulation of the method, we also use that $\gamma(v) = \frac{(v,v)_{L^2(\Omega)}}{a_{v}(\mathcal{G}_{v} v ,\mathcal{G}_{v} v)}= (\mathcal{G}_{v} v ,v)_{L^2(\Omega)}^{-1} $ for all $v\in \mathbb{S}$. This will later simplify the computation of the corresponding Fr\'echet derivative.

In summary, we obtain the following numerical method as a damped inverse iteration.
\begin{definition}[Damped inverse iteration for the GPE]
Given an initial value $u^0 \in H^1_0(\Omega)$ with $\| u^0 \|=1$ and given a sequence of damping parameters $(\tau_n)_{n\in \mathbb{N}}$ with $0<\tau_n <2$, the iterations are recursively given by
\begin{align}
\label{damped-inverse-iterations}
u^{n+1} := \frac{ (1-\tau_n) u^n + \tau_n \, \gamma(u^n) \, \mathcal{G}_{u^n} (u^{n})}{\|  (1-\tau_n) u^n + \tau_n \, \gamma(u^n) \, \mathcal{G}_{u^n} (u^{n}) \|} \qquad \mbox{for } n\in \mathbb{N}
\end{align}
where
\begin{align*}
\gamma(v) = (\mathcal{G}_{v} v ,v)_{L^2(\Omega)}^{-1} >0.
\end{align*}
Typically, the damping parameter $\tau_n\in (0,2)$ is chosen adaptively via line search such that
\begin{align}
\label{optimal-tau-n}
\tau_n = \mbox{\rm arg\hspace{2pt}min} \left\{ E\hspace{-1pt}\left(
\frac{ (1-\tau_n) u^n + \tau_n \, \gamma(u^n) \, \mathcal{G}_{u^n} (u^{n})}{\|  (1-\tau_n) u^n + \tau_n \, \gamma(u^n) \, \mathcal{G}_{u^n} (u^{n}) \|}
\right) \hspace{2pt} | \hspace{4pt} 0<\tau<2 \right\},
\end{align}
where $E$ is the usual GP energy functional given by \eqref{definition-energy-functional}. Details on how to efficiently realize \eqref{optimal-tau-n} in an implementation are given in \cite{HeP20,APS21}. 
\end{definition}
Note that for the uniform choice $\tau_n = \tau=1$, we recover from \eqref{damped-inverse-iterations} the basic inverse iteration as defined in \eqref{basic-inverse-iterations}.

The following result shows that the inverse iteration with adaptive damping converges globally to the positive ground state, if the initial value is selected non-negative. The proof is found in \cite[Theorem 5.1]{HeP20}.
\begin{proposition}[Global convergence to the ground state]
We consider the damped inverse iterations \eqref{damped-inverse-iterations} with a sequence of parameters $(\tau_n)_{n \in \mathbb{N}}$ that fulfils
\begin{align*}
0 < \tau_{\mbox{\rm\tiny min}} \le \tau_n \le \tau_{\mbox{\rm\tiny max}} < 2,
\end{align*}
for some fixed lower and upper bounds $\tau_{\mbox{\rm\tiny min}}$ and $\tau_{\mbox{\rm\tiny max}}$. Under assumptions \ref{A1}-\ref{A3}, there exists a suitable value for $\tau_{\mbox{\rm\tiny max}}$, such that for any starting value $u^0 \in H^1_0(\Omega)$ with $\| u^0 \|=1$ and $u^{0}\ge 0$ it holds
$$
\lim_{n\rightarrow \infty} \| u^n -  u\|_{H^1(\Omega)} = 0,
$$
where $u \in H^1_0(\Omega)$ is again the unique positive ground state given by \eqref{def-ground-state} and \eqref{groundstate-ev-problem}.
\end{proposition}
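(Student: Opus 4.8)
The plan is to view \eqref{damped-inverse-iterations} as a retraction-based explicit Euler discretization of the energy-diminishing Sobolev gradient flow \eqref{sobolev-flow-az} and to run the classical scheme ``quantitative energy descent $\Rightarrow$ the discrete gradient vanishes along the sequence $\Rightarrow$ compactness $\Rightarrow$ identification of the limit''. Two structural observations come first. Writing $d^n := \gamma(u^n)\,\mathcal{G}_{u^n}u^n - u^n$, one has $(u^n,d^n)_{L^2(\Omega)}=\gamma(u^n)(\mathcal{G}_{u^n}u^n,u^n)_{L^2(\Omega)}-1=0$ on $\mathbb{S}$, so the numerator in \eqref{damped-inverse-iterations} equals $u^n+\tau_n d^n$ with $\|u^n+\tau_n d^n\|^2 = 1+\tau_n^2\|d^n\|^2\ge 1$; in particular every iterate is well defined, $\|u^{n+1}\|=1$, and $u^{n+1}$ is simply the $L^2$-projection onto $\mathbb{S}$ of a step of length $\tau_n$ taken from $u^n$ in the direction $d^n$. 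Moreover $d^n$ is the negative Riemannian gradient of $E$ at $u^n$ in the $a_{u^n}$-metric: exactly as in the computations following \eqref{sobolev-flow-az}, using $\langle E'(v),w\rangle = a_v(v,w)$ and $(u^n,d^n)_{L^2(\Omega)}=0$, one gets $\langle E'(u^n),d^n\rangle = -a_{u^n}(d^n,d^n)\le 0$, with equality precisely when $d^n=0$, i.e.\ when $u^n$ already solves \eqref{groundstate-ev-problem}.

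The first real step is a quantitative descent estimate. Since $E$ is a quartic functional, the Taylor expansion of $E$ along $u^{n+1}-u^n = \tau_n d^n - \tfrac{1}{2}\tau_n^2\|d^n\|^2 u^n + O(\tau_n^3)$ is exact with finitely many terms: the linear term contributes $-\tau_n a_{u^n}(d^n,d^n)$; the renormalization term $-\tfrac{1}{2}\tau_n^2\|d^n\|^2\langle E'(u^n),u^n\rangle = -\tfrac{1}{2}\tau_n^2\|d^n\|^2 a_{u^n}(u^n,u^n)$ is nonpositive and is discarded; and all remaining terms are bounded by $C\tau_n^2\|d^n\|_{H^1(\Omega)}^2$ via the uniform bound $\langle E''(v)w,w\rangle \le C(1+\|v\|_{H^1(\Omega)}^2)\|w\|_{H^1(\Omega)}^2$. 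Here $\|d^n\|_{H^1(\Omega)}$ is controlled because $\|\mathcal{G}_{u^n}u^n\|_{H^1(\Omega)}\le C\|u^n\|$ by the uniform coercivity of the forms $a_v$, and $\gamma(u^n) = (\mathcal{G}_{u^n}u^n,u^n)_{L^2(\Omega)}^{-1}\le a_{u^n}(u^n,u^n)$, as long as $\|u^n\|_{H^1(\Omega)}$ is bounded. This yields $E(u^{n+1})\le E(u^n) - \tau_n(c_0-C\tau_n)\|d^n\|_{H^1(\Omega)}^2$, with $c_0>0$ the (uniform, since $V\ge 0$) coercivity constant. The apparent circularity in the uniform bounds is resolved by an induction on $n$: from $\tfrac{1}{4}\|\nabla v\|^2\le E(v)$ and $E(u^k)\le E(u^0)$ for $k\le n$ one gets a uniform $H^1$-bound on $u^0,\dots,u^n$, so the estimate applies at step $n$; fixing $\tau_{\mbox{\rm\tiny max}}$ below $c_0/(2C)$ (and, for the sign preservation below, at most $1$) then gives $E(u^{n+1})\le E(u^n)$ and $E(u^{n+1})\le E(u^n)-\tfrac{c_0}{2}\tau_{\mbox{\rm\tiny min}}\|d^n\|_{H^1(\Omega)}^2$. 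Summing and using $E\ge 0$ yields $\sum_n\|d^n\|_{H^1(\Omega)}^2<\infty$, so $\|d^n\|_{H^1(\Omega)}\to 0$.

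Next I would extract and identify the limit. The sequence $(u^n)$ is bounded in $H^1_0(\Omega)$, hence along a subsequence $u^{n_k}\rightharpoonup u^{\ast}$ weakly in $H^1_0(\Omega)$, strongly in $L^2(\Omega)$ and $L^4(\Omega)$ and a.e.\ (Rellich, $d\le 3$); thus $\|u^{\ast}\|=1$, and $u^{\ast}\ge 0$ because with $\tau_{\mbox{\rm\tiny max}}\le 1$ the maximum principle for $\mathcal{A}_{u^n}$, whose potential $V+\beta|u^n|^2$ is non-negative, gives $\mathcal{G}_{u^n}u^n\ge 0$, so $u^n+\tau_n d^n = (1-\tau_n)u^n+\tau_n\gamma(u^n)\mathcal{G}_{u^n}u^n\ge 0$ inductively and the limit inherits non-negativity. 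Applying $\mathcal{A}_{u^{n_k}}$ to $d^{n_k}$ gives $\mathcal{A}_{u^{n_k}}u^{n_k}=\gamma(u^{n_k})\,\mathcal{I}u^{n_k}-\mathcal{A}_{u^{n_k}}d^{n_k}$; along a further subsequence on which the bounded scalars $\gamma(u^{n_k})$ converge to some $\gamma^{\ast}>0$, the right-hand side converges strongly in $H^{-1}(\Omega)$ to $\gamma^{\ast}\mathcal{I}u^{\ast}$, whereas the left-hand side converges weakly in $H^{-1}(\Omega)$ to $\mathcal{A}_{u^{\ast}}u^{\ast}$ (the gradient part by weak $H^1$-convergence, the term $(V+\beta|u^{n_k}|^2)u^{n_k}$ by strong $L^4$-convergence). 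Hence $a_{u^{\ast}}(u^{\ast},v)=\gamma^{\ast}(u^{\ast},v)_{L^2(\Omega)}$ for all $v\in H^1_0(\Omega)$, and a standard weak-to-strong argument (testing this identity against $u^{n_k}-u^{\ast}$) upgrades the convergence to strong in $H^1_0(\Omega)$. So $u^{\ast}$ is a non-negative, $L^2$-normalized solution of \eqref{groundstate-ev-problem}; by the strong maximum principle it is strictly positive in $\Omega$, and by uniqueness of the positive ground state (Proposition \ref{proposition-ev-problem}, Proposition \ref{proposition-minimization-problem} and \cite[Appendix]{CCM10}) we conclude $u^{\ast}=u$ and $\gamma^{\ast}=\lambda$. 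Since the same argument applies to every subsequence of $(u^n)$ and always produces $u$, every subsequence has a further subsequence converging to $u$ in $H^1_0(\Omega)$, which forces $\|u^n-u\|_{H^1(\Omega)}\to 0$.

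The step I expect to be the main obstacle is the identification of the limit — more precisely, ruling out that $u^{\ast}$ is an \emph{excited} critical point. The decay $\|d^n\|_{H^1(\Omega)}\to 0$ by itself only produces \emph{some} solution of \eqref{groundstate-ev-problem}; it is the preservation of non-negativity along the iteration, which is one of the constraints that pins down the admissible size of $\tau_{\mbox{\rm\tiny max}}$, together with uniqueness of the \emph{positive} ground state, that turns ``a critical point'' into ``the ground state''. Making the passage to the limit in the nonlinear term precise and bootstrapping weak to strong $H^1$-convergence are the remaining technical points, but they are routine given the compact Sobolev embeddings in $d\le 3$.
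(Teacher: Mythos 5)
The paper does not prove this proposition itself but defers to \cite[Theorem 5.1]{HeP20}, and your argument reproduces essentially the same strategy used there: a quantitative energy-descent estimate for $\tau_{\max}$ small enough, yielding $\sum_n \|d^n\|_{H^1(\Omega)}^2<\infty$, preservation of non-negativity of the iterates via the maximum principle for $\mathcal{A}_{u^n}$ (which is where the restriction on $\tau_{\max}$ enters), compactness, and identification of the limit through uniqueness of the positive normalized eigenfunction. Your key identities $(u^n,d^n)_{L^2(\Omega)}=0$ and $\langle E'(u^n),d^n\rangle=-a_{u^n}(d^n,d^n)$ check out, and the induction resolving the apparent circularity in the uniform $H^1$-bounds is the standard fix, so the proposal is sound.
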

As global convergence is guaranteed for all parameters $\tau \le \tau_{\mbox{\rm\tiny max}}$, it is interesting to ask what is the asymptotic convergence rate in a neighborhood of the ground state and how does it depend on $\tau$. A linear convergence rate in a neighborhood of $u$ was first established by Zhang \cite{Zha21}. However, the rate obtained in \cite{Zha21} is not explicit and only proved for sufficiently small values of $\tau$. In the following theorem we specify the asymptotic convergence rate by showing that it can be bounded by $|1-\tau| + \tau \tfrac{\lambda_1}{\lambda_2}$ and that it in fact holds for all $\tau \in ( 0 , 2 \hspace{1pt}(1+ \tfrac{\lambda_1}{\lambda_2} )^{-1} )$. In particular, linear convergence is established for every $0<\tau \le 1$ (and even slightly larger values of $\tau$).
\begin{theorem}[Asymptotic convergence rate of the inverse iterations with damping]
\label{thm_conv_inverse_iteration_damping}
Assume \ref{A1}-\ref{A4}, let $u \in H^1_0(\Omega)$ denote the ground state to the GPE \eqref{groundstate-ev-problem} and let $u^n\in H^1_0(\Omega)$ denote the inverse iterations with damping given by \eqref{damped-inverse-iterations} with a fixed damping parameter $\tau_n=\tau$, which fulfills
$$
0<\tau<  \tau_{\mbox{\tiny crit}} := 2 \, (1+ \tfrac{\lambda_1}{\lambda_2} )^{-1}, \qquad \mbox{where we note that }  1< \tau_{\mbox{\tiny crit}} < 2.
$$
Recall here that $\lambda_1$ and $\lambda_2$ are the first and the second eigenvalue of problem \eqref{linearized-ev-problem}. In this setting, there is a environment $S \subset H^1_0(\Omega)$ of $u$ and a constant $C>0$ such that
$$
\|  u^n - u \|_{H^1(\Omega)} \le C \hspace{2pt} \left( |1-\tau| + \tau \tfrac{\lambda_1}{\lambda_2} \right)^n \hspace{2pt} \| u^0 - u \|_{H^1(\Omega)}
$$
for all starting values 
$u^0 \in S$
and $n\ge  1$.
\end{theorem}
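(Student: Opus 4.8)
The plan is to follow the blueprint already established in the proofs of Theorem \ref{thm_conv_basic_inverse_iteration} and Theorem \ref{thm_conv_DNGF_iteration}, namely: write the damped iteration as a fixed point iteration $u^{n+1}=\phi(u^n)$, compute $\phi^{\prime}(u)$, characterize its spectrum via a weighted linear eigenvalue problem on $V_u^{\perp}$, and then bound the spectral radius using Lemma \ref{lemma-Linfty-theta-u} together with the Courant--Fischer min-max principle, before invoking the Ostrowski theorem (Proposition \ref{prop-ostrowski}). Concretely, I set $\phi(v) = \frac{(1-\tau)v + \tau\gamma(v)\mathcal{G}_v v}{\|(1-\tau)v + \tau\gamma(v)\mathcal{G}_v v\|}$. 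Since $\mathcal{G}_u u = \lambda^{-1}u$ and $\gamma(u) = (\mathcal{G}_u u, u)^{-1} = \lambda$, the unnormalized numerator evaluated at $u$ is $(1-\tau)u + \tau u = u$, so $\phi(u)=u$ and the normalization is harmless at the fixed point. The first step is therefore to compute the Fr\'echet derivative of $v\mapsto (1-\tau)v + \tau\gamma(v)\mathcal{G}_v v$ at $u$, using Lemma \ref{lemma-1} for the derivative of $\psi(v) = \mathcal{G}_v v$ and the product rule for the scalar factor $\gamma(v) = (\psi(v),v)_{L^2(\Omega)}^{-1}$; this requires computing $\gamma^{\prime}(u)h$, which involves $(\psi^{\prime}(u)h,u)_{L^2(\Omega)} + (\psi(u),h)_{L^2(\Omega)}$.

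Carrying this out, I expect the derivative of the numerator at $u$ to take the form $h \mapsto (1-\tau)h + \tau\lambda\,\psi^{\prime}(u)h + (\text{terms proportional to }u)$, where $\psi^{\prime}(u)h = \mathcal{G}_u((1-2\tfrac{\beta}{\lambda}|u|^2)h)$ by Conclusion \ref{frechet-derivatives-psi-phi-in-u}. After composing with the derivative of the normalization map $w\mapsto w/\|w\|$ at $u$ (which is the $L^2$-orthogonal projection $P^{\perp}$ onto $V_u^{\perp}$, since $\|u\|=1$), all the $u$-proportional correction terms get annihilated, exactly as in the earlier proofs. This should yield, for $h\in V_u^{\perp}$, $\phi^{\prime}(u)h = (1-\tau)h + \tau\lambda\, P^{\perp}\mathcal{G}_u((1-2\tfrac{\beta}{\lambda}|u|^2)h)$, and I expect to recover the case $\tau=1$ reducing to $\phi^{\prime}(u)$ from Conclusion \ref{frechet-derivatives-psi-phi-in-u}. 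The key structural point is that the eigenvalues $\mu_i$ of $\phi^{\prime}(u)$ restricted to $V_u^{\perp}$ satisfy $\mu_i = 1-\tau + \tau\nu_i$, where $\nu_i$ are the eigenvalues of the operator $h\mapsto \lambda\,\mathcal{G}_u((1-2\tfrac{\beta}{\lambda}|u|^2)h)$ on $V_u^{\perp}$ — precisely the operator analyzed in the proof of Theorem \ref{thm_conv_basic_inverse_iteration}, whose spectral radius is bounded by $\tfrac{\lambda_1}{\lambda_2}$ (indeed by $|\mu_1|$ in the notation of Remark \ref{remark-sharper-rates}). The $\mu_i=0$ direction $\theta_u^{-1}u$ and the direction $u$ itself are again irrelevant to the spectral radius.

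Given the real eigenvalues $\nu_i$ with $|\nu_i|\le \tfrac{\lambda_1}{\lambda_2}<1$, the spectral radius of $\phi^{\prime}(u)$ is $\rho = \sup_i |1-\tau + \tau\nu_i|$. Since $\nu_i\in[-\tfrac{\lambda_1}{\lambda_2},\tfrac{\lambda_1}{\lambda_2}]$ and the map $\nu\mapsto 1-\tau+\tau\nu$ is affine, $\rho\le \max\{|1-\tau-\tau\tfrac{\lambda_1}{\lambda_2}|,\ |1-\tau+\tau\tfrac{\lambda_1}{\lambda_2}|\}$; for $0<\tau<1$ the maximum is attained at the right endpoint and equals $1-\tau+\tau\tfrac{\lambda_1}{\lambda_2} = 1-\tau(1-\tfrac{\lambda_1}{\lambda_2}) < 1$, while for $1\le\tau<\tau_{\text{crit}}$ one checks that $|1-\tau-\tau\tfrac{\lambda_1}{\lambda_2}| = \tau(1+\tfrac{\lambda_1}{\lambda_2})-1 < 1$ is equivalent to $\tau < 2(1+\tfrac{\lambda_1}{\lambda_2})^{-1} = \tau_{\text{crit}}$, and it dominates $|1-\tau+\tau\tfrac{\lambda_1}{\lambda_2}|$; in all cases $\rho \le |1-\tau| + \tau\tfrac{\lambda_1}{\lambda_2}$, so the stated bound follows and $\rho<1$ on the whole range $(0,\tau_{\text{crit}})$. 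One must also note $1 < \tau_{\text{crit}} < 2$ since $0 < \tfrac{\lambda_1}{\lambda_2} < 1$. Applying Proposition \ref{prop-ostrowski} with $\eps$ chosen small enough that $\rho+\eps \le |1-\tau|+\tau\tfrac{\lambda_1}{\lambda_2}$ then gives the result; as in the earlier proofs, the spectral gap between $\rho$ and $|1-\tau|+\tau\tfrac{\lambda_1}{\lambda_2}$ coming from the strict inequality $|\nu_i| < \tfrac{\lambda_1}{\lambda_2}$ (via the $(1-\delta)$ factor of Lemma \ref{lemma-Linfty-theta-u} and assumption \ref{A4}) absorbs the $\eps$-loss, removing the $\eps$-dependence in the final statement.

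The main obstacle I anticipate is purely computational bookkeeping: correctly differentiating the scalar factor $\gamma(v) = (\mathcal{G}_v v, v)_{L^2(\Omega)}^{-1}$ and verifying that, after composition with $P^{\perp}$, every term involving $\gamma^{\prime}(u)$, $(\psi^{\prime}(u)h,u)_{L^2(\Omega)}$, and the $(v,\cdot)$-coupling collapses so that $\phi^{\prime}(u)$ on $V_u^{\perp}$ is genuinely the affine combination $(1-\tau)\,\mathrm{id} + \tau\,(\text{the }\beta=1\text{ operator})$. This is the step where a sign error or a missed term would break the clean relation $\mu_i = 1-\tau+\tau\nu_i$. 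Once that identity is in hand, the spectral-radius estimate is an elementary case distinction on $\tau$, and the rest is a verbatim application of the machinery already developed in Section \ref{subsection-proof-main-1}.
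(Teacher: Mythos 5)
Your proposal is correct and follows essentially the same route as the paper: the paper likewise computes $\gamma^{\prime}(u)$ and $\psi_{\tau}^{\prime}(u)$, uses $\psi_{\tau}(u)=u$ to reduce $\phi_{\tau}^{\prime}(u)$ to $\psi_{\tau}^{\prime}(u)-(\psi_{\tau}^{\prime}(u)\cdot,u)_{L^2(\Omega)}\,u$, restricts to the $L^2$-orthogonal complement of $u$ where the operator becomes exactly the affine combination $(1-\tau)\,\id+\tau\lambda\,\mathcal{G}_u((1-2\tfrac{\beta}{\lambda}|u|^2)\,\cdot\,)$, and then bounds the shifted spectrum by $|1-\tau|+\tau\tfrac{\lambda_1}{\lambda_2}$ before invoking Ostrowski, with the strict gap from Lemma \ref{lemma-Linfty-theta-u} and \ref{A4} absorbing the $\eps$-loss. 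Your explicit case distinction on $\tau$ at the endpoints $\pm\tfrac{\lambda_1}{\lambda_2}$ is a slightly more detailed rendering of what the paper compresses into the single estimate $|\mu_i|\le|1-\tau|+\tau|\tfrac{\lambda}{\lambda_2}|-\eps$, but it is not a different argument.
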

The proof is postponed to Section \ref{subsection-proof-convergence-damping}.

The rate in Theorem \ref{thm_conv_inverse_iteration_damping} becomes best for $\tau=1$, i.e., when the method coincides with the basic inverse iteration. This suggests that the iterations \eqref{damped-inverse-iterations} can be split into two phases: A first phase, where $\tau_n\in (0,2)$ is computed adaptively to ensure global convergence; and a second phase, where the damping can be potentially switched off (i.e. $\tau_n=1$) as soon as the iterates are close to the ground state. In this second phase, the convergence rate approaches at least the contraction factor $\tfrac{\lambda_1}{\lambda_2}$, known from the basic inverse iteration. However, a sharper bound for the convergence rate in the asymptotic phase (for fixed $\tau$) is given by $|1-\tau + \tau \, \mu_j|$, where $\mu_j$ is the eigenvalue of the $\tau$-independent eigenvalue problem \eqref{weighted-evp} (previously presented in Remark \ref{remark-sharper-rates}) such that the expression becomes maximal. From this more accurate formula we see that there might be space for improvements and the best {\it asymptotic rate} might now always be attained for  $\tau=1$.

\subsection{Proof of Theorem \ref{thm_conv_inverse_iteration_damping}}
\label{subsection-proof-convergence-damping}

To keep the presentation short, we define 
\begin{align*}
\phi_{\tau} ( v ):= \frac{\psi_{\tau}(v)}{\| \psi_{\tau}(v) \|}, \qquad \mbox{where } \qquad
\psi_{\tau}(v):= (1-\tau) v + \tau \gamma(v) \mathcal{G}_v v.
\end{align*}
With this, the iteration \eqref{damped-inverse-iterations} is compactly written as $u^{n+1}=\phi_{\tau}(u^n)$.
Note that for $\tau=1$, we obtain $\phi_1(v)=\phi(v)$ and $\psi_1(v) = \gamma(v) \psi(v)$, where $\phi$ and $\psi$ are defined as in Lemma \ref{lemma-1}.

To compute the Fr\'echet derivative of $\phi_{\tau}$ in $u$, we start with $\gamma$ and $\psi_{\tau}$ in the following lemma.

\begin{lemma}
The mapping $\gamma : H^1_0(\Omega) \rightarrow \mathbb{R}$ is Fr\'echet-differentiable for all $v\in H^1_0(\Omega)$. For the ground state $u\in H^1_0(\Omega)$ and a direction $h \in H^1_0(\Omega)$ we have
\begin{align*}
 \gamma^{\prime}(u) h   =   - 2 \, \lambda \, ( u - \tfrac{\beta}{\lambda} \, |u|^2 u \hspace{1pt} , h \hspace{1pt})_{L^2(\Omega)}
\end{align*}
and
\begin{align*}
 \psi_{\tau}^{\prime}(u) h  &= (1-\tau)h - 2\, \tau \, ( \hspace{1pt} (1 - \tfrac{\beta}{\lambda} \, |u|^2) u \hspace{1pt} , h )_{L^2(\Omega)} \, u +  \tau \, \lambda \,  \mathcal{G}_{u} ( \hspace{1pt} (1 - 2 \tfrac{\beta}{\lambda} \, |u|^2) h \hspace{1pt} ).
\end{align*} 
\end{lemma}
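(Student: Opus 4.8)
The plan is to compute the two Fr\'echet derivatives directly from the definitions, exploiting the simplifications that occur at the ground state $u$, namely $\mathcal{G}_u u = \lambda^{-1} u$, $\|u\|=1$, and $\gamma(u) = (\mathcal{G}_u u, u)_{L^2(\Omega)}^{-1} = \lambda$. First I would differentiate $\gamma(v) = (\mathcal{G}_v v, v)_{L^2(\Omega)}^{-1}$. Writing $\gamma(v) = g(v)^{-1}$ with $g(v) = (\psi(v), v)_{L^2(\Omega)}$ where $\psi(v) = \mathcal{G}_v v$ as in Lemma~\ref{lemma-1}, the chain rule gives $\gamma^{\prime}(v)h = -g(v)^{-2}\,g^{\prime}(v)h$, and $g^{\prime}(v)h = (\psi^{\prime}(v)h, v)_{L^2(\Omega)} + (\psi(v), h)_{L^2(\Omega)}$. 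Evaluating at $u$ and using Conclusion~\ref{frechet-derivatives-psi-phi-in-u} for $\psi^{\prime}(u)h = \mathcal{G}_u((1 - 2\tfrac{\beta}{\lambda}|u|^2)h)$, together with the self-adjointness identity $(\mathcal{G}_u w, u)_{L^2(\Omega)} = a_u(\mathcal{G}_u w, \mathcal{G}_u u) = \lambda^{-1}(w,u)_{L^2(\Omega)}$, one finds $(\psi^{\prime}(u)h, u)_{L^2(\Omega)} = \lambda^{-1}((1 - 2\tfrac{\beta}{\lambda}|u|^2)u, h)_{L^2(\Omega)}$ and $(\psi(u), h)_{L^2(\Omega)} = \lambda^{-1}(u,h)_{L^2(\Omega)}$. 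Summing, $g^{\prime}(u)h = \lambda^{-1}\big((2u - 2\tfrac{\beta}{\lambda}|u|^2 u), h\big)_{L^2(\Omega)}$, and since $g(u)^{-2} = \gamma(u)^2 = \lambda^2$, this yields exactly $\gamma^{\prime}(u)h = -2\lambda\,(u - \tfrac{\beta}{\lambda}|u|^2 u, h)_{L^2(\Omega)}$.

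Next I would differentiate $\psi_{\tau}(v) = (1-\tau)v + \tau\gamma(v)\mathcal{G}_v v = (1-\tau)v + \tau\gamma(v)\psi(v)$ by the product rule: $\psi_{\tau}^{\prime}(v)h = (1-\tau)h + \tau\big(\gamma^{\prime}(v)h\big)\psi(v) + \tau\gamma(v)\psi^{\prime}(v)h$. Evaluating at $u$: the term $(\gamma^{\prime}(u)h)\psi(u)$ becomes $\big(-2\lambda(u - \tfrac{\beta}{\lambda}|u|^2 u, h)_{L^2(\Omega)}\big)\cdot \lambda^{-1}u = -2((1 - \tfrac{\beta}{\lambda}|u|^2)u, h)_{L^2(\Omega)}\,u$; the term $\gamma(u)\psi^{\prime}(u)h$ becomes $\lambda\,\mathcal{G}_u((1 - 2\tfrac{\beta}{\lambda}|u|^2)h)$. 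Adding these with the $(1-\tau)h$ term reproduces the claimed formula for $\psi_{\tau}^{\prime}(u)h$ verbatim.

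As for the justification that these derivatives exist: $\gamma$ is a composition of the (already established) Fr\'echet-differentiable map $\psi$, the bounded bilinear pairing $(\cdot,\cdot)_{L^2(\Omega)}$, and the smooth inversion $t \mapsto t^{-1}$ — the latter is valid precisely because $\gamma(v) = (\mathcal{G}_v v, v)_{L^2(\Omega)}^{-1} > 0$ stays away from zero in a neighborhood of $u$ by continuity, so $\gamma$ is Fr\'echet-differentiable there; and $\psi_{\tau}$, being a linear combination of $v$, $\gamma(v)$ and $\psi(v)$ with the scalar multiplication $\mathbb{R} \times H^1_0(\Omega) \to H^1_0(\Omega)$ bounded bilinear, inherits Fr\'echet-differentiability. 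I expect the computation itself to be entirely routine; the only mild subtlety — and the step I would be most careful about — is the repeated use of the self-adjointness/symmetry identity $(\mathcal{G}_u w, u)_{L^2(\Omega)} = \lambda^{-1}(w,u)_{L^2(\Omega)}$, since this is exactly what converts the awkward $\mathcal{G}_u$-dependent inner products arising from $\gamma^{\prime}$ into the clean pointwise expression $(u - \tfrac{\beta}{\lambda}|u|^2 u, h)_{L^2(\Omega)}$, and it is easy to drop a factor of $\lambda$ along the way.
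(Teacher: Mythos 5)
Your proposal is correct and follows essentially the same route as the paper: the quotient/chain rule applied to $\gamma(v)=(\mathcal{G}_v v,v)_{L^2(\Omega)}^{-1}$, the identity $(\mathcal{G}_u w,u)_{L^2(\Omega)}=\lambda^{-1}(w,u)_{L^2(\Omega)}$ to simplify the term involving $\psi^{\prime}(u)h$, and the product rule for $\psi_{\tau}$ together with $\psi(u)=\lambda^{-1}u$ and $\gamma(u)=\lambda$. Your added remarks on why the composition is Fr\'echet-differentiable (the denominator staying away from zero near $u$) are a small but welcome refinement over the paper, which takes this for granted.
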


\begin{proof}
For $v\in H^1_0(\Omega)$, we compute the Fr\'echet derivative of $\gamma(v)$ in direction $h\in H^1_0(\Omega)$ as
$$
 \gamma^{\prime}(v) h  =  - \gamma(v)^2 \left( ( \, \psi'(v) h \, , v )_{L^2(\Omega)} + ( \psi(v), h )_{L^2(\Omega)} \right), 
$$
where $\psi(v)= \mathcal{G}_v v$. Evaluating this expression for the ground state $u$, we have $\psi(u) = \lambda^{-1} u$ and $\gamma(u) = \lambda$. Together with the formula $\psi^{\prime}(u) h = \mathcal{G}_{u} ( \hspace{1pt} (1 - 2 \tfrac{\beta}{\lambda} \, |u|^2) h \hspace{1pt} )$ 
from Conclusion \ref{frechet-derivatives-psi-phi-in-u}, we obtain
\begin{align*}
 \gamma^{\prime}(u) h  &=  - \lambda^2 \left( (\mathcal{G}_{u} ( \hspace{1pt} (1 - 2 \tfrac{\beta}{\lambda} \, |u|^2) h \hspace{1pt} ) , u )_{L^2(\Omega)} +  \lambda^{-1}  ( u, h )_{L^2(\Omega)} \right) \\
 &=  - \lambda \left( ( \hspace{1pt} (1 - 2 \tfrac{\beta}{\lambda} \, |u|^2) h \hspace{1pt} , u )_{L^2(\Omega)} + ( u, h )_{L^2(\Omega)} \right) \\
 &=   - 2 \, \lambda \, ( \hspace{1pt} (1 - \tfrac{\beta}{\lambda} \, |u|^2) u \hspace{1pt} , h )_{L^2(\Omega)}.
\end{align*}
Recalling $\psi_{\tau}(v)= (1-\tau) v + \tau \gamma(v) \psi(v)$ and $\psi(v)=\mathcal{G}_v v$, we further obtain
\begin{align*}
 \psi_{\tau}^{\prime}(u) h &= (1-\tau)h + \tau \, (\gamma^{\prime}(u) h) \, \mathcal{G}_{u} u +  \tau \, \gamma(u) \,  \psi^{\prime}(u) h  \\
&= (1-\tau)h - 2\, \tau \, ( \hspace{1pt} (1 - \tfrac{\beta}{\lambda} \, |u|^2) u \hspace{1pt} , h )_{L^2(\Omega)} \, u +  \tau \, \lambda \, \psi^{\prime}(u) h \\
&= (1-\tau)h - 2\, \tau \, ( \hspace{1pt} (1 - \tfrac{\beta}{\lambda} \, |u|^2) u \hspace{1pt} , h )_{L^2(\Omega)} \, u +  \tau \, \lambda \,  \mathcal{G}_{u} ( \hspace{1pt} (1 - 2 \tfrac{\beta}{\lambda} \, |u|^2) h \hspace{1pt} ).
\end{align*} 
\end{proof}

With this lemma and our previous results from Section \ref{section-basic-inverse-iteration} we are now ready to prove the final main result.

\begin{proof}[Proof of Theorem \ref{thm_conv_inverse_iteration_damping}]
As before, we need to estimate the largest eigenvalue in magnitude of $\psi_{\tau}^{\prime}(u)$ to apply the Ostrowski Theorem. Since $\psi_{\tau}(u)=(1-\tau) u + \tau \lambda \, \mathcal{G}_uu = u$, we have
\begin{align*}
\phi^{\prime}_{\tau}(u) 
&= \frac{\psi^{\prime}_{\tau}(u)}{ \| \psi_{\tau}(u) \| } - \frac{1}{ \| \psi_{\tau}(u) \|^3 }
(\psi^{\prime}_{\tau}(u) , \psi_{\tau}(u))_{L^2(\Omega)}\, \psi_{\tau}(u) \\
&= \psi^{\prime}_{\tau}(u)   -  (\psi^{\prime}_{\tau}(u) , u )_{L^2(\Omega)}\, u.
\end{align*}
This implies for any $v \in H^1_0(\Omega)$ that
\begin{align*}
( \phi^{\prime}_{\tau}(u)v , u )_{L^2(\Omega)} = ( \psi^{\prime}_{\tau}(u) v, u  )_{L^2(\Omega)}  -  (\psi^{\prime}_{\tau}(u) v , u )_{L^2(\Omega)}\, \| u \|^2 = 0.
\end{align*}
Consequently, if $v_i$ is an eigenfunction with eigenvalue $\mu_i$ we have
\begin{align*}
0 = ( \phi^{\prime}_{\tau}(u)v_i , u )_{L^2(\Omega)} = \mu_i ( v_i , u )_{L^2(\Omega)}.
\end{align*} 
As before, we conclude that $v_i$ is $L^2$-orthogonal to $u$ for all eigenvalues $\mu_i\not=0$. We can therefore restrict ourselves to seeking $\mu_i \in \mathbb{R} \setminus \{0\}$ and $v_i \in H^1_0(\Omega)$ with $(v_i,u)_{L^2(\Omega)}=0$ such that
\begin{align}
\label{ev-problem-mu-tau}
 ( \phi^{\prime}_{\tau}(u)v_i , w )_{L^2(\Omega)} =  \mu_i ( v_i , w )_{L^2(\Omega)} \qquad \mbox{for all } w\in H^1_0(\Omega) \mbox{ with } (w,u)_{L^2(\Omega)}=0.
\end{align}
On the orthogonal complement of $u$, $( \phi^{\prime}_{\tau}(u)v_i , w )_{L^2(\Omega)}$ simplifies to
\begin{align}
\label{equation-phi-prime-tau}
 ( \phi^{\prime}_{\tau}(u)v_i , w )_{L^2(\Omega)} = (1-\tau) ( v_i , w )_{L^2(\Omega)} + \tau \lambda ( \mathcal{G}_{u} ( \hspace{1pt} (1 - 2 \tfrac{\beta}{\lambda} \, |u|^2) v_i \hspace{1pt} )  , w )_{L^2(\Omega)}.
\end{align}
We already know from the proof of Theorem \ref{thm_conv_basic_inverse_iteration}, that the largest eigenvalue in magnitude of $\mathcal{G}_{u} ( \hspace{1pt} (1 - 2 \tfrac{\beta}{\lambda} \, |u|^2) (\hspace{1pt}\cdot\hspace{1pt}) \hspace{1pt} )$ on the $L^2$-orthogonal complement of $u$ can be bounded by $\lambda_2^{-1}$. The first part in \eqref{equation-phi-prime-tau} is just a spectral shift. For all $\mu_i$ we obtain therefore
\begin{align*}
|\mu_i| \le |1-\tau| + \tau |\tfrac{\lambda}{\lambda_2}| - \eps.
\end{align*}
for some $\eps>0$ if $\beta>0$. As $\lambda=\lambda_1$, Proposition \ref{prop-ostrowski} finishes the proof.
\end{proof}
In the last step, we crudely estimated $|\mu_i|$ by the sum of the absolute values of the spectral shift $1-\tau$ and the maximum eigenvalue $ \tau \tfrac{\lambda}{\lambda_2}$. With this, one might wonder if it is possible to remove the absolute values with a more careful argument to get the improved rate $|1+ \tau (\tfrac{\lambda_1}{\lambda_2}-1)|$ for $\tau\ge 1$. However, this particular rate is unfortunately impossible as seen by contradiction. If that rate would be achievable, then the choice $\tau = \frac{\lambda_2}{\lambda_2 - \lambda_1} > 1$ would allow for arbitrary fast convergence. However, numerical line search experiments for finding optimal values for $\tau$ cannot confirm this (cf. \cite{HeP20}). Furthermore, it was also proved in \cite{HeP20} that if $\tau \ge 2$, then the damped inverse iterations \eqref{damped-inverse-iterations} must necessarily diverge. Since $\frac{\lambda_2}{\lambda_2 - \lambda_1}$ can easily become larger than $2$ for problems with suitable spectral gaps, we would have another contradiction. This shows that, in general, the rate $|1+ \tau (\tfrac{\lambda_1}{\lambda_2}-1)|$ is not possible for $\tau > 1$. However, note that this does not necessarily mean that $|\mu_i|$ becomes optimal for $\tau=1$ and improvements are potentially possible.

\section{Numerical experiments}
\label{section-numerical-experiments}

In the following numerical experiments, we consider the Gross--Pitaevskii eigenvector problem in $1d$ as this simplifies the numerical study of convergence rates and since our results do not depend on the space dimension. We also restrict the numerical investigations to the basic inverse iterations as formulated in Definition \ref{definition-basic-inverse-iteration} in order to focus on the influence of spectral gaps on the asymptotic rates.

In our experiments the space discretization is based on $P1$ finite elements with $10^3$ degrees of freedom. The ground state is always computed with an accurate reference computation (in the same finite element space) using the stopping criterion that two successive iterations need to produce approximations of the eigenvalue $\lambda^{(n)}:= a_{u^{n}}(u^n,u^n)$ with $|\lambda^{(n+1)} - \lambda^{(n)}| \le 10^{-13}$. For an iteration $n$, we define the numerical contraction rate for the $H^1$-error by
\begin{align}
\label{contraction-rate-H1}
r(n) := \frac{\| u - u^{n+1}\|_{H^1(\Omega)} }{ \| u - u^n \|_{H^1(\Omega)} },
\end{align}
where $u$ is the unique positive ground state and $u^n$ is the result from the $n$'th iteration of the basic inverse iteration \eqref{basic-inverse-iterations}. All iterations are initialized with a random initial value that is normalized in $L^2(\Omega)$. The random initial value is selected to increase the required number of iterations and to get a better picture on the contraction rates. A more reasonable starting value from a practical perspective would be the ground state of the linear equation with $\beta=0$ or a Thomas-Fermi approximation.

In the numerical experiments we compare the observed contraction rate $r(n)$ with the predicted upper bound $\tfrac{\lambda}{\lambda_2}$ according to Theorem \ref{thm_conv_basic_inverse_iteration} and with the improved upper bound $|\mu_1|$ from Remark \ref{remark-sharper-rates}.

\subsection{Inverse iteration applied to GPE with moderate spectral gap}

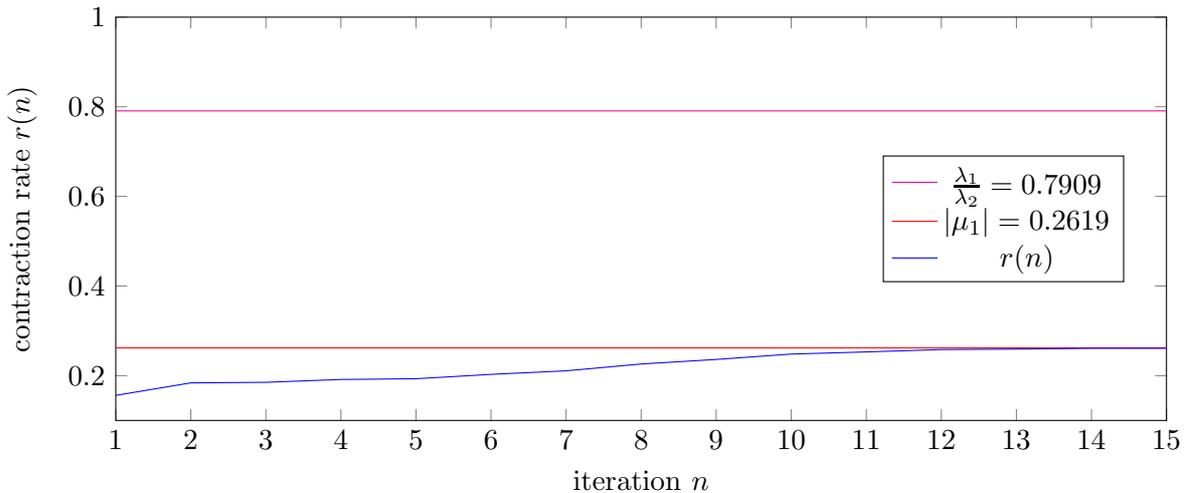
\begin{figure}
\begin{tikzpicture}
    \begin{axis}[
       legend style={at={(0.73,0.5)},anchor=west},
        height=0.45\textwidth,
        width=1.00\textwidth,
        xmax   = 15,  
        xmin   = 1, %
        ymax = 1.0, 
        ymin = 0.1,
        xlabel=iteration $n$,
        ylabel=contraction rate $r(n)$]
          \addplot[magenta] coordinates {
        ( 1 , 0.7909 ) 
        ( 2 , 0.7909 ) 
        ( 3 , 0.7909 ) 
        ( 4 , 0.7909 ) 
        ( 5 , 0.7909 ) 
        ( 6 , 0.7909 ) 
        ( 7 , 0.7909 ) 
        ( 8 , 0.7909 ) 
        ( 9 , 0.7909 ) 
      ( 10 , 0.7909 ) 
      ( 11 , 0.7909 ) 
      ( 12 , 0.7909 ) 
      ( 13 , 0.7909 ) 
      ( 14 , 0.7909 ) 
      ( 15 , 0.7909 ) };
          \addplot[red] coordinates {
        ( 1 , 0.2619 ) 
        ( 2 , 0.2619 ) 
        ( 3 , 0.2619 ) 
        ( 4 , 0.2619 ) 
        ( 5 , 0.2619 ) 
        ( 6 , 0.2619 ) 
        ( 7 , 0.2619 ) 
        ( 8 , 0.2619 ) 
        ( 9 , 0.2619 ) 
      ( 10 , 0.2619 ) 
      ( 11 , 0.2619 ) 
      ( 12 , 0.2619 ) 
      ( 13 , 0.2619 ) 
      ( 14 , 0.2619 ) 
      ( 15 , 0.2619 ) };
         \addplot[blue] coordinates {
( 1 , 0.15594 ) 
( 2 , 0.18386 ) 
( 3 , 0.18514 ) 
( 4 , 0.19146 ) 
( 5 , 0.19311 ) 
( 6 , 0.20284 ) 
( 7 , 0.21076 ) 
( 8 , 0.22600 ) 
( 9 , 0.23614 ) 
( 10 , 0.24815 ) 
( 11 , 0.25294 ) 
( 12 , 0.25820 ) 
( 13 , 0.25924 ) 
( 14 , 0.26107 ) 
( 15 , 0.26109 )};
        \legend{%
$\tfrac{\lambda_1}{\lambda_2} = 0.7909$,
$|\mu_1| = 0.2619$,
$r(n)$,
}
    \end{axis}
\end{tikzpicture}
\caption{{\it Contraction rates for model problem 1. We compare the actual contraction rate $r(n)$ at iteration $n$ with the upper bound $\tfrac{\lambda_1}{\lambda_2}$ that depends on the first spectral gap and the sharper bound $|\mu_1|$ that is obtained through the weighted eigenvalue problem  \eqref{weighted-evp}.}}
\label{rates_modelproblem1}
\end{figure}

In the first example, we consider the Gross--Pitaevskii equation on the interval $\Omega=(-2,2)$ with a potential of the form 
$$
V(x) = \tfrac{1}{4}x^2 + \sin(2 \pi x)^2 
$$
and the constant $\beta=5$ for the nonlinear term. We seek the corresponding (positive) ground state $u\in H^1_0(\Omega)$ with $\| u\|=1$ as given by \eqref{def-ground-state} and we seek the corresponding ground state eigenvalue $\lambda$. Solving for $\lambda$ with the reference computation sketched at the beginning of this section, we obtain $\lambda=\lambda_1=2.65187$. Computing the second eigenvalue of the linearized problem  \eqref{linearized-ev-problem} around the reference ground state $u$, we obtain $\lambda_2= 3.35315$. 
This leads us to the upper bound for the asymptotic contraction rate with
$$
\frac{\lambda_1}{\lambda_2} = 0.79086. 
$$
After that, we solved the weighted eigenvalue problem \eqref{weighted-evp} and obtained the largest eigenvalue in magnitude with
$$
\mu_1 = 0.26197. 
$$
We compare these rates with the results from an inverse iteration \eqref{basic-inverse-iterations} that was initialized with a random starting value. The stopping criterion was set to $|\lambda^{(n+1)} - \lambda^{(n)}| \le 10^{-11}$ to not come too close to the reference solution which would pollute the numerical contraction rates. With this stopping criterion, $n=16$ iterations were made.
The results are depicted in Figure \ref{rates_modelproblem1}. We observe that the rate $\tfrac{\lambda_1}{\lambda_2}=0.7909$ yields quite an overshoot for the observed contraction $r(n)$, which we computed according to \eqref{contraction-rate-H1}. In the final iteration, $r(n)$ approached the value $0.26109$, which is remarkably close to our guaranteed upper bound given by $|\mu_1| = 0.26197$. The considerable difference between  $\tfrac{\lambda_1}{\lambda_2}$ and  $|\mu_1|$ is also quite surprising when recalling that $\tfrac{\lambda_1}{\lambda_2}$ is the sharp rate that we would obtain when solving the linearized Gross--Pitaevskii equation (i.e. linearized around the exact ground state $u$) with a conventional (linear) inverse iteration, whereas the much faster rate $|\mu_1|$ can only be obtained from the generalized inverse iteration. In other words, a linear solver for a linear problem performs worse than a linearized solver for a nonlinear problem, both approximating the same ground state $u$.

\subsection{Inverse iteration applied to GPE with small spectral gap}

In the second test case, we regard a more tough setup with a small spectral gap after the first eigenvalue. The following problem is taken from \cite{BaC13b}.

For $\Omega=(-16,16)$, we consider the Gross--Pitaevskii equation with the harmonic oscillator potential $V(x)=\tfrac{1}{2}x^2$ and the interaction constant $\beta=400$, i.e., we seek the ground state $u\in H^1_0(\Omega)$ with $\| u\|=1$ and the smallest eigenvalue $\lambda>0$ such that
\begin{align}
\label{numexample2}
-\tfrac{1}{2} u^{\prime\prime}(x) + \tfrac{1}{2}x^2 \, u(x) + 400 \, |u(x)|^2 u(x) = \lambda\, u(x).
\end{align}
We obtain the ground state eigenvalue with $\lambda= 35.57746$, which is consistent with the findings of \cite{BaC13b}. For the second eigenvalue of the linearized problem \eqref{linearized-ev-problem} we calculated $\lambda_2=35.60994$. This yields the very poor rate
$$
\frac{\lambda_1}{\lambda_2} = 0.99909 
$$
as an upper bound for the asymptotic contraction factor according to Theorem \ref{thm_conv_basic_inverse_iteration}. To get an improved prediction for the rate in \eqref{improved-rate}, we also solve the weighted eigenvalue problem \eqref{weighted-evp} in the $L^2$-orthogonal complement of $u$. The largest eigenvalue in magnitude is found to be $\mu_1= -0.94192$.

To solve the GPE \eqref{numexample2}, we apply again the generalized inverse iteration \eqref{basic-inverse-iterations} and use the condition  $|\lambda^{(n+1)} - \lambda^{(n)}| \le 10^{-10}$ as a stopping criterion.
Initializing the process again with a random initial value, the stopping criterion was fulfilled after $333$ iterations. The results are depicted in Figure \ref{rates_modelproblem2}.

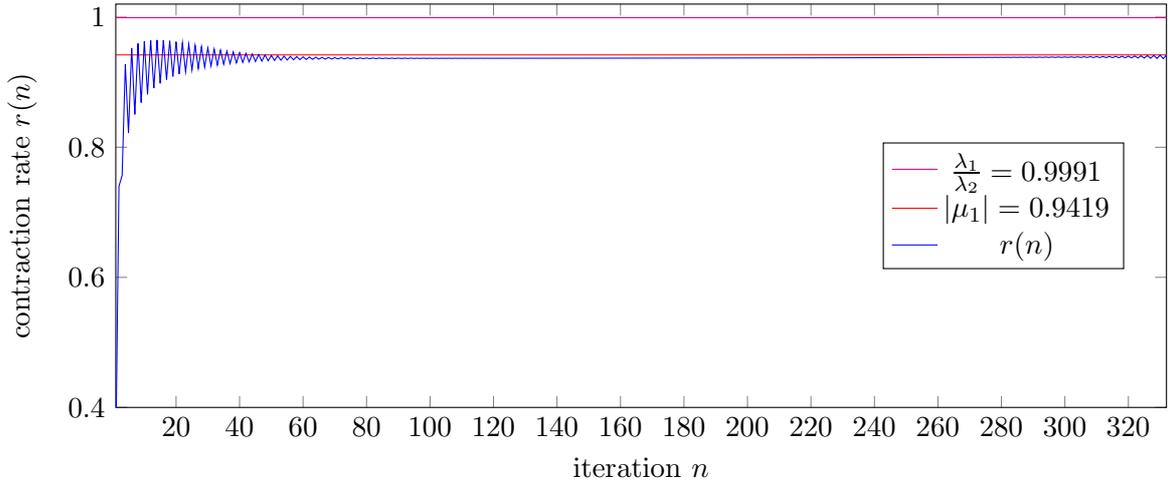
\begin{figure}
\begin{tikzpicture}
    \begin{axis}[
       legend style={at={(0.73,0.5)},anchor=west},
        height=0.45\textwidth,
        width=1.00\textwidth,
        xmax   = 332,  
        xmin   = 1, %
        ymax = 1.02, 
        ymin = 0.4,
        xlabel=iteration $n$,
        ylabel=contraction rate $r(n)$]
          \addplot[magenta] coordinates {
        ( 1 , 0.99909 ) 
( 2 , 0.99909 ) 
( 3 , 0.99909 ) 
( 4 , 0.99909 ) 
( 5 , 0.99909 ) 
( 6 , 0.99909 ) 
( 7 , 0.99909 ) 
( 8 , 0.99909 ) 
( 9 , 0.99909 ) 
( 10 , 0.99909 ) 
( 11 , 0.99909 ) 
( 12 , 0.99909 ) 
( 13 , 0.99909 ) 
( 14 , 0.99909 ) 
( 15 , 0.99909 ) 
( 16 , 0.99909 ) 
( 17 , 0.99909 ) 
( 18 , 0.99909 ) 
( 19 , 0.99909 ) 
( 20 , 0.99909 ) 
( 21 , 0.99909 ) 
( 22 , 0.99909 ) 
( 23 , 0.99909 ) 
( 24 , 0.99909 ) 
( 25 , 0.99909 ) 
( 26 , 0.99909 ) 
( 27 , 0.99909 ) 
( 28 , 0.99909 ) 
( 29 , 0.99909 ) 
( 30 , 0.99909 ) 
( 31 , 0.99909 ) 
( 32 , 0.99909 ) 
( 33 , 0.99909 ) 
( 34 , 0.99909 ) 
( 35 , 0.99909 ) 
( 36 , 0.99909 ) 
( 37 , 0.99909 ) 
( 38 , 0.99909 ) 
( 39 , 0.99909 ) 
( 40 , 0.99909 ) 
( 41 , 0.99909 ) 
( 42 , 0.99909 ) 
( 43 , 0.99909 ) 
( 44 , 0.99909 ) 
( 45 , 0.99909 ) 
( 46 , 0.99909 ) 
( 47 , 0.99909 ) 
( 48 , 0.99909 ) 
( 49 , 0.99909 ) 
( 50 , 0.99909 ) 
( 51 , 0.99909 ) 
( 52 , 0.99909 ) 
( 53 , 0.99909 ) 
( 54 , 0.99909 ) 
( 55 , 0.99909 ) 
( 56 , 0.99909 ) 
( 57 , 0.99909 ) 
( 58 , 0.99909 ) 
( 59 , 0.99909 ) 
( 60 , 0.99909 ) 
( 61 , 0.99909 ) 
( 62 , 0.99909 ) 
( 63 , 0.99909 ) 
( 64 , 0.99909 ) 
( 65 , 0.99909 ) 
( 66 , 0.99909 ) 
( 67 , 0.99909 ) 
( 68 , 0.99909 ) 
( 69 , 0.99909 ) 
( 70 , 0.99909 ) 
( 71 , 0.99909 ) 
( 72 , 0.99909 ) 
( 73 , 0.99909 ) 
( 74 , 0.99909 ) 
( 75 , 0.99909 ) 
( 76 , 0.99909 ) 
( 77 , 0.99909 ) 
( 78 , 0.99909 ) 
( 79 , 0.99909 ) 
( 80 , 0.99909 ) 
( 81 , 0.99909 ) 
( 82 , 0.99909 ) 
( 83 , 0.99909 ) 
( 84 , 0.99909 ) 
( 85 , 0.99909 ) 
( 86 , 0.99909 ) 
( 87 , 0.99909 ) 
( 88 , 0.99909 ) 
( 89 , 0.99909 ) 
( 90 , 0.99909 ) 
( 91 , 0.99909 ) 
( 92 , 0.99909 ) 
( 93 , 0.99909 ) 
( 94 , 0.99909 ) 
( 95 , 0.99909 ) 
( 96 , 0.99909 ) 
( 97 , 0.99909 ) 
( 98 , 0.99909 ) 
( 99 , 0.99909 ) 
( 100 , 0.99909 ) 
( 101 , 0.99909 ) 
( 102 , 0.99909 ) 
( 103 , 0.99909 ) 
( 104 , 0.99909 ) 
( 105 , 0.99909 ) 
( 106 , 0.99909 ) 
( 107 , 0.99909 ) 
( 108 , 0.99909 ) 
( 109 , 0.99909 ) 
( 110 , 0.99909 ) 
( 111 , 0.99909 ) 
( 112 , 0.99909 ) 
( 113 , 0.99909 ) 
( 114 , 0.99909 ) 
( 115 , 0.99909 ) 
( 116 , 0.99909 ) 
( 117 , 0.99909 ) 
( 118 , 0.99909 ) 
( 119 , 0.99909 ) 
( 120 , 0.99909 ) 
( 121 , 0.99909 ) 
( 122 , 0.99909 ) 
( 123 , 0.99909 ) 
( 124 , 0.99909 ) 
( 125 , 0.99909 ) 
( 126 , 0.99909 ) 
( 127 , 0.99909 ) 
( 128 , 0.99909 ) 
( 129 , 0.99909 ) 
( 130 , 0.99909 ) 
( 131 , 0.99909 ) 
( 132 , 0.99909 ) 
( 133 , 0.99909 ) 
( 134 , 0.99909 ) 
( 135 , 0.99909 ) 
( 136 , 0.99909 ) 
( 137 , 0.99909 ) 
( 138 , 0.99909 ) 
( 139 , 0.99909 ) 
( 140 , 0.99909 ) 
( 141 , 0.99909 ) 
( 142 , 0.99909 ) 
( 143 , 0.99909 ) 
( 144 , 0.99909 ) 
( 145 , 0.99909 ) 
( 146 , 0.99909 ) 
( 147 , 0.99909 ) 
( 148 , 0.99909 ) 
( 149 , 0.99909 ) 
( 150 , 0.99909 ) 
( 151 , 0.99909 ) 
( 152 , 0.99909 ) 
( 153 , 0.99909 ) 
( 154 , 0.99909 ) 
( 155 , 0.99909 ) 
( 156 , 0.99909 ) 
( 157 , 0.99909 ) 
( 158 , 0.99909 ) 
( 159 , 0.99909 ) 
( 160 , 0.99909 ) 
( 161 , 0.99909 ) 
( 162 , 0.99909 ) 
( 163 , 0.99909 ) 
( 164 , 0.99909 ) 
( 165 , 0.99909 ) 
( 166 , 0.99909 ) 
( 167 , 0.99909 ) 
( 168 , 0.99909 ) 
( 169 , 0.99909 ) 
( 170 , 0.99909 ) 
( 171 , 0.99909 ) 
( 172 , 0.99909 ) 
( 173 , 0.99909 ) 
( 174 , 0.99909 ) 
( 175 , 0.99909 ) 
( 176 , 0.99909 ) 
( 177 , 0.99909 ) 
( 178 , 0.99909 ) 
( 179 , 0.99909 ) 
( 180 , 0.99909 ) 
( 181 , 0.99909 ) 
( 182 , 0.99909 ) 
( 183 , 0.99909 ) 
( 184 , 0.99909 ) 
( 185 , 0.99909 ) 
( 186 , 0.99909 ) 
( 187 , 0.99909 ) 
( 188 , 0.99909 ) 
( 189 , 0.99909 ) 
( 190 , 0.99909 ) 
( 191 , 0.99909 ) 
( 192 , 0.99909 ) 
( 193 , 0.99909 ) 
( 194 , 0.99909 ) 
( 195 , 0.99909 ) 
( 196 , 0.99909 ) 
( 197 , 0.99909 ) 
( 198 , 0.99909 ) 
( 199 , 0.99909 ) 
( 200 , 0.99909 ) 
( 201 , 0.99909 ) 
( 202 , 0.99909 ) 
( 203 , 0.99909 ) 
( 204 , 0.99909 ) 
( 205 , 0.99909 ) 
( 206 , 0.99909 ) 
( 207 , 0.99909 ) 
( 208 , 0.99909 ) 
( 209 , 0.99909 ) 
( 210 , 0.99909 ) 
( 211 , 0.99909 ) 
( 212 , 0.99909 ) 
( 213 , 0.99909 ) 
( 214 , 0.99909 ) 
( 215 , 0.99909 ) 
( 216 , 0.99909 ) 
( 217 , 0.99909 ) 
( 218 , 0.99909 ) 
( 219 , 0.99909 ) 
( 220 , 0.99909 ) 
( 221 , 0.99909 ) 
( 222 , 0.99909 ) 
( 223 , 0.99909 ) 
( 224 , 0.99909 ) 
( 225 , 0.99909 ) 
( 226 , 0.99909 ) 
( 227 , 0.99909 ) 
( 228 , 0.99909 ) 
( 229 , 0.99909 ) 
( 230 , 0.99909 ) 
( 231 , 0.99909 ) 
( 232 , 0.99909 ) 
( 233 , 0.99909 ) 
( 234 , 0.99909 ) 
( 235 , 0.99909 ) 
( 236 , 0.99909 ) 
( 237 , 0.99909 ) 
( 238 , 0.99909 ) 
( 239 , 0.99909 ) 
( 240 , 0.99909 ) 
( 241 , 0.99909 ) 
( 242 , 0.99909 ) 
( 243 , 0.99909 ) 
( 244 , 0.99909 ) 
( 245 , 0.99909 ) 
( 246 , 0.99909 ) 
( 247 , 0.99909 ) 
( 248 , 0.99909 ) 
( 249 , 0.99909 ) 
( 250 , 0.99909 ) 
( 251 , 0.99909 ) 
( 252 , 0.99909 ) 
( 253 , 0.99909 ) 
( 254 , 0.99909 ) 
( 255 , 0.99909 ) 
( 256 , 0.99909 ) 
( 257 , 0.99909 ) 
( 258 , 0.99909 ) 
( 259 , 0.99909 ) 
( 260 , 0.99909 ) 
( 261 , 0.99909 ) 
( 262 , 0.99909 ) 
( 263 , 0.99909 ) 
( 264 , 0.99909 ) 
( 265 , 0.99909 ) 
( 266 , 0.99909 ) 
( 267 , 0.99909 ) 
( 268 , 0.99909 ) 
( 269 , 0.99909 ) 
( 270 , 0.99909 ) 
( 271 , 0.99909 ) 
( 272 , 0.99909 ) 
( 273 , 0.99909 ) 
( 274 , 0.99909 ) 
( 275 , 0.99909 ) 
( 276 , 0.99909 ) 
( 277 , 0.99909 ) 
( 278 , 0.99909 ) 
( 279 , 0.99909 ) 
( 280 , 0.99909 ) 
( 281 , 0.99909 ) 
( 282 , 0.99909 ) 
( 283 , 0.99909 ) 
( 284 , 0.99909 ) 
( 285 , 0.99909 ) 
( 286 , 0.99909 ) 
( 287 , 0.99909 ) 
( 288 , 0.99909 ) 
( 289 , 0.99909 ) 
( 290 , 0.99909 ) 
( 291 , 0.99909 ) 
( 292 , 0.99909 ) 
( 293 , 0.99909 ) 
( 294 , 0.99909 ) 
( 295 , 0.99909 ) 
( 296 , 0.99909 ) 
( 297 , 0.99909 ) 
( 298 , 0.99909 ) 
( 299 , 0.99909 ) 
( 300 , 0.99909 ) 
( 301 , 0.99909 ) 
( 302 , 0.99909 ) 
( 303 , 0.99909 ) 
( 304 , 0.99909 ) 
( 305 , 0.99909 ) 
( 306 , 0.99909 ) 
( 307 , 0.99909 ) 
( 308 , 0.99909 ) 
( 309 , 0.99909 ) 
( 310 , 0.99909 ) 
( 311 , 0.99909 ) 
( 312 , 0.99909 ) 
( 313 , 0.99909 ) 
( 314 , 0.99909 ) 
( 315 , 0.99909 ) 
( 316 , 0.99909 ) 
( 317 , 0.99909 ) 
( 318 , 0.99909 ) 
( 319 , 0.99909 ) 
( 320 , 0.99909 ) 
( 321 , 0.99909 ) 
( 322 , 0.99909 ) 
( 323 , 0.99909 ) 
( 324 , 0.99909 ) 
( 325 , 0.99909 ) 
( 326 , 0.99909 ) 
( 327 , 0.99909 ) 
( 328 , 0.99909 ) 
( 329 , 0.99909 ) 
( 330 , 0.99909 ) 
( 331 , 0.99909 ) 
( 332 , 0.99909)};
          \addplot[red] coordinates {
        ( 1 , 0.94193 ) 
( 2 , 0.94193 ) 
( 3 , 0.94193 ) 
( 4 , 0.94193 ) 
( 5 , 0.94193 ) 
( 6 , 0.94193 ) 
( 7 , 0.94193 ) 
( 8 , 0.94193 ) 
( 9 , 0.94193 ) 
( 10 , 0.94193 ) 
( 11 , 0.94193 ) 
( 12 , 0.94193 ) 
( 13 , 0.94193 ) 
( 14 , 0.94193 ) 
( 15 , 0.94193 ) 
( 16 , 0.94193 ) 
( 17 , 0.94193 ) 
( 18 , 0.94193 ) 
( 19 , 0.94193 ) 
( 20 , 0.94193 ) 
( 21 , 0.94193 ) 
( 22 , 0.94193 ) 
( 23 , 0.94193 ) 
( 24 , 0.94193 ) 
( 25 , 0.94193 ) 
( 26 , 0.94193 ) 
( 27 , 0.94193 ) 
( 28 , 0.94193 ) 
( 29 , 0.94193 ) 
( 30 , 0.94193 ) 
( 31 , 0.94193 ) 
( 32 , 0.94193 ) 
( 33 , 0.94193 ) 
( 34 , 0.94193 ) 
( 35 , 0.94193 ) 
( 36 , 0.94193 ) 
( 37 , 0.94193 ) 
( 38 , 0.94193 ) 
( 39 , 0.94193 ) 
( 40 , 0.94193 ) 
( 41 , 0.94193 ) 
( 42 , 0.94193 ) 
( 43 , 0.94193 ) 
( 44 , 0.94193 ) 
( 45 , 0.94193 ) 
( 46 , 0.94193 ) 
( 47 , 0.94193 ) 
( 48 , 0.94193 ) 
( 49 , 0.94193 ) 
( 50 , 0.94193 ) 
( 51 , 0.94193 ) 
( 52 , 0.94193 ) 
( 53 , 0.94193 ) 
( 54 , 0.94193 ) 
( 55 , 0.94193 ) 
( 56 , 0.94193 ) 
( 57 , 0.94193 ) 
( 58 , 0.94193 ) 
( 59 , 0.94193 ) 
( 60 , 0.94193 ) 
( 61 , 0.94193 ) 
( 62 , 0.94193 ) 
( 63 , 0.94193 ) 
( 64 , 0.94193 ) 
( 65 , 0.94193 ) 
( 66 , 0.94193 ) 
( 67 , 0.94193 ) 
( 68 , 0.94193 ) 
( 69 , 0.94193 ) 
( 70 , 0.94193 ) 
( 71 , 0.94193 ) 
( 72 , 0.94193 ) 
( 73 , 0.94193 ) 
( 74 , 0.94193 ) 
( 75 , 0.94193 ) 
( 76 , 0.94193 ) 
( 77 , 0.94193 ) 
( 78 , 0.94193 ) 
( 79 , 0.94193 ) 
( 80 , 0.94193 ) 
( 81 , 0.94193 ) 
( 82 , 0.94193 ) 
( 83 , 0.94193 ) 
( 84 , 0.94193 ) 
( 85 , 0.94193 ) 
( 86 , 0.94193 ) 
( 87 , 0.94193 ) 
( 88 , 0.94193 ) 
( 89 , 0.94193 ) 
( 90 , 0.94193 ) 
( 91 , 0.94193 ) 
( 92 , 0.94193 ) 
( 93 , 0.94193 ) 
( 94 , 0.94193 ) 
( 95 , 0.94193 ) 
( 96 , 0.94193 ) 
( 97 , 0.94193 ) 
( 98 , 0.94193 ) 
( 99 , 0.94193 ) 
( 100 , 0.94193 ) 
( 101 , 0.94193 ) 
( 102 , 0.94193 ) 
( 103 , 0.94193 ) 
( 104 , 0.94193 ) 
( 105 , 0.94193 ) 
( 106 , 0.94193 ) 
( 107 , 0.94193 ) 
( 108 , 0.94193 ) 
( 109 , 0.94193 ) 
( 110 , 0.94193 ) 
( 111 , 0.94193 ) 
( 112 , 0.94193 ) 
( 113 , 0.94193 ) 
( 114 , 0.94193 ) 
( 115 , 0.94193 ) 
( 116 , 0.94193 ) 
( 117 , 0.94193 ) 
( 118 , 0.94193 ) 
( 119 , 0.94193 ) 
( 120 , 0.94193 ) 
( 121 , 0.94193 ) 
( 122 , 0.94193 ) 
( 123 , 0.94193 ) 
( 124 , 0.94193 ) 
( 125 , 0.94193 ) 
( 126 , 0.94193 ) 
( 127 , 0.94193 ) 
( 128 , 0.94193 ) 
( 129 , 0.94193 ) 
( 130 , 0.94193 ) 
( 131 , 0.94193 ) 
( 132 , 0.94193 ) 
( 133 , 0.94193 ) 
( 134 , 0.94193 ) 
( 135 , 0.94193 ) 
( 136 , 0.94193 ) 
( 137 , 0.94193 ) 
( 138 , 0.94193 ) 
( 139 , 0.94193 ) 
( 140 , 0.94193 ) 
( 141 , 0.94193 ) 
( 142 , 0.94193 ) 
( 143 , 0.94193 ) 
( 144 , 0.94193 ) 
( 145 , 0.94193 ) 
( 146 , 0.94193 ) 
( 147 , 0.94193 ) 
( 148 , 0.94193 ) 
( 149 , 0.94193 ) 
( 150 , 0.94193 ) 
( 151 , 0.94193 ) 
( 152 , 0.94193 ) 
( 153 , 0.94193 ) 
( 154 , 0.94193 ) 
( 155 , 0.94193 ) 
( 156 , 0.94193 ) 
( 157 , 0.94193 ) 
( 158 , 0.94193 ) 
( 159 , 0.94193 ) 
( 160 , 0.94193 ) 
( 161 , 0.94193 ) 
( 162 , 0.94193 ) 
( 163 , 0.94193 ) 
( 164 , 0.94193 ) 
( 165 , 0.94193 ) 
( 166 , 0.94193 ) 
( 167 , 0.94193 ) 
( 168 , 0.94193 ) 
( 169 , 0.94193 ) 
( 170 , 0.94193 ) 
( 171 , 0.94193 ) 
( 172 , 0.94193 ) 
( 173 , 0.94193 ) 
( 174 , 0.94193 ) 
( 175 , 0.94193 ) 
( 176 , 0.94193 ) 
( 177 , 0.94193 ) 
( 178 , 0.94193 ) 
( 179 , 0.94193 ) 
( 180 , 0.94193 ) 
( 181 , 0.94193 ) 
( 182 , 0.94193 ) 
( 183 , 0.94193 ) 
( 184 , 0.94193 ) 
( 185 , 0.94193 ) 
( 186 , 0.94193 ) 
( 187 , 0.94193 ) 
( 188 , 0.94193 ) 
( 189 , 0.94193 ) 
( 190 , 0.94193 ) 
( 191 , 0.94193 ) 
( 192 , 0.94193 ) 
( 193 , 0.94193 ) 
( 194 , 0.94193 ) 
( 195 , 0.94193 ) 
( 196 , 0.94193 ) 
( 197 , 0.94193 ) 
( 198 , 0.94193 ) 
( 199 , 0.94193 ) 
( 200 , 0.94193 ) 
( 201 , 0.94193 ) 
( 202 , 0.94193 ) 
( 203 , 0.94193 ) 
( 204 , 0.94193 ) 
( 205 , 0.94193 ) 
( 206 , 0.94193 ) 
( 207 , 0.94193 ) 
( 208 , 0.94193 ) 
( 209 , 0.94193 ) 
( 210 , 0.94193 ) 
( 211 , 0.94193 ) 
( 212 , 0.94193 ) 
( 213 , 0.94193 ) 
( 214 , 0.94193 ) 
( 215 , 0.94193 ) 
( 216 , 0.94193 ) 
( 217 , 0.94193 ) 
( 218 , 0.94193 ) 
( 219 , 0.94193 ) 
( 220 , 0.94193 ) 
( 221 , 0.94193 ) 
( 222 , 0.94193 ) 
( 223 , 0.94193 ) 
( 224 , 0.94193 ) 
( 225 , 0.94193 ) 
( 226 , 0.94193 ) 
( 227 , 0.94193 ) 
( 228 , 0.94193 ) 
( 229 , 0.94193 ) 
( 230 , 0.94193 ) 
( 231 , 0.94193 ) 
( 232 , 0.94193 ) 
( 233 , 0.94193 ) 
( 234 , 0.94193 ) 
( 235 , 0.94193 ) 
( 236 , 0.94193 ) 
( 237 , 0.94193 ) 
( 238 , 0.94193 ) 
( 239 , 0.94193 ) 
( 240 , 0.94193 ) 
( 241 , 0.94193 ) 
( 242 , 0.94193 ) 
( 243 , 0.94193 ) 
( 244 , 0.94193 ) 
( 245 , 0.94193 ) 
( 246 , 0.94193 ) 
( 247 , 0.94193 ) 
( 248 , 0.94193 ) 
( 249 , 0.94193 ) 
( 250 , 0.94193 ) 
( 251 , 0.94193 ) 
( 252 , 0.94193 ) 
( 253 , 0.94193 ) 
( 254 , 0.94193 ) 
( 255 , 0.94193 ) 
( 256 , 0.94193 ) 
( 257 , 0.94193 ) 
( 258 , 0.94193 ) 
( 259 , 0.94193 ) 
( 260 , 0.94193 ) 
( 261 , 0.94193 ) 
( 262 , 0.94193 ) 
( 263 , 0.94193 ) 
( 264 , 0.94193 ) 
( 265 , 0.94193 ) 
( 266 , 0.94193 ) 
( 267 , 0.94193 ) 
( 268 , 0.94193 ) 
( 269 , 0.94193 ) 
( 270 , 0.94193 ) 
( 271 , 0.94193 ) 
( 272 , 0.94193 ) 
( 273 , 0.94193 ) 
( 274 , 0.94193 ) 
( 275 , 0.94193 ) 
( 276 , 0.94193 ) 
( 277 , 0.94193 ) 
( 278 , 0.94193 ) 
( 279 , 0.94193 ) 
( 280 , 0.94193 ) 
( 281 , 0.94193 ) 
( 282 , 0.94193 ) 
( 283 , 0.94193 ) 
( 284 , 0.94193 ) 
( 285 , 0.94193 ) 
( 286 , 0.94193 ) 
( 287 , 0.94193 ) 
( 288 , 0.94193 ) 
( 289 , 0.94193 ) 
( 290 , 0.94193 ) 
( 291 , 0.94193 ) 
( 292 , 0.94193 ) 
( 293 , 0.94193 ) 
( 294 , 0.94193 ) 
( 295 , 0.94193 ) 
( 296 , 0.94193 ) 
( 297 , 0.94193 ) 
( 298 , 0.94193 ) 
( 299 , 0.94193 ) 
( 300 , 0.94193 ) 
( 301 , 0.94193 ) 
( 302 , 0.94193 ) 
( 303 , 0.94193 ) 
( 304 , 0.94193 ) 
( 305 , 0.94193 ) 
( 306 , 0.94193 ) 
( 307 , 0.94193 ) 
( 308 , 0.94193 ) 
( 309 , 0.94193 ) 
( 310 , 0.94193 ) 
( 311 , 0.94193 ) 
( 312 , 0.94193 ) 
( 313 , 0.94193 ) 
( 314 , 0.94193 ) 
( 315 , 0.94193 ) 
( 316 , 0.94193 ) 
( 317 , 0.94193 ) 
( 318 , 0.94193 ) 
( 319 , 0.94193 ) 
( 320 , 0.94193 ) 
( 321 , 0.94193 ) 
( 322 , 0.94193 ) 
( 323 , 0.94193 ) 
( 324 , 0.94193 ) 
( 325 , 0.94193 ) 
( 326 , 0.94193 ) 
( 327 , 0.94193 ) 
( 328 , 0.94193 ) 
( 329 , 0.94193 ) 
( 330 , 0.94193 ) 
( 331 , 0.94193 ) 
( 332 , 0.94193 )};
         \addplot[blue] coordinates {
( 1 , 0.32696 ) 
( 2 , 0.74026 ) 
( 3 , 0.75677 ) 
( 4 , 0.92741 ) 
( 5 , 0.82187 ) 
( 6 , 0.95248 ) 
( 7 , 0.85046 ) 
( 8 , 0.95946 ) 
( 9 , 0.86863 ) 
( 10 , 0.96258 ) 
( 11 , 0.88140 ) 
( 12 , 0.96420 ) 
( 13 , 0.89088 ) 
( 14 , 0.96479 ) 
( 15 , 0.89820 ) 
( 16 , 0.96453 ) 
( 17 , 0.90403 ) 
( 18 , 0.96358 ) 
( 19 , 0.90877 ) 
( 20 , 0.96213 ) 
( 21 , 0.91267 ) 
( 22 , 0.96036 ) 
( 23 , 0.91593 ) 
( 24 , 0.95842 ) 
( 25 , 0.91868 ) 
( 26 , 0.95644 ) 
( 27 , 0.92101 ) 
( 28 , 0.95449 ) 
( 29 , 0.92300 ) 
( 30 , 0.95263 ) 
( 31 , 0.92472 ) 
( 32 , 0.95089 ) 
( 33 , 0.92620 ) 
( 34 , 0.94928 ) 
( 35 , 0.92748 ) 
( 36 , 0.94783 ) 
( 37 , 0.92860 ) 
( 38 , 0.94651 ) 
( 39 , 0.92957 ) 
( 40 , 0.94533 ) 
( 41 , 0.93042 ) 
( 42 , 0.94428 ) 
( 43 , 0.93117 ) 
( 44 , 0.94335 ) 
( 45 , 0.93183 ) 
( 46 , 0.94252 ) 
( 47 , 0.93240 ) 
( 48 , 0.94179 ) 
( 49 , 0.93291 ) 
( 50 , 0.94115 ) 
( 51 , 0.93335 ) 
( 52 , 0.94059 ) 
( 53 , 0.93374 ) 
( 54 , 0.94009 ) 
( 55 , 0.93409 ) 
( 56 , 0.93966 ) 
( 57 , 0.93439 ) 
( 58 , 0.93928 ) 
( 59 , 0.93466 ) 
( 60 , 0.93895 ) 
( 61 , 0.93490 ) 
( 62 , 0.93866 ) 
( 63 , 0.93511 ) 
( 64 , 0.93841 ) 
( 65 , 0.93529 ) 
( 66 , 0.93818 ) 
( 67 , 0.93546 ) 
( 68 , 0.93799 ) 
( 69 , 0.93560 ) 
( 70 , 0.93782 ) 
( 71 , 0.93573 ) 
( 72 , 0.93768 ) 
( 73 , 0.93584 ) 
( 74 , 0.93755 ) 
( 75 , 0.93594 ) 
( 76 , 0.93744 ) 
( 77 , 0.93603 ) 
( 78 , 0.93734 ) 
( 79 , 0.93611 ) 
( 80 , 0.93726 ) 
( 81 , 0.93618 ) 
( 82 , 0.93719 ) 
( 83 , 0.93624 ) 
( 84 , 0.93713 ) 
( 85 , 0.93630 ) 
( 86 , 0.93707 ) 
( 87 , 0.93635 ) 
( 88 , 0.93703 ) 
( 89 , 0.93639 ) 
( 90 , 0.93699 ) 
( 91 , 0.93643 ) 
( 92 , 0.93695 ) 
( 93 , 0.93647 ) 
( 94 , 0.93693 ) 
( 95 , 0.93650 ) 
( 96 , 0.93690 ) 
( 97 , 0.93653 ) 
( 98 , 0.93688 ) 
( 99 , 0.93655 ) 
( 100 , 0.93686 ) 
( 101 , 0.93658 ) 
( 102 , 0.93685 ) 
( 103 , 0.93660 ) 
( 104 , 0.93684 ) 
( 105 , 0.93662 ) 
( 106 , 0.93683 ) 
( 107 , 0.93664 ) 
( 108 , 0.93682 ) 
( 109 , 0.93666 ) 
( 110 , 0.93682 ) 
( 111 , 0.93667 ) 
( 112 , 0.93682 ) 
( 113 , 0.93669 ) 
( 114 , 0.93682 ) 
( 115 , 0.93670 ) 
( 116 , 0.93682 ) 
( 117 , 0.93672 ) 
( 118 , 0.93682 ) 
( 119 , 0.93673 ) 
( 120 , 0.93682 ) 
( 121 , 0.93674 ) 
( 122 , 0.93682 ) 
( 123 , 0.93676 ) 
( 124 , 0.93682 ) 
( 125 , 0.93677 ) 
( 126 , 0.93683 ) 
( 127 , 0.93678 ) 
( 128 , 0.93683 ) 
( 129 , 0.93679 ) 
( 130 , 0.93684 ) 
( 131 , 0.93680 ) 
( 132 , 0.93685 ) 
( 133 , 0.93681 ) 
( 134 , 0.93685 ) 
( 135 , 0.93683 ) 
( 136 , 0.93686 ) 
( 137 , 0.93684 ) 
( 138 , 0.93687 ) 
( 139 , 0.93685 ) 
( 140 , 0.93687 ) 
( 141 , 0.93686 ) 
( 142 , 0.93688 ) 
( 143 , 0.93687 ) 
( 144 , 0.93689 ) 
( 145 , 0.93688 ) 
( 146 , 0.93690 ) 
( 147 , 0.93689 ) 
( 148 , 0.93691 ) 
( 149 , 0.93690 ) 
( 150 , 0.93692 ) 
( 151 , 0.93691 ) 
( 152 , 0.93693 ) 
( 153 , 0.93693 ) 
( 154 , 0.93694 ) 
( 155 , 0.93694 ) 
( 156 , 0.93695 ) 
( 157 , 0.93695 ) 
( 158 , 0.93696 ) 
( 159 , 0.93696 ) 
( 160 , 0.93697 ) 
( 161 , 0.93697 ) 
( 162 , 0.93699 ) 
( 163 , 0.93699 ) 
( 164 , 0.93700 ) 
( 165 , 0.93700 ) 
( 166 , 0.93701 ) 
( 167 , 0.93701 ) 
( 168 , 0.93702 ) 
( 169 , 0.93702 ) 
( 170 , 0.93703 ) 
( 171 , 0.93704 ) 
( 172 , 0.93705 ) 
( 173 , 0.93705 ) 
( 174 , 0.93706 ) 
( 175 , 0.93706 ) 
( 176 , 0.93707 ) 
( 177 , 0.93708 ) 
( 178 , 0.93709 ) 
( 179 , 0.93709 ) 
( 180 , 0.93710 ) 
( 181 , 0.93711 ) 
( 182 , 0.93712 ) 
( 183 , 0.93712 ) 
( 184 , 0.93713 ) 
( 185 , 0.93714 ) 
( 186 , 0.93714 ) 
( 187 , 0.93715 ) 
( 188 , 0.93716 ) 
( 189 , 0.93717 ) 
( 190 , 0.93717 ) 
( 191 , 0.93718 ) 
( 192 , 0.93719 ) 
( 193 , 0.93720 ) 
( 194 , 0.93721 ) 
( 195 , 0.93721 ) 
( 196 , 0.93722 ) 
( 197 , 0.93723 ) 
( 198 , 0.93724 ) 
( 199 , 0.93725 ) 
( 200 , 0.93725 ) 
( 201 , 0.93726 ) 
( 202 , 0.93727 ) 
( 203 , 0.93728 ) 
( 204 , 0.93729 ) 
( 205 , 0.93730 ) 
( 206 , 0.93731 ) 
( 207 , 0.93731 ) 
( 208 , 0.93732 ) 
( 209 , 0.93733 ) 
( 210 , 0.93734 ) 
( 211 , 0.93735 ) 
( 212 , 0.93736 ) 
( 213 , 0.93737 ) 
( 214 , 0.93738 ) 
( 215 , 0.93739 ) 
( 216 , 0.93740 ) 
( 217 , 0.93740 ) 
( 218 , 0.93742 ) 
( 219 , 0.93742 ) 
( 220 , 0.93744 ) 
( 221 , 0.93744 ) 
( 222 , 0.93746 ) 
( 223 , 0.93746 ) 
( 224 , 0.93748 ) 
( 225 , 0.93748 ) 
( 226 , 0.93750 ) 
( 227 , 0.93750 ) 
( 228 , 0.93752 ) 
( 229 , 0.93752 ) 
( 230 , 0.93754 ) 
( 231 , 0.93754 ) 
( 232 , 0.93756 ) 
( 233 , 0.93756 ) 
( 234 , 0.93758 ) 
( 235 , 0.93759 ) 
( 236 , 0.93761 ) 
( 237 , 0.93761 ) 
( 238 , 0.93763 ) 
( 239 , 0.93763 ) 
( 240 , 0.93765 ) 
( 241 , 0.93765 ) 
( 242 , 0.93768 ) 
( 243 , 0.93767 ) 
( 244 , 0.93770 ) 
( 245 , 0.93769 ) 
( 246 , 0.93773 ) 
( 247 , 0.93771 ) 
( 248 , 0.93775 ) 
( 249 , 0.93774 ) 
( 250 , 0.93778 ) 
( 251 , 0.93776 ) 
( 252 , 0.93780 ) 
( 253 , 0.93778 ) 
( 254 , 0.93783 ) 
( 255 , 0.93780 ) 
( 256 , 0.93786 ) 
( 257 , 0.93782 ) 
( 258 , 0.93788 ) 
( 259 , 0.93785 ) 
( 260 , 0.93791 ) 
( 261 , 0.93787 ) 
( 262 , 0.93794 ) 
( 263 , 0.93789 ) 
( 264 , 0.93797 ) 
( 265 , 0.93791 ) 
( 266 , 0.93800 ) 
( 267 , 0.93793 ) 
( 268 , 0.93803 ) 
( 269 , 0.93795 ) 
( 270 , 0.93807 ) 
( 271 , 0.93797 ) 
( 272 , 0.93810 ) 
( 273 , 0.93799 ) 
( 274 , 0.93814 ) 
( 275 , 0.93801 ) 
( 276 , 0.93817 ) 
( 277 , 0.93803 ) 
( 278 , 0.93821 ) 
( 279 , 0.93804 ) 
( 280 , 0.93825 ) 
( 281 , 0.93806 ) 
( 282 , 0.93829 ) 
( 283 , 0.93807 ) 
( 284 , 0.93834 ) 
( 285 , 0.93808 ) 
( 286 , 0.93839 ) 
( 287 , 0.93809 ) 
( 288 , 0.93843 ) 
( 289 , 0.93810 ) 
( 290 , 0.93849 ) 
( 291 , 0.93810 ) 
( 292 , 0.93854 ) 
( 293 , 0.93810 ) 
( 294 , 0.93860 ) 
( 295 , 0.93810 ) 
( 296 , 0.93867 ) 
( 297 , 0.93809 ) 
( 298 , 0.93874 ) 
( 299 , 0.93808 ) 
( 300 , 0.93881 ) 
( 301 , 0.93806 ) 
( 302 , 0.93889 ) 
( 303 , 0.93804 ) 
( 304 , 0.93898 ) 
( 305 , 0.93801 ) 
( 306 , 0.93908 ) 
( 307 , 0.93797 ) 
( 308 , 0.93918 ) 
( 309 , 0.93792 ) 
( 310 , 0.93930 ) 
( 311 , 0.93786 ) 
( 312 , 0.93943 ) 
( 313 , 0.93779 ) 
( 314 , 0.93957 ) 
( 315 , 0.93770 ) 
( 316 , 0.93972 ) 
( 317 , 0.93760 ) 
( 318 , 0.93990 ) 
( 319 , 0.93748 ) 
( 320 , 0.94009 ) 
( 321 , 0.93734 ) 
( 322 , 0.94031 ) 
( 323 , 0.93717 ) 
( 324 , 0.94055 ) 
( 325 , 0.93699 ) 
( 326 , 0.94082 ) 
( 327 , 0.93677 ) 
( 328 , 0.94112 ) 
( 329 , 0.93651 ) 
( 330 , 0.94146 ) 
( 331 , 0.93622 ) 
( 332 , 0.94183 ) };
        \legend{%
$\tfrac{\lambda_1}{\lambda_2} = 0.9991$,
$|\mu_1| = 0.9419$,
$r(n)$,
}
    \end{axis}
\end{tikzpicture}
\caption{{\it Contraction rates for model problem 2. The numerical contraction rates $r(n)$ at iteration $n$ are computed as in \eqref{contraction-rate-H1}. The upper bounds for the rates are given by $\tfrac{\lambda_1}{\lambda_2}$ according to Theorem \ref{thm_conv_basic_inverse_iteration} and by $|\mu_1|$ according to Remark \ref{remark-sharper-rates}.}}
\label{rates_modelproblem2}
\end{figure}

Again, the rate $\frac{\lambda_1}{\lambda_2}$ yields an overshoot and is not sharp for large values of $\beta$, whereas $|\mu_1|$ describes the asymptotic convergence well. In the last iteration, we obtain the contraction rate $r(n)=0.94183$, which is again remarkably close to $|\mu_1|= 0.94192$. Note however that the values for $r(n)$ in the final iterations show an oscillatory behavior, where the values for $r(n)$ are roughly between $0.937$ and $0.9418$. There could be various reasons for this. 
The most simple explanation are rounding errors in our computations which could lead to a tiny offset of $r(n)$. This is not surprising when considering that the iterates $u^n$ are getting close to the numerically computed reference solution in the regime where the oscillations are observed.

However, it is also possible that the asymptotic oscillations are indeed not an artifact, but that there is a mathematical reason for it. This is related to the observation that the differential operator $ v\mapsto \mathcal{G}_{u} ( \hspace{1pt} (1 - 2 \tfrac{\beta}{\lambda} \, |u|^2) v \hspace{1pt} ) $, that describes the weighted eigenvalue problem \eqref{weighted-evp}, is not self-adjoint (neither with respect to the $L^2$- nor the $H^1$-inner product), hence the spectral radius is not an induced operator norm. To understand why this is relevant, we need to have a look at the proof of the Ostrowski theorem (cf. \cite{AHP21NumMath,Shi81}). Here it is exploited that for any $\eps>0$ there exists a norm $\| \cdot \|_{\eps}$ (that is equivalent to the $H^1$-norm) such that $|||  \phi^{\prime}(u) |||_{\eps}:= \sup\limits_{ \| v \|_{\eps}=1} \langle \phi^{\prime}(u), v \rangle \le |\mu_1| + \eps$. Consequently, in a sufficiently small $S_{\eps}$ neighborhood of $u$ it holds for all iterates $u^n$ that $\| u - u^n \|_{\eps} = \| \phi(u) - \phi(u^{n-1}) \|_{\eps} \le (|\mu_1| + 2\eps ) \| u - u^{n-1} \|_{\eps} \le ... \le (|\mu_1| + 2\eps )^n \| u - u^{0} \|_{\eps}$. In other words, the contraction rate $r_{\eps}(n)\approx |\mu_1|$ {\it per iteration} is only guaranteed in the (unknown) $\| \cdot \|_{\eps}$-norm, whereas the error in the $H^1$-norm is also influenced by the unknown norm equivalence constants. This does not matter asymptotically, because (by exploiting the norm equivalence for $\| u - u^n \|_{\eps}$ and $\| u - u^{0} \|_{\eps}$) we still obtain $\| u - u^n \|_{H^1(\Omega)} \le C_{\eps} (|\mu_1| + 2\eps )^n \| u - u^{0} \|_{H^1(\Omega)}$, where the influence of $C_{\eps}$ is vanishing for $n\rightarrow \infty$ and we have an error of order $ (|\mu_1| + 2\eps )^n$ after $n$ iterations. However, this is different when looking at two successive iterations, for which we only obtain the estimate $\| u - u^n \|_{H^1(\Omega)} \le C_{\eps} (|\mu_1| + 2\eps ) \| u - u^{n-1} \|_{H^1(\Omega)}$, which does formally not even guarantee a reduction of the error from one iterate to the next if $C_{\eps} (|\mu_1| + 2\eps )$ is too large. Hence, $C_{\eps}$ can potentially have an influence on how the error changes in individual iterations. In summary, the numerically observed rates $r(n)$ can be slightly polluted by norm equivalence constants. However, over several iterations the influence of these constants has to cancel out, hence leading to a slightly oscillatory behaviour of $r(n)$ in the asymptotic phase, as observed in our experiment.

In conclusion we can say however that our numerically computed value for $|\mu_1|$ yields an accurate upper bound for the observed contraction rates, confirming our main results regarding the estimated convergence speeds for the generalized inverse iteration \eqref{basic-inverse-iterations}.

\def\cprime{$'$}

\end{document}